\documentclass{amsproc}

\usepackage{amsmath}
\usepackage{amssymb}
\usepackage[mathcal]{eucal}
\usepackage[bb=pazo]{mathalfa}
\usepackage{rotating}
\usepackage{graphicx}
\usepackage[tableposition=below]{caption}
\usepackage{pigpen}
\usepackage{multirow}

\DeclareGraphicsExtensions{.png,.pdf,.eps}
\usepackage[all,2cell,cmtip]{xy}
\usepackage{tikz}
\usepackage{color}
\usepackage{longtable}
\usepackage{soul}
\setlength\LTleft{-0.2cm}
\setlength\LTright{-0.2cm}

\newtheorem{theorem}{Theorem}[section]
\newtheorem*{theorem*}{Theorem}
\newtheorem*{corollary*}{Corollary}
\newtheorem{lemma}[theorem]{Lemma}
\newtheorem{corollary}[theorem]{Corollary}
\newtheorem{proposition}[theorem]{Proposition}

\theoremstyle{definition}

\newtheorem{definition}[theorem]{Definition}

\newtheorem{remark}[theorem]{Remark}

\newtheorem{setup}[theorem]{Setup}
\newtheorem{example}[theorem]{Example}

%\numberwithin{equation}{section}

\def\C{{\mathbb C}}

\def\P{{\mathbb P}}

\def\R{{\mathbb R}}
\def\Z{{\mathbb Z}}

\def\cD{{\mathcal D}}
\def\cE{{\mathcal E}}
\def\cF{{\mathcal F}}
\def\cG{{\mathcal{G}}}

\def\cM{{\mathcal M}}

\def\cO{{\mathcal{O}}}

\def\cU{{\mathcal U}}
\def\cV{{\mathcal V}}
\def\cW{{\mathcal W}}

\def\fn{{\mathfrak n}}
\def\fg{{\mathfrak g}}
\def\fh{{\mathfrak h}}

\def\GL{\operatorname{Gl}}
\def\PGL{\operatorname{PGl}}

\def\DA{{\rm A}}
\def\DB{{\rm B}}
\def\DC{{\rm C}}
\def\DD{{\rm D}}
\def\DE{{\rm E}}
\def\DF{{\rm F}}
\def\DG{{\rm G}}

\def\lra{\longrightarrow}

\def\ra{\rightarrow}
\def\lra{\longrightarrow}
\def\rat{\dashrightarrow}

\def\Mo{\operatorname{\hspace{0cm}M}}
\def\Na{\operatorname{\hspace{0cm}N}}

\def\operatorname#1{\mathop{\rm #1}\nolimits}

\def\Aut{\operatorname{Aut}}

\def\Chow{\operatorname{Chow}}

\def\Hom{\operatorname{Hom}}

\def\Pic{\operatorname{Pic}}

\def\rank{\operatorname{rank}}
\def\rk{\operatorname{rk}}

\def\rat{\operatorname{RatCurves}}
\def\cdim{\operatorname{cdim}}

\def\NE{{\operatorname{NE}}}

\def\Nu{{\operatorname{N_1}}}

\def\Ad{\operatorname{Ad}}

\def\Ht{\operatorname{ht}}

\newcommand{\ol}[1]{\overline{#1}}
%%
% Cartesian square
%%
\newcommand{\pb}{\ar@{}[dr]|(.50){\text{\pigpenfont J}}}
%%

%%%%%%%%%%%%%%%
% customizable widetilde
%%
%\newcommand{\tl}[1]{\widetilde{#1}}
\makeatletter
 
\newcommand*\wthelper[2]{%
        \hbox{\dimen@\accentfontxheight#1%
                \accentfontxheight#11.15\dimen@
                $\m@th#1\widetilde{#2}$%
                \accentfontxheight#1\dimen@
        }%
}

\newcommand*\accentfontxheight[1]{%
        \fontdimen5\ifx#1\displaystyle
                \textfont
        \else\ifx#1\textstyle
                \textfont
        \else\ifx#1\scriptstyle
                \scriptfont
        \else
                \scriptscriptfont
        \fi\fi\fi3
}
\makeatother
%%%%%%%%%

\newcommand{\shse}[3]{0 ~\ra ~#1~ \lra ~#2~ \lra ~#3~ \ra~ 0}

%    Absolute value notation

%    Blank box placeholder for figures (to avoid requiring any
%    particular graphics capabilities for printing this document).

\makeindex

\begin{document}
%\pagewiselinenumbers

\title[On uniform flag bundles on Fano manifolds]{On uniform flag bundles on Fano manifolds}

\author[R. Mu\~noz]{Roberto Mu\~noz}

\address{Departamento de Matem\'atica Aplicada, ESCET, Universidad
Rey Juan Carlos, 28933-M\'ostoles, Madrid, Spain}
\email{roberto.munoz@urjc.es}
\thanks{First and third author partially supported by the spanish government project MTM2015-65968-P. Second author supported by PRIN project ``Geometria delle variet\`a algebriche''. Second and third author supported by the Department of Mathematics of the University of Trento.}

\author[G. Occhetta]{Gianluca Occhetta}
\address{Dipartimento di Matematica, Universit\`a di Trento, via
Sommarive 14 I-38123 Povo di Trento (TN), Italy} 
\email{gianluca.occhetta@unitn.it}

\author[L.E. Sol\'a Conde]{Luis E. Sol\'a Conde}
\address{Dipartimento di Matematica, Universit\`a di Trento, via
Sommarive 14 I-38123 Povo di Trento (TN), Italy}
\email{lesolac@gmail.com}
%%
%%%    General info
\subjclass[2010]{Primary 14J45; Secondary 14E30, 14M15, 14M17}

\begin{abstract}
As a natural extension of the theory of uniform vector bundles on Fano manifolds, we consider  uniform principal bundles, and study them by means of the associated flag bundles, as their natural projective geometric realizations. In this paper we develop the necessary background, and prove some theorems that are flag bundle counterparts of some of the central results in the theory of uniform vector bundles.
\end{abstract}
\maketitle

\section{Introduction}\label{sec:intro}

Although the fact that a vector bundle over the complex projective line $\P^1$ splits as a direct sum of line bundles is a theorem whose history goes back to the end of the nineteenth century, it was not until the 1950's that it achieved its modern form as posed by Grothendieck. Working upon the ideas developed previously by the french school of Cartan and Borel, he considered vector bundles as geometric realizations of principal $G$-bundles, with $G$ reductive, via certain representations of the group $G$, and showed that every principal bundle over $\P^1$ is determined %uniquely 
by a co-character of a Cartan subgroup of $G$. It is then this discrete invariant that determines the splitting type of any vector bundle associated with the principal bundle via a given linear representation of $G$. 

For varieties different from $\P^1$ the situation is far more complicated, since, even in the case of  non rational curves, vector bundles are not, in general, determined by discrete invariants. Moreover, most varieties admit bundles that are not direct sums; perhaps the simplest example of this kind is the tangent bundle over the projective plane $\P^2$, which also shows that decomposability of vector bundles fails already within the class of homogeneous vector bundles. 

On the other hand, the work of Grauert, Schwarzenberger and Van de Ven in the 1960's and 70's, \cite{Schw,VdV}, leads to the conclusion that a natural way to extend Grothendieck's theorem is to consider varieties covered by a family of rational curves, and bundles whose restrictions to all the curves of the family are isomorphic. These bundles are called {\em uniform}; note that by means of Grothendieck's theorem this condition can be written in terms of co-characters or splitting types.  Grauert et al. considered only the particular case of lines in the projective space, posing the problem of understanding whether every bundle on $\P^n$ uniform with respect to the family of lines should be homogeneous. Remarkably, it was shown that the question has an affirmative answer for low rank bundles (\cite{Sato, VdV, Elenc, EHS, Ellia, Ballico1}). 

Besides their relation with homogeneity, the concepts of splitting type and uniformity have been extensively used within the theory of vector bundles, particularly in the case of $\P^n$  
--we refer to \cite{OSS} for a complete account on the topic. Among other significant results, we should mention here the characterization of trivial bundles as the only uniform bundles with trivial splitting type (see \cite{VdV}, \cite[Theorem 3.2.1]{OSS}), and Grauert--M\"ulich theorem, stating that for a semistable bundle on $\P^n$ (not necessarily uniform) the gaps in the general splitting type cannot be greater than one (\cite{GM},  \cite[Theorem 2.1.4]{OSS}).   

On the other hand, we have already mentioned that the concept of uniformity makes perfect sense for vector bundles on varieties covered by a family of rational curves; in particular, the problem of determining its relation with homogeneity can be posed for any rational homogeneous space, and has been already considered  in the case of quadrics \cite{Ballico2}, Grassmannians \cite{Guyot}, and some other varieties \cite{MOS1, Wis2}. Furthermore, Grothendieck's theorem allows us to classify not only vector bundles over $\P^1$, but also principal bundles, hence one may extend the concept of uniformity to the setting of principal bundles on rationally connected varieties, and study its relation with homogeneity in the case in which the base is homogeneous. The results we have obtained suggest that many classical statements on $\P^n$ could be extended to this setting. 

In order to study principal $G$-bundles, we will make use of some of their projective geometric realizations. Instead of considering the projectivizations of some of their associated vector bundles, via representations of $G$, we have chosen to work directly with their associated {\em flag bundles}, which are constructed upon the action of the defining group $G$ on the flag manifold $G/B$. A geometrical reason for this choice is that flag manifolds are particularly simple when one looks at their families of minimal rational curves; in fact, the intersection properties of these families contain the necessary information to reconstruct the action of the group $G$ (see \cite{OSWW,OSW}). As a second motivation, flag bundles can be constructed upon rational homogeneous bundles, which appear sometimes within the framework of the theory of Fano manifolds (see \cite{Hw,LM}).

The goal of the present paper, which is the first of a project in which we study uniform principal $G$-bundles ($G$ semisimple)  over Fano manifolds, is to develop the background necessary to address these questions, and to present a number of theorems that are flag bundle versions of some of the central results in the theory of uniform vector bundles.  
\subsection{Outline}
We start in Section \ref{sec:prelim} with some generalities on flag bundles and their relation to principal bundles and their representations. We pay special attention to certain filtrations of their relative tangent bundles, that will be useful later on. In Section \ref{sec:diagred} we define decomposability, reducibility and diagonalizability for flag bundles, generalizing the different standard decomposability notions of vector bundles. We also discuss the interactions among these concepts, relating them with the existence of sections of the associated rational homogeneous bundles (see Corollary \ref{cor:decompo1} and Section \ref{ssec:redu}). For instance, as a generalization of the fact that a $\P^1$-bundle on a Fano manifold of Picard number one is decomposable if and only if it admits a section, we show the following (cf. Corollary \ref{cor:decompo1}):

\begin{theorem*}
Let $X$ be a Fano manifold of Picard number one, and $\pi:Y\to X$ a  flag bundle. Then $\pi$ is diagonalizable if and only if it admits a section.
\end{theorem*}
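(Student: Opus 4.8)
The plan is to interpret diagonalizability and the existence of a section in terms of reductions of the structure group, and then to use the Picard number one hypothesis to promote one to the other. First I would recall from Section \ref{sec:diagred} that $\pi$ is diagonalizable precisely when the principal $G$-bundle $P$ underlying $Y$ admits a reduction of its structure group to a maximal torus $T\subset G$; equivalently, the associated $G/B$-bundle $Y$ admits a flag of sections indexed by the Weyl group, or (the formulation I expect to be most convenient) the associated $G/T$-bundle has a section. On the other hand, a section of $\pi\colon Y=P\times_G G/B\to X$ is exactly a reduction of the structure group of $P$ to the Borel $B$. So the content of the theorem is: over a Fano manifold of Picard number one, a reduction to $B$ can be refined to a reduction to $T$. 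The ``only if'' direction is immediate, since $T\subset B$ gives a section of $\pi$ from any diagonalization; all the work is in the converse.

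For the converse, suppose $\sigma\colon X\to Y$ is a section, giving a reduction $P_B\subset P$ to $B$. Write $B=T\ltimes U$ with $U$ the unipotent radical. Filtering $U$ by the lower central series yields a tower of affine-space bundles, and the obstruction to lifting the $B$-reduction step by step up to a $T$-reduction lives in cohomology groups of the form $H^1(X,\cF)$, where each $\cF$ is a vector bundle built from the successive quotients $\fu_i/\fu_{i+1}$ of the Lie algebra filtration, twisted by the already-constructed reduction. The key point is that these bundles $\cF$ are subquotients of the relative tangent bundle $T_{Y/X}$ pulled back along $\sigma$ — precisely the filtration of the relative tangent bundle singled out in Section \ref{sec:prelim} — and on a Fano manifold of Picard number one one controls such bundles via their restriction to a minimal rational curve, using that $\sigma$ maps minimal rational curves on $X$ into the fibers of $\pi$ (which are rational homogeneous, hence have very positive tangent bundles along their own minimal curves). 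I would argue that each $\cF$ is globally generated, or at least has vanishing $H^1$, by a Mori-theoretic / splitting-type argument on the curves, so that every obstruction class vanishes and the section can be successively deformed into a $T$-reduction. Equivalently, and perhaps more cleanly, one shows the $U$-bundle $P_B/P_T\to X$ (an iterated affine bundle) has a section because each stage is an affine bundle modeled on a vector bundle with no higher cohomology on $X$.

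The main obstacle is this cohomology vanishing step: proving that the vector bundles arising from the unipotent filtration, twisted by the $B$-reduction, have $H^1(X,-)=0$. On $\P^1$ this is Grothendieck's theorem; in general one cannot expect it for an arbitrary section, so the Picard number one and Fano hypotheses must be used essentially, presumably to force the relevant bundles to be semiample or globally generated by restricting to the family of minimal rational curves and invoking the positivity of $T_{G/B}$ together with a theorem of the ``uniform bundles with trivial-type restriction are trivial'' flavor already developed in this paper. I would structure the argument so that the inductive step reduces to the rank-one case — a $\P^1$-bundle on $X$ of Picard number one admitting a section is decomposable — which is exactly the classical fact the statement generalizes, and then assemble the full diagonalization from these rank-one splittings compatibly, using that the root subgroups generate $U$.
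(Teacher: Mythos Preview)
Your overall architecture matches the paper's: diagonalizability is reduction to $T$, a section is reduction to $B$, and the content is refining a $B$-reduction to a $T$-reduction by killing an obstruction in $H^1$ of a bundle built from the nilradical $\fn$. This is exactly Proposition~\ref{prop:decompo1} specialized to $I=\emptyset$.

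The gap is in the vanishing step, which you correctly flag as the ``main obstacle'' but then resolve incorrectly. You propose to prove $H^1(X,\cF)=0$ by restricting to minimal rational curves and using positivity of $T_{G/B}$, arguing that the $\cF$'s are globally generated or close to it. This does not work: the section $\sigma$ is arbitrary, so the pullbacks $\sigma^*R$ can be arbitrary line bundles on $X$, with no positivity whatsoever, and knowing their degree on a rational curve does not control $H^1$ on $X$. (Also, ``$\sigma$ maps minimal rational curves on $X$ into the fibers of $\pi$'' is false: a section is transverse to the fibers.) What you are missing is the following simplification. Because the Levi of $B$ is a \emph{torus}, its action on $\fn$ is completely reducible into one-dimensional weight spaces, so the obstruction bundle is a direct sum of \emph{line bundles}:
\[
\mho_\emptyset \cong \bigoplus_{R\in\Phi^-} \cO_X(\sigma^*R).
\]
On a Fano manifold of Picard number one with $\dim X\ge 2$, one has $H^1(X,L)=0$ for \emph{every} line bundle $L$: if $L$ is nef then $L-K_X$ is ample and Kodaira applies; if $-L$ is ample then $H^1(X,L)=0$ by Kodaira--Nakano since $1<\dim X$. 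The case $X\cong\P^1$ is Grothendieck. No curve-theoretic argument is needed; the Picard number one hypothesis is used only to force every line bundle into one of these two cases.
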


Section \ref{sec:groth} is devoted to the study of flag bundles over the projective line. In particular we recall the concept of {\em tag} of a $G/B$-bundle on a rational curve, defined in \cite{OSW} as a $\rk(G)$-vector of non-negative integers associated with the nodes of the Dynkin diagram of $G$,  which is a geometric counterpart of Grothendieck's classifying co-character (see Remark \ref{rem:tagsplit} for the relation between the tag and the splitting type when $G/B$ is the complete flag manifold of a projective space). Furthermore, we describe the Mori cones (Proposition \ref{prop:moricone}) and the families of  minimal sections (Proposition \ref{prop:trivialsub}) of $G/B$-bundles over $\P^1$.

 The concept of tag allows us to define the notion of uniform flag bundle, that we  introduce in Section \ref{sec:unif}.
After discussing families of minimal sections of uniform flag bundles, we  prove  a characterization of trivial flag bundles in terms of their tags with respect to certain families of rational curves, namely Theorem \ref{thm:trivial1}: 
\begin{theorem*}\label{thm:trivial2}
Let $X$ be a manifold, rationally chain connected with respect to unsplit families of rational curves $\cM_1, \dots,\cM_s$, and $\pi:Y\to X$ a $G/B$-bundle over $X$. 
Then $ Y\cong X\times G/B$ is trivial as a $G/B$-bundle over $X$ if and only if for every rational curve $\Gamma_i$ belonging to a family $\cM_i$, the tag of the $G/B$-bundle $Y$ on $\Gamma_i$ is zero.
\end{theorem*}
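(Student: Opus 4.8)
The plan is to mimic the classical argument characterizing trivial uniform vector bundles (the Van de Ven type statement cited in the introduction), adapted to the flag bundle setting via the tag invariant. The key point is that vanishing of the tag on a curve $\Gamma$ means the restriction $Y|_\Gamma$ is the trivial $G/B$-bundle over $\Gamma \cong \P^1$; this is precisely the content of the tag being a geometric replacement for Grothendieck's classifying cocharacter, so when the tag is zero the corresponding cocharacter is trivial and the restricted bundle splits trivially. One direction is immediate: if $Y \cong X \times G/B$, then every restriction is trivial and every tag vanishes.

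For the converse, I would proceed in two stages. First I would handle the case where $X$ is rationally chain connected by a single unsplit family $\cM = \cM_1$. Fix a base point $x_0 \in X$ and consider the universal family of chains of $\cM$-curves through $x_0$; by rational chain connectedness these chains dominate $X$. The idea is to trivialize $Y$ along each such chain and show the trivializations glue. On a single curve $\Gamma$ of the family, the hypothesis that the tag is zero gives $Y|_\Gamma \cong \Gamma \times G/B$, and since $\Gamma$ is a rational curve the space of such trivializations is a torsor under $\Mor(\Gamma, G/B)$; the trivialization is unique once we fix the fiber over one point of $\Gamma$. Propagating this fiber-identification along a connected chain then produces, for each point $x \in X$, a well-defined isomorphism $Y_x \cong G/B$, compatible with the base point — the only thing to check is that it is independent of the chosen chain connecting $x_0$ to $x$. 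Here I would use that $\cM$ is unsplit: the relevant parameter spaces of chains are proper, so the identifications vary in families and the set of points where two chains give the same identification is closed; combined with irreducibility of the chain locus one concludes the identification is globally well defined, hence yields a morphism $Y \to X \times G/B$ over $X$ which is fiberwise an isomorphism, hence an isomorphism.

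Second, I would reduce the general multi-family case to the single-family case. Rational chain connectedness with respect to $\cM_1,\dots,\cM_s$ means any two points are joined by a chain of curves taken from the various families; the argument above applies verbatim to mixed chains, since at each curve $\Gamma_i$ in the chain we again have $Y|_{\Gamma_i}$ trivial (by the tag-zero hypothesis on that family) and we propagate the fiber-identification across the node joining consecutive curves exactly as before. Alternatively, and perhaps more cleanly, one can first prove that $Y$ is constant in the direction of each family $\cM_i$ separately — i.e. the identification of fibers is well defined along $\cM_i$-chains — and then observe that the combined equivalence relation generated by all the families is all of $X \times X$, so the fiber is globally constant.

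I expect the main obstacle to be the well-definedness of the global fiber-identification, i.e. ruling out monodromy around loops of chains. The tag hypothesis only controls $Y$ curve by curve, not a priori in families, so the essential work is to upgrade "trivial on each curve" to "trivial in the family": one must show the trivializations can be chosen to vary algebraically over the (irreducible, proper) parameter space of chains through $x_0$, which is where properness of the unsplit families enters decisively. Once the identification $Y_x \cong G/B$ is shown to be independent of path, the conclusion that the resulting bundle map is an isomorphism is formal. A secondary technical point is to make sure the relative automorphisms of $G/B$ do not obstruct gluing — but since we rigidify by fixing the fiber over $x_0$, this is handled by the uniqueness of the pointed trivialization on each rational curve.
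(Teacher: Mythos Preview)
Your approach is genuinely different from the paper's, and closer in spirit to the vector bundle arguments the paper cites as ``complementary''. The paper does not propagate trivializations along chains. Instead it lifts each family $\cM_i$ to a family $\overline{\cM}_i$ of minimal sections in $Y$, takes the rational quotient of $Y$ with respect to all these lifted families, and shows that a general fiber $X'$ of this quotient maps finitely onto $X$: finiteness because $-K_\pi$ is trivial on $X'$ (each $K_j$ vanishes on minimal sections when the tag is zero) but $\pi$-ample, and surjectivity because minimal sections over chains in $X$ produce chains in $X'$. Adjunction then makes $\pi_{|X'}$ \'etale, so simple connectedness of $X$ forces $X'$ to be a section; Corollary~\ref{cor:decompo1} converts the section into diagonalizability, and the resulting line bundles are trivial on every curve of the families, hence numerically trivial, hence trivial since $H^1(X,\cO_X)=0$. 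This route uses the section/diagonalizability machinery of Section~\ref{sec:diagred} precisely to sidestep the monodromy question.

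Your proposal has a real gap at the point you yourself flag. The sentence ``the set of points where two chains give the same identification is closed; combined with irreducibility of the chain locus one concludes the identification is globally well defined'' is not an argument: a proper closed subset of an irreducible variety is perfectly possible. What would actually work is that the parameter space of chains of bounded length from $x_0$ to $x$ is proper (by unsplitness), and if the monodromy were an honest morphism from this space to the affine group $G=\Aut^0(G/B)$ it would be constant on each connected component. But turning the monodromy into a morphism is exactly the ``trivializations vary algebraically in families'' step you postpone, and connectedness of the chain space is not free either. These issues can be handled, but your proposal does not do so; the paper's rational quotient argument is designed to avoid them.
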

Our proof is complementary to the different proofs characterizing trivial vector bundles in terms of their restrictions to curves (\cite{AW, BdS,Pan}, see also \cite[Proposition 2.4]{kyoto}), stressing the interplay between both approaches --flag bundle versus vector bundle-- to these questions.

The last section is devoted to the study of criteria of reducibility and diagonalizability  for uniform flag bundles. In this case it is convenient to consider a special type of reducibility, named {\it uniform reducibility}, defined in Section \ref{ssec:redu} upon the particular family of rational curves with respect to which uniformity is defined. For instance, in Lemma \ref{lem:irred1} we show that for a uniformly reducible flag bundle, its diagonalizability reduces to the diagonalizability of an auxiliary flag bundle of lower rank. Roughly speaking, one could say that uniform flag bundles whose tag is sufficiently positive are reducible. In this spirit, we obtain a flag bundle counterpart of the well known Grauert--M\"ulich theorem (see Theorem \ref{thm:GMcrit1} for a precise statement):
\begin{theorem*}\label{thm:GMcrit1}
Any irreducible uniform flag bundle on a Fano manifold of Picard number one contains only $0$'s and $1$'s in its tag.
\end{theorem*}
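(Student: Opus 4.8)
The plan is to argue by contradiction: suppose $\pi:Y\to X$ is an irreducible uniform $G/B$-bundle on a Fano manifold $X$ of Picard number one whose tag $\tau=(a_1,\dots,a_r)$ (with respect to the family $\cM$ defining uniformity) has some entry $a_k\geq 2$. I would translate this into a statement about the relative tangent bundle $T_\pi$ and its filtration, introduced in Section \ref{sec:prelim}: on a general curve $\Gamma$ of $\cM$, the pullback of $T_\pi$ decomposes according to the roots of $G$, and the splitting type of each root-line subbundle is the pairing of that root with the tag. The condition $a_k\geq 2$ means that, restricted to $\Gamma$, the subbundle of $T_\pi$ corresponding to the simple root $\alpha_k$ (pulled back from the $k$-th associated $\P^1$-bundle, or rather $G/P_k$-bundle) contains a summand of degree $\geq 2$, i.e.\ is "too positive''. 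The idea, exactly as in the classical Grauert--M\"ulich argument for vector bundles, is that a jumping-free, sufficiently positive subbundle forces a splitting of the family of deformations, which in turn produces a sub-$G/B$-bundle, contradicting irreducibility.

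Concretely, I would proceed as follows. First, reduce to the rank-one building block: by the results of Section \ref{ssec:redu}, irreducibility of $Y$ implies that each of the elementary $G/P_k$-bundles (equivalently, the associated $\P^1$-bundles over the $G/P_k$-bundle) is itself not "split off'', so it suffices to derive a contradiction from $a_k\geq 2$ for a single node. Second, consider the $\P^1$-bundle $\rho:Z\to W$ obtained from $Y$ by collapsing all but the $k$-th direction; the tag condition says that on a minimal curve $\Gamma$, $\rho^{-1}(\Gamma)$ is a Hirzebruch surface $\F_e$ with $e=a_k\geq 2$, hence has a section $C$ of negative self-intersection with $-C^2=e\geq 2$. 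The curves in $C$, as $\Gamma$ varies in $\cM$, sweep out $W$ and form a family whose deformations inside $Z$ are unobstructed yet do not dominate the fibers of $\rho$ (because $e\geq 2$ makes the section rigid in the normal direction — this is where the strict inequality is used, paralleling \cite[Theorem 2.1.4]{OSS}). Running a bend-and-break / rational-curve-family argument on $Z$ produces a sub-$\P^1$-bundle of $Z$, i.e.\ a reduction of structure, and pushing this back up to $Y$ contradicts irreducibility.

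The main obstacle, and the step I would spend the most care on, is the middle one: showing that the negative section $C\subset \F_e$ glues, as $\Gamma$ moves, into an honest sub-variety of $Z$ that is a $\P^1$-bundle (or at least a section of an intermediate flag bundle) over $X$. In the vector bundle world this is the statement that the maximal destabilizing subsheaf of a uniform bundle, having no jumps, is a subbundle; here the analogue must be phrased via the relative minimal rational curves of the flag bundle and the rigidity of the tag under deformation. I expect to need: (i) that uniformity plus Picard number one forces the "type'' of $\rho^{-1}(\Gamma)$ to be constant (no jumping — this is built into the definition of uniform tag), and (ii) a rigidity statement that a family of sections of $\F_e\to \P^1$ with $e\geq 2$ has, through a general point, a unique member, so the evaluation map of the family of negative sections is birational onto its image and the image is the desired sub-bundle. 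Once the sub-$G/B$-bundle is produced, the contradiction with irreducibility is immediate from the definitions in Section \ref{sec:diagred}. The case $a_k\in\{0,1\}$ genuinely cannot be excluded — $a_k=1$ gives $\F_1$, whose negative section has self-intersection $-1$ and is \emph{not} rigid in the needed sense, so no reduction is forced — which is why the statement allows $1$'s.
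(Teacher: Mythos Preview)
Your intuition is right --- this is a Grauert--M\"ulich statement, and the mechanism is that a tag entry $\geq 2$ produces enough rigidity to force a reduction --- but the execution has genuine gaps and the geometric setup is mislocated. The object you call ``the $\P^1$-bundle obtained from $Y$ by collapsing all but the $k$-th direction'' does not exist as described: the contraction $\rho_{D\setminus\{k\}}:Y\to Y_{D\setminus\{k\}}$ has as fibers the flag variety of the Levi of $P(D\setminus\{k\})$, not $\P^1$, while the only elementary $\P^1$-bundle around, $\rho_k:Y\to Y_k$, lives over $Y_k$ rather than over $X$, so ``$\rho^{-1}(\Gamma)$'' for $\Gamma\subset X$ is not a Hirzebruch surface. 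More importantly, even if one sets up the surfaces correctly (over the image in $Y_k$ of a minimal section $\ol\Gamma$), the gluing step you flag as ``the main obstacle'' is the entire content of the theorem, and rigidity of the negative section inside a single $\F_e$ says nothing about why two curves $\Gamma,\Gamma'\in\cM$ through the same point $x\in X$ should pick out the same point of the fiber $\pi^{-1}(x)$. Bend-and-break does not supply this; it degenerates curves, it does not manufacture global sections.

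The paper's argument is infinitesimal and sidesteps gluing altogether. The minimal sections are already assembled (Section~\ref{ssec:redu}) into a morphism $s_0:\cU\to Y_{I_0}$, and one must show that $\rho_{I_0,I_1}\circ s_0:\cU\to Y_{I_1}$ factors through $q:\cU\to X$. By Lemma~\ref{lem:GM2} it suffices that the relative differential $T_{\cU|X}\to (\rho_{I_0,I_1}\circ s_0)^*T_{Y_{I_1}|X}$ vanish along a general fiber $\ol\Gamma\cong\P^1$ of $\ol\cU\to\ol\cM$. There the source is $\cO_{\ol\Gamma}(-1)^{\oplus\dim q}$ by completeness of $\cM$, while the target carries the root filtration of Section~\ref{ssec:filt} with successive quotients $\cO_{\ol\Gamma}(L\cdot\ol\Gamma)$ for $L\in\Phi^+\setminus\Phi^+(I_1)$; every such $L$ involves a simple root with tag $\geq 2$, so $L\cdot\ol\Gamma\leq -2$ and $\Hom(\cO(-1),\cO(L\cdot\ol\Gamma))=0$. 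Hence the differential is zero and Proposition~\ref{prop:MIred} gives $(\cM,I_1)$-reducibility in one line. Your $\F_e$ picture is the rank-one shadow of this computation (a single simple root, one step in the filtration, and $\cO(-e)$ is exactly the normal bundle of the negative section), but what does the work is the vanishing of the differential of $s_0$ on $\cU$, not the rigidity of a curve in an isolated surface.
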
 
In particular the problem of diagonalizability of uniform flag bundles of low rank reduces to that of flag bundles tagged with zeros and ones. Furthermore, not every tag with ones and zeros may occur on a non diagonalizable bundle: for every node $j$ marked by $1$ we define an integer $m_j$ which depends on the number of nodes marked with zero ``adjacent'' to $j$ (for the precise definition see Table \ref{tab:mj}) and we set $m:=\max{m_j}+1$. Then

\begin{theorem*}\label{thm:some1}
Let $X$ be a Fano manifold, $\cM$ be an unsplit dominating complete family of rational curves, whose evaluation morphism $q:\cU\to X$ has connected fibers.  
Let $\pi:Y\to X$ be a uniform $G/B$-bundle over $X$, with tag $(d_1,\dots,d_n)$. 
If every morphism  $f:\cU\to \cU'$ over $X$ whose image has relative dimension smaller than $m$ is relatively constant then $\pi$ is diagonalizable.
\end{theorem*}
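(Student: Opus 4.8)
The plan is to reduce diagonalizability to the triviality criterion of Theorem \ref{thm:trivial1} (the second starred theorem above), by exhibiting, for each simple root $\alpha_j$ in the support of the tag, a reduction of structure group that is forced to be trivial on a general rational curve of $\cM$ and hence trivial on $X$. First I would recall from Section \ref{sec:diagred} that diagonalizability of $\pi:Y\to X$ is equivalent to a reduction of the structure group of the underlying principal $G$-bundle to a maximal torus; by the discussion on reducibility in Section \ref{ssec:redu}, it suffices to produce, for every node $j$ with $d_j\neq 0$, a section of the associated $G/P_j$-bundle $Y_j\to X$ (where $P_j$ is the maximal parabolic corresponding to dropping the $j$-th node), and then to iterate: cutting down by these sections one lands in a flag bundle whose tag has strictly smaller support, and an induction on the number of nonzero entries of the tag, together with Lemma \ref{lem:irred1}, finishes the argument. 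So the real content is: \emph{under the hypothesis on morphisms out of $\cU$, the bundle $Y_j\to X$ admits a section whenever $d_j>0$.}

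Next I would analyze $Y_j\to X$ over a general curve $\Gamma\in\cM$. By uniformity the restriction $Y_j|_\Gamma$ is a fixed homogeneous $G/P_j$-bundle over $\P^1$, determined by the tag; because $d_j>0$, the relative minimal rational curves of $Y_j\to X$ lying over $\Gamma$ organize into sub-VMRT structures, and the quantity $m_j$ in Table \ref{tab:mj} is precisely designed to measure the dimension of the "Chains/ChLocus" locus $\cloc$ one must sweep out using these curves before one can split off a section — concretely, the number $m_j$ counts the zero-marked nodes adjacent to $j$ because those govern how the $P_j$-orbit structure propagates along chains of minimal curves in the fibre. The strategy is to pull back the family $\cM$ to $Y_j$, obtaining an evaluation diagram, and to run the relative version of a "rational chain connectedness forces a section" argument: one builds a morphism $f:\cU\to\cU'$ over $X$ out of the universal family whose fibers encode the obstruction to the section, shows its image has relative dimension at most $m-1$, and then invokes the hypothesis that every such $f$ is relatively constant. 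Relative constancy of $f$ translates into the statement that the relevant locus in each fibre $G/P_j$ is a single point, i.e. the section exists; pushing this from the general curve to all of $X$ uses that $\cM$ is dominating with connected evaluation fibers, exactly as in the proof of Theorem \ref{thm:trivial1}.

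The main obstacle I anticipate is the bookkeeping that links the combinatorial integer $m_j$ (defined purely in terms of the Dynkin diagram and the tag) to the geometric relative dimension of the image of the morphism $f:\cU\to\cU'$ that one is forced to construct. This requires a careful description of how the minimal rational curves on the fibre $G/P_j$ — and the chains they form — vary in the flag bundle, and in particular one must show that the obstruction to the section is captured by a morphism whose image dimension is controlled by $\max_j m_j$ rather than by something larger; the "$+1$" in $m=\max m_j+1$ is the usual shift between "dimension of the locus one sweeps out" and "the hypothesis one needs to collapse it to a point". A secondary technical point is ensuring that, after cutting down by one section, the auxiliary flag bundle produced (via Lemma \ref{lem:irred1}) still satisfies the hypotheses of the theorem — i.e. the family $\cM$ and its evaluation behave well under this reduction — so that the induction on the size of the support of the tag goes through. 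Once these two points are in place, the triviality criterion of Theorem \ref{thm:trivial1} closes each inductive step and yields diagonalizability.
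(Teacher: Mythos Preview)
Your high-level target is right: producing, for each $j$ with $d_j\neq 0$, a section of the bundle $\pi_{D\setminus\{j\}}:Y_{D\setminus\{j\}}\to X$ (what you call $Y_j$) is exactly what the paper aims at. But the mechanism you sketch to obtain these sections is not the paper's mechanism, and as written it does not make contact with the integer $m_j$.

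The paper does not argue via chains, VMRT loci, or any ``rational chain connectedness forces a section'' principle. The engine is infinitesimal: there is a canonical morphism $s_0:\cU\to Y_{I_0}$ (sending a point of $\cU$ to the unique minimal-section datum over it), and $(\cM,J)$-reducibility is equivalent, by Lemma~\ref{lem:GM2}, to the composition $T_{\cU|X}\to s_0^*T_{Y_{I_0}|X}\to s_0^*\rho_{I_0,J}^*T_{Y_J|X}$ having rank $<\cdim(q)$ at a general point. One then restricts this map of bundles to a general fiber $\Gamma\cong\P^1$ of $\cU\to\cM$ and lifts to a minimal section $\ol\Gamma$ in $Y$. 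On the left one has $\cO(-1)^{\oplus\dim(q)}$; on the right one uses the filtration of $\rho_J^*T_{Y_J|X}$ by line bundles indexed by $\Phi^+\setminus\Phi^+(J)$ (Section~\ref{ssec:filt}). The only quotients that can receive a nonzero map from $\cO(-1)$ are those $L$ with $L\cdot\ol\Gamma=-1$, and for $J=D\setminus\{j\}$ with $d_j=1$ these are exactly the positive roots supported on $I_0(j)$ with $-K_j$-coefficient equal to $1$: there are $m_j$ of them. That is what $m_j$ counts --- not ``zero-marked nodes adjacent to $j$'' nor the dimension of a chain locus. Since $m_j<\cdim(q)$, Proposition~\ref{prop:MIred} gives $(\cM,D\setminus\{j\})$-reducibility.

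Two further points where your outline diverges. First, you treat all $j$ with $d_j\neq 0$ uniformly, but $m_j$ is only defined for $d_j=1$; the nodes with $d_j\geq 2$ are handled separately by the Grauert--M\"ulich step (Theorem~\ref{thm:GMcrit1}), which gives $(\cM,I_1)$-reducibility for free. Second, the paper does not iterate and does not need your ``secondary technical point'' about the hypotheses surviving a reduction: once one has $(\cM,D\setminus\{j\})$-reducibility for every $j\in I_1\setminus I_0$ and $(\cM,I_1)$-reducibility, Lemma~\ref{lem:minred} (intersections of uniformly reducible index sets are uniformly reducible) gives $(\cM,I_0)$-reducibility directly, and Corollary~\ref{cor:irred2} finishes. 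Your inductive scheme could in principle be made to work, but as you yourself note, controlling the new $m'_j$ after a reduction is nontrivial, and it is entirely bypassed by the intersection lemma.
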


In a nutshell, the above statement (see Theorem \ref{thm:some1}) tells us that, for a non diagonalizable bundle, every $1$ in the tag must be conveniently isolated by zeros, depending 
on the geometry of rational curves on the variety.

We finish the section and the paper with two applications of this result, Corollaries \ref{prop:only1} and \ref{cor:only1}, in which we consider the case of a tag with no zeroes.

%%%%%%%%%%%%%%%%%%%%%%%%%%%%%%%%%%%%%%%%%%%%%

\section{Setup and preliminaries}\label{sec:prelim}

Along this paper $X$ will denote a smooth complex projective algebraic variety. A {\it $Z$-bundle} over $X$ is a smooth morphism $\pi:Y\to X$ whose scheme theoretical fibers are isomorphic to $Z$. A well known theorem of Fischer and Grauert states that such a bundle is locally trivial in the analytic topology: there exists an analytic open covering $\{\cV_i,\,\, i \in I\}$ of $X$ and trivializations $\phi_i : \cV_i \times Z \to \pi^{-1}(\cV_i)$ (isomorphisms commuting with the corresponding projections onto $\cV_i$). If we consider $\{\phi_i,\,\,i\in I\}$ to be the whole atlas of trivializations of $\pi$, then $\pi$ is completely determined by the transitions $\phi_{ij}:=\phi_i^{-1}\circ\phi_j:(\cV_{i}\cap \cV_{j})\times Z\to (\cV_{i}\cap \cV_{j})\times Z$, that can be thought of as maps $\theta_{ij}:\cV_{ij}:=\cV_{i}\cap \cV_{j}\to \Aut(Z)$. Then $\pi$ is determined by the corresponding cocycle $\theta=(\theta_{ij})\in H^1(X,\Aut(Z))$ (by abuse of notation, we mean here the cohomology of the sheafified group $\Aut(Z)$ on the analytic space associated with $X$). Furthermore, in the case in which $X$ is simply connected -- later our base varieties will be Fano manifolds --   the defining cocycle of a $Z$-bundle takes values in the identity component $\Aut(Z)^\circ$ of $\Aut(Z)$.

We will be mostly interested in {\it $G/B$-bundles}, that is in the case in which $Z$ is of the form $G/B$, where $G$ is a semisimple complex algebraic group, with Dynkin diagram $\cD$, and $B\subset G$ is a Borel subgroup; we will usually refer to them simply as {\em flag bundles}. Assuming that $X$ is simply connected, $\pi$ is determined by a cocycle $\theta\in H^1(X,\Aut(G/B)^\circ)$. Moreover, it is well known that $\Aut(G/B)^\circ$ is semisimple, isogenous to $G$ (see \cite[Remark 2.1]{OSW}), and that we may write $G/B$ as a quotient of $\Aut(G/B)^\circ$ by a Borel subgroup. We will always assume that the group $G$ from which $G/B$ is obtained as a quotient is $\Aut(G/B)^\circ$, and then we may say that a $G/B$-bundle on a simply connected manifold $X$ is completely determined by a cocycle $\theta\in H^1(X,G)$. Alternatively the $G/B$-bundle may be reconstructed from this cocycle by means of the Borel construction: $\theta$ determines a $G$-principal bundle $\pi_G:E\to X$, and we may identify $Y$ with the algebraic variety $$E\times^GG/B:=(E\times G/B)/\sim,\qquad (e,gB)\sim(eh,h^{-1}gB),\,\,\forall h\in G,$$ and then $\pi$ corresponds to the natural map sending the class of $(e,gB)$ to $\pi_G(e)$.

If we consider a maximal torus $H\subset B$, it determines a root system $\Phi$, contained in the Lie algebra $\fh$ of $H$, whose Weyl group $W$ is isomorphic to the quotient $N(H)/H$ of the normalizer $N(H)$ of $H$ in $G$. Within $\Phi$, $B$ determines a base of positive simple roots $\Delta$. %Finally, as usual, we consider the Dynkin diagram $\cD$ associated with $\Phi$.
 We will always choose an ordering of the set of simple roots $\Delta=\{\alpha_1,\dots,\alpha_n\}$, (in the case of simple algebraic group we will always choose the ordering of \cite[p.~58]{Hum2}), and denote by $D$ the set of indices $\{1,\dots,n\}$. By definition, the {\it rank} of the semisimple group $G$ is defined as $\rk(G):=\dim H=\sharp(\Delta)=n$. 

We denote by $r_i$ the reflection associated with $\alpha_i$. Then, for every subset $I\subset D$, we may consider a parabolic subgroup $P(I)$ defined by $P(I):=BW_IB$, where $W_I\subset W$ is the subgroup of $W$ generated by the reflections $r_i$, $i\in I$.  
Going back to our setting, for every such subset $I\subset D $  there is a factorization:
\begin{equation}\label{eq:contract}
\xymatrix{Y\ar@/^1pc/[rr]^{\pi}\ar[r]_{\rho_I}&Y_I\ar[r]_{\pi_I}&X}
\end{equation}
where $Y_I:=E\times^GG/P(I)$  is a {\em $G/P(I)$-bundle} over $X$. In the case in which $I=\{i\}$, we will simply write $\rho_i, \pi_i, Y_i$. %By construction, for every $I$, the map $\pi_I$ is a {\em $G/P(I)$-bundle}, %, in the sense that it is locally trivial with fibers isomorphic to $G/P(I)$

Finally, we denote by $N^1(Y|X)$ the cokernel of the pullback map $N^1(X)\to N^1(Y)$, between the real vector spaces of classes of $\R$-divisors in $X$ and $Y$.  It is a vector space of dimension equal to the Picard number of $G/B$ (which is equal to $n$), that we may (and will) identify with the linear subspace of $N^1(Y)$ generated by the linearly independent set $\{-K_i,\,\,i\in D\}$, where $-K_i$ denotes the relative anticanonical divisor of the elementary contraction $\rho_{i}:Y\to Y_i$, for any $i\in D$. 
In particular, we may identify $N^1(Y|X)$ with the real vector space $\R\Phi\subset\fh$, by sending $-K_i$ to $\alpha_i$, for all $i\in D$. In this way, denoting by $\Gamma_i$ the numerical class of a fiber of the contraction $\rho_i$, for every $i$, the Cartan matrix of $G$ is the matrix of intersection numbers $(-K_i\cdot\Gamma_j)$. In the sequel, we will always think of the roots of $G$ as the corresponding integral combinations of divisors $-K_i$.   

\subsection{Standard constructions}\label{ssec:stconst}
We include here some classical constructions with principal and fiber bundles.

\noindent{\it 1. Pullback.} Given a $Z$-bundle $\pi:Y\to X$, and a morphism $f:X'\to X$, the fiber product $f^*Y:=Y \times_X X'$ has a natural structure of $Z$-bundle over $X'$. In the case in which $Z=G/B$ and $\pi$ is determined by a cocycle $\theta\in H^1(X,G)$, the bundle $Y\times_X X'\to X'$ corresponds to the image of $\theta$ by the pullback map $H^1(X,G)\to H^1(X',G)$. 
 
\noindent{\it 2. Extension.} Given any morphism of Lie groups $G\to G'$ ($G'$ semisimple), there is a natural map $H^1(X,G)\to H^1(X,G')$ that sends $\theta$ to a cocycle $\theta'$ defining a $G'/B'$-bundle, which may also be described as the $G'/B'$-bundle $E\times^GG'/B'$, where we consider the action of $G$ on $G'/B'$ induced by the map $G\to G'$. 

\noindent{\it 3. Reduction.} Conversely, if $\theta\in H^1(X,G)$ defines a $G/B$-bundle $\pi:Y\to X$, and $f:G'\to G$ is a homomorphism of algebraic groups, we say that $\theta$ admits a {\it reduction to $G'$} if $\theta$ lies in the image of the natural map $H^1(X,G')\to H^1(X,G)$. In the case in which the map $f$ is the inclusion $P:=P(I) \hookrightarrow G$, the reduction to $P$ is equivalent to the existence of a section $\sigma_I$ of $\pi_I:Y_I\to X$. In particular, in the case $P=B$, the reduction of $\theta$ to $B$ is equivalent to the existence of a section of $\pi$. 
Moreover, considering the {\it semisimple part $G_P$ of} $P$ (which is, by definition, the quotient of $P$ by its unipotent subgroup, and then by the center of the image), and its Borel subgroup $B_P:=BG_P\subset G_P$, the extension of $\theta$ to $G_P$ defines a  $G_P/B_P$-bundle $\pi^P:Y^P\to X$. Furthermore, by construction, $Y^P$ admits an embedding $i_P$ into $Y$ satisfying that $\sigma_I\circ\pi^P=\pi_I\circ i_P$.

\noindent{\it 4. Product.} Given two semisimple groups, $G,G'$, and two flag bundles $\pi: Y\to X$, $\pi': Y'\to X$, determined by cocycles $\theta\in H^1(X,G)$ and $\theta'\in H^1(X,G')$, and given any morphism $\rho: G\times G'\to G''$, the cocycle $\rho(\theta,\theta')\in H^1(X,G'')$ defines a flag bundle over $X$. Even in the case in which $\rho$ is injective, the flag bundle obtained is not, in general, the fiber product $Y\times_XY'$.

\subsection{Vector and projective bundles associated with representations}\label{ssec:bunrep}

An especially important case of extension appears when one considers rational or projective representations of the group $G$, that is homomorphisms of algebraic groups from $G$ to $\GL(V)$, or $\PGL(V)$, where $V$ is a finite dimensional complex vector space. These representations give rise to vector bundles $E\times^GV$, and to projective bundles $E\times^G\P(V)$, respectively. 

It is well known that rational representations of $G$ are determined by certain sets of elements in the {\em lattice of characters} of $H$, $$\Mo(H):=\Hom(H,\C^*);$$ %, called {\em weights of the representation}; %, which contains the roots of $G$; 
more concretely, we may consider the induced action of the Weyl group $W$ on the vector space $\Mo(H)\otimes_\Z\R$, and the {\em fundamental Weyl chamber} $\cW\subset \Mo(H)\otimes_\Z\R$ determined by the base $\Delta$. Then an irreducible rational representation of $G$ is determined by a {\em highest weight}, that is, an element $w_{\max}\in \cW\cap \Mo(H)$. The induced action of $H$ on $V$  provides a decomposition 
$$
V=\bigoplus_{w\in \Mo(H)}V_w,
$$  
and the elements $w\in \Mo(H)$ for which $V_w\neq 0$ (called the {\em weights of the representation}) are the points in the intersection of the convex hull of of the orbit $W.w_{\max}$ with $\Mo(H)$ for which the difference with $w_{\max}$ belongs to the lattice generated by $\Delta$ (see for instance \cite[Lecture~14]{FuHa}). If $V$ is not irreducible, it admits a filtration on subrepresentations of $G$, whose quotients are irreducible, each of them having a description as above.

On the other hand, given a projective representation $\P(V)$ of $G$, we cannot claim that it is the projectivization of a rational representation $V$ of the group $G$, but of another semisimple group $G'$ isogenous to $G$. In general, the weights of any such $V$ lie in the lattice contained in $\Mo(H)\otimes_\Z\R$ generated by the fundamental weights $\{\lambda_1,\dots,\lambda_n\}$ of the Lie algebra $\fg$, defined by the property:
$$
r_i(\lambda_j)=\left\{\begin{array}{ll}\lambda_i-\alpha_i&\mbox{if }j=i,\\\lambda_j&\mbox{otherwise.}\end{array}\right.
$$

Note that given a $G/B$-bundle $\pi:Y\to X$, and a set of indices $I\subset D$, the variety $G/P(I)$ can be seen as an orbit of the action of %a group $G'$ isogenus to 
$G$ on a projective space $\P(V_I)$ (see \cite[Claim 23.52]{FuHa}), and the above construction provides and embedding of $Y_I=E\times^GG/P(I)$ into the projective bundle $E\times^G\P(V_I)$.

\subsection{Filtrations of the relative tangent bundle}\label{ssec:filt}

Let $\pi:Y \to X$ be a $G/B$-bundle, and denote by $m$ the dimension of $G/B$, which equals the cardinality of  $\Phi^+\subset\Phi$, which is the set of roots that are nonnegative linear combinations of elements of the base $\Delta$. A total ordering $(L_{1},L_2,\dots,L_{m})$ of the elements of $\Phi^+$ is called {\it admissible} if, for every $L_j,L_{j'}, L_{j''}\in\Phi^+$,  $L_j+L_{j'}=L_{j''}$ implies that $j,j'<j''$. 
Note that, for instance, any total ordering of $\Phi^+$ satisfying $\Ht(L_j)\leq \Ht(L_{j+1})$ (where the {\it height} of a positive root is defined as $\Ht(L):=\sum_ia_i$ for $L=-\sum_ia_iK_i$) is obviously admissible.
 
Then, following \cite{MOSW}, for every admissible order we may construct a filtration of the relative tangent bundle $T_{Y|X}$:
$$
0=\cE_{m}\subset \cE_{m-1}  
\subset\dots\subset\cE_{0}=T_{Y|X},
$$
whose quotients satisfy:
$$
\cE_{r}/\cE_{r+1}\cong\cO_Y(L_{m-r}),\quad \mbox{for }r\in\{0,\dots,m-1\}.
$$
In particular we have (see \cite[Lemma 2.2]{OSW} for an explicit formula):
\begin{lemma}\label{lem:antican}
Let $\pi:Y \to X$ be a $G/B$-bundle. Then the relative anticanonical divisor $-K_{\pi}$ is a positive integral combination of the relative anticanonical divisors $-K_i$ of the elementary contractions $\rho_{i}:Y\to Y_i$. 
\end{lemma}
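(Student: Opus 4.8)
The plan is to read off $-K_{\pi}$ directly from the filtration of $T_{Y|X}$ constructed just above the statement. Since the successive quotients of that filtration are the line bundles $\cO_Y(L_{m-r})$ for $r\in\{0,\dots,m-1\}$, the determinant of $T_{Y|X}$ — i.e.\ the relative anticanonical divisor $-K_{\pi}$ — is the sum of all the $L_j$, $j=1,\dots,m$; that is, $-K_{\pi}=\sum_{L\in\Phi^+}L$. Thus the lemma amounts to the purely root-theoretic fact that the sum of all positive roots of $G$ is a positive integral combination of the simple roots $\alpha_i$ (equivalently, of the classes $-K_i$, under the identification of $N^1(Y|X)$ with $\R\Phi$ described in Section \ref{sec:prelim}).

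To establish that root-theoretic fact, I would argue as follows. Write $2\delta:=\sum_{L\in\Phi^+}L$ for twice the Weyl vector $\delta$. Each $L\in\Phi^+$ is, by definition of $\Phi^+$, a \emph{nonnegative} integral combination of the $\alpha_i$; summing over all of $\Phi^+$ therefore yields a nonnegative integral combination $2\delta=\sum_i c_i\alpha_i$ with $c_i\in\Z_{\ge 0}$. It remains to see that every coefficient $c_i$ is strictly positive. For this, recall the standard fact that $\delta=\sum_i\varpi_i$, the sum of the fundamental weights, so $\langle 2\delta,\alpha_i^\vee\rangle=2$ for every $i$; in particular no $\alpha_i$-coordinate can vanish, since if $c_i=0$ then applying the simple reflection $r_i$ would send $2\delta$ to $2\delta+2\alpha_i$, contradicting that $r_i$ permutes $\Phi^+\setminus\{\alpha_i\}$ and hence fixes $2\delta-2\alpha_i$ — forcing $c_i\geq 1$. (Alternatively, one may simply invoke the classical formula $2\delta=\sum_i c_i\alpha_i$ with all $c_i>0$, tabulated in \cite[\S13, Table 1]{Hum2} or deducible from \cite[Lemma 2.2]{OSW}.) Transporting this back along the identification $\alpha_i\leftrightarrow -K_i$ gives $-K_{\pi}=\sum_i c_i(-K_i)$ with $c_i$ positive integers, which is the assertion.

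The only subtlety — and the step I would be most careful about — is checking that the "determinant of the filtration" computation is legitimate: one needs that the filtration $0=\cE_m\subset\cdots\subset\cE_0=T_{Y|X}$ has locally free quotients (so that $\det T_{Y|X}=\bigotimes_r \det(\cE_r/\cE_{r+1})$ holds as line bundles), and that the isomorphisms $\cE_r/\cE_{r+1}\cong\cO_Y(L_{m-r})$ are compatible with the identification of $N^1(Y|X)$ with $\R\Phi$ that sends $-K_i$ to $\alpha_i$. Both are built into the construction of \cite{MOSW} recalled above, so this is really a matter of bookkeeping rather than a genuine obstacle; the positivity of the coefficients $c_i$ is then the one substantive input, and it is classical.
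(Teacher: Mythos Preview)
Your approach is exactly what the paper has in mind: the lemma is stated there as an immediate consequence of the filtration (with a reference to \cite[Lemma~2.2]{OSW} for the explicit coefficients), so reading off $-K_\pi=\det T_{Y|X}=\sum_{L\in\Phi^+}L$ from the successive line-bundle quotients and then identifying this with a positive integral combination of the $-K_i$ is precisely the intended argument. One small remark: your reflection sub-argument for $c_i>0$ is both unnecessary and slightly tangled (the claim that $c_i=0$ forces $r_i(2\delta)=2\delta+2\alpha_i$ does not follow, and it is $2\delta-\alpha_i$, not $2\delta-2\alpha_i$, that is fixed by $r_i$); positivity is in fact immediate, since $\alpha_i\in\Phi^+$ already contributes $1$ to the $\alpha_i$-coefficient of $\sum_{L\in\Phi^+}L$, and every other positive root contributes nonnegatively.
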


Given any set $J\subset D $, we denote by $\Phi^+(J)$ the subset of $\Phi^+$ consisting of positive roots that are linear combinations of the $-K_i$'s, $i\in J$. 

\begin{definition}\label{def:admiscomp}
With the same notation as above,
given a chain of subsets of $D $, $\emptyset=J_0\subsetneq J_1\subsetneq J_2\subsetneq\dots \subsetneq J_{k+1}=D$, an admissible ordering $(L_{1},\dots,L_{m})$ of $\Phi$  is said to be {\em compatible with} $J_1\subsetneq J_2\subsetneq\dots \subsetneq J_k$ if for every $r=1,\dots,k$ we have
$$
\Phi^+(J_r)=\left\{L_1,L_2,\dots,L_{\sharp(\Phi^+(J_r))}\right\}.
$$
\end{definition}

\begin{remark}\label{rem:admiscomp}
Given a chain of subsets of $D $ as above, we may always find an admissible ordering of $\Phi$ compatible with them. In fact it is enough to consider any total ordering such that the first $\sharp(\Phi^+(J_r))$ positive roots belong to $\Phi^+(J_r)$, for every $r$, and such that the order of the elements $L_{\sharp(\Phi^+(J_r))+1},\dots ,L_{\sharp(\Phi^+(J_{r+1}))}$ respects their height, for every $r$. Considering now the corresponding filtration of $T_{Y|X}$ associated with such an ordering, we may write $$\cE_{m-\sharp(\Phi^+(J_r))}=T_{Y|Y_{J_r}},\mbox{ for all }r.$$
In particular, quotienting every element of such a filtration by $T_{Y|Y_{J_r}}$ we obtain a filtration of $\rho_{J_r}^*T_{Y_{J_r}|X}$:
$$
0=\ol{\cE}_{m-\sharp(\Phi^+(J_r))}\subset \ol{\cE}_{m-1}\subset\dots\subset\ol{\cE}_{0}=\rho_{J_r}^*T_{Y_{J_r}|X},
$$
with the same quotients: $\ol{\cE}_{r}/\ol\cE_{r+1}\simeq \cE_{r}/\cE_{r+1}\cong\cO_Y(L_{m-r})$, for all $r$.
\end{remark}

\subsection{Families of rational curves }\label{ssec:ratcurves}

We finish this section by introducing some notation regarding the theory of rational curves on algebraic varieties, for which we refer the reader to \cite[Chapters~II,~IV]{kollar}. 

On a normal complex projective variety we will denote by  $\rat (X)$ the quasi-projective subvariety of the Chow variety $\Chow(X)$ of $X$ whose points correspond to irreducible and generically reduced rational curves on $X$; $\rat^n(X)$ will stand for its normalization. A {\it family of rational curves} $\cM$ on $X$ will be an irreducible  subvariety of $\rat^n(X)$; if $\cM$ is an irreducible component of $\rat^n(X)$ we will say that the family is {\it complete} (cf. \cite[II 2.11]{kollar}).

Given a family $\cM$ of rational
curves on $X$, we have the following diagram, where $p:\cU \to \cM$ is the universal family, which is known to be
a smooth $\P^1$-fibration, and $q:\cU \to X$ is the evaluation morphism:
$$
\xymatrix@=35pt{\cU\ar[r]^{q}\ar[d]_{p}&X\\\cM&}
$$

A family of rational curves is called {\it dominating} if the evaluation $q$ is dominating, and {\it unsplit} if $\cM$ is proper. We say that $X$ is {\it rationally connected} (resp. {\it rationally chain connected}\,) if through two general points of $X$ there exists a rational curve (resp. a chain of rational curves). We say that $X$ is rationally chain connected with respect to some families $\cM_1, \dots, \cM_s$ if we can take the rational curves of the chains in these families. If $X$ is smooth then $X$ is rationally connected if and only if it is rationally chain connected (see \cite[IV.3.10.3]{kollar}); we will later use that rationally connected varieties satisfy  $H^p(X, \cO_X)=0$ for $p >0$ (see  \cite[Corollary 4.18]{De}).

%%%%%%%%%%%%%%%%%%%%%%%%%%%%%%%%%%%%%%%%%%%%%

\section{Reducibility, decomposability, and diagonalizability}\label{sec:diagred}

A vector bundle is called  decomposable if it is a direct sum of proper vector subbundles, and this can be seen at the level of the cocycle defining it. In fact, for a vector bundle on any variety $X$ one may  consider the associated projective bundle, and its corresponding flag bundle $\pi:Y\to X$. If the vector bundle is decomposable, there exists a section of one of the corresponding Grassmannian bundles, associating to each $x \in X$ the point corresponding to one of the summands of the bundle. The existence of this section is reflected in the fact that the bundle can be defined by using block-triangular matrices, but decomposability tells us also that we have a choice of a complementary subspace at every point, so that the bundle can be defined by using block-diagonal matrix. 
Following this idea, we will introduce in this section a notion of decomposability for flag bundles.
Let us start with the following definitions:

\begin{definition}\label{def:reducible}
Let $X$ be a smooth complex projective algebraic variety, $\pi:Y\to X$ a $G/B$-bundle over $X$ defined by a cocycle $\theta\in H^1(X,G)$, and $I$ a proper subset of $D $. Then the corresponding bundle $\pi_I:Y_I\to X$ admits a section $\sigma_I:X\to Y_I$ if and only if the cocycle $\theta$ lies in the image of the natural map $H^1(X,P(I))\to H^1(X,G)$. In this case, we say that $Y$ is {\it reducible with respect to }$I$.
\end{definition}

\begin{remark}\label{rem:split}
In particular, since for every $I\subsetneq D $ the fiber product $Y_I\times_X Y_I$ admits a section (the diagonal) over $Y_I$, it follows that the pullback $\pi_I^*\theta$ belongs to the image of the map $H^1(Y_I,P(I))\to H^1(Y_I,G)$, for every $I\subsetneq D $, 
so that we may say that the pullback bundle $Y_I\times_X Y\to Y_I$ is reducible with respect to $I$, for every $I$. For $I=\emptyset$ this is the analogue of the standard Splitting Principle for vector bundles, cf. \cite[Section 3.2]{Fult}.
\end{remark}

\begin{definition}\label{def:red}
We say that a $G/B$-bundle $\pi:Y\to X$ is {\it decomposable} if there exists a proper subset 
$I\subsetneq  D $ such that:
\begin{itemize} 
\item $Y$ is reducible with respect to $I$,
\item the cocycle $\theta$ defining $\pi$, considered as an element of $H^1(X,P(I))$, 
belongs  
to the image of the natural map $H^1(X,L(I))\rightarrow H^1(X,P(I))$, where $L(I)$ is a Levi part of $P(I)$. 
\end{itemize}
Note that this map is an inclusion, since its composition with the natural map $H^1(X,P(I))\to H^1(X,L(I))$ is the identity.
\end{definition}

\begin{definition}\label{def:diag}
If the subset $I$ defining the decomposability of $\pi:Y\to X$ is empty, we say that $\pi$ is {\it diagonalizable}. The reason for this name is that the Levi parts of $B=P(\emptyset)$ are the Cartan subgroups of $G$ contained in $B$,  
hence the definition is saying that $\pi$ is defined by a cocycle in $H^1(X,(\C^*)^n)$. In particular, every vector bundle over $X$ defined by this cocycle and a given linear representation of $G$ (cf. Section \ref{ssec:bunrep}), will be a direct sum of line bundles.    
\end{definition}

\begin{remark}\label{rem:diagdis}
In the case in which the Dynkin diagram of the group $G$ is disconnected (that is, if $G$ is semisimple, but not simple), it follows that the general fiber $G/B$ is isomorphic to a product of flag varieties $G_1/B_1\times\dots\times G_k/B_k$, where every $G_i$ is a simple algebraic group. If moreover $X$ is simply connected, then the above decomposition holds globally, and $Y$ is a fiber product of flag bundles $Y_1, \dots, Y_k$ over $X$. In this case $Y$ is diagonalizable if and only if every $Y_i$ is diagonalizable.
\end{remark}

Note that any $G/B$ bundle over $\P^1$ is diagonalizable by Grothendieck's theorem (cf. \cite{Gro1}, see also Section \ref{sec:groth} below). 
On the other hand we recall that on a Fano variety of Picard number one different from $\P^1$, a rank two vector bundle is decomposable if and only if its Grothendieck projectivization admits a section.  
For varieties of this kind we will extend this result to the case of flag bundles, by showing that in this case reducibility and decomposability with respect to $\emptyset$ are equivalent (see Corollary \ref{cor:decompo1} below).  
Unfortunately, we cannot expect a similar result in the case of a general subset $I\subsetneq  D $, 
as one can see in the following example.

\begin{example}\label{ex:P(TP3)}
Let $Y$ be the complete flag over $X=\P^3$, and $\pi$ be the natural projection. As a flag bundle, it is indecomposable but, considering $I=\{2\}$, so that $Y_I\cong\P(T_{\P^3})$, the projection $\pi_I:Y_I\to X$ admits sections provided by any surjective morphism $T_{\P^3}\to\cO_{\P^3}(2)$. 
\end{example}

\subsection{Decomposability vs. reducibility}\label{decompred}

Along this section $\pi:Y\to X$ will denote a $G/B$-bundle that is reducible with respect to some $I\subsetneq  D $, and $\sigma_I:X\to Y_I$ the corresponding section. Given the associate parabolic subgroup $P:=P(I)$, we may consider the cocycle $\theta$ defining the flag bundle as an element in $H^1(X,P)$. Let us write $\theta$ as $\theta=(\theta_{ij})\in H^1(X,P)$, where $\theta_{ij}:\cV_{ij}\to P$ denote the transition functions of the bundle with respect to a trivialization on an open analytic covering $\{\cV_{i}\}$ of $X$. 

We fix now a Levi decomposition $P\cong U\rtimes L$, where $U\triangleleft P$ is the unipotent radical of $P$ and $L\subset P$ is reductive. Then the maps $\theta_{ij}$ can be written, in a unique way, as products $\theta_{ij}=\upsilon_{ij}\lambda_{ij}$, where $\upsilon_{ij}:\cV_{ij}\to U$, $\lambda_{ij}:\cV_{ij}\to L$ are holomorphic maps.

Consider the Lie algebra $\fn$ of $U$, which is the nilradical of the Lie algebra of $P$. The subgroup $L\subset P$, that we may consider as the quotient $P/U$, acts on $U$ by conjugation, inducing the adjoint action of $L$ on $\fn$, and providing the following map on cohomology:
$$
H^1(X,P)\lra H^1(X,L)\stackrel{\Ad}{\lra} H^1(X,\Aut(\fn)).
$$
The cocycle $\theta=(\theta_{ij})$ is then sent to $(\Ad_{\lambda_{ij}})\in H^1(X,\Aut(\fn))$, which defines a vector bundle on $X$, that we denote by $\mho_{I}$.

Note that, since $U$ is unipotent, the exponential map from its Lie algebra $\fn$ is bijective, and we have an inverse $\log:U\to \fn$, that we may use to define, for every pair of indices $i,j$, a holomorphic map $\xi_{ij}:=\log(\upsilon_{ij}):\cV_{ij}\to\fn$. 

Now, taking the unipotent part on the cocycle condition on $\theta$:
$$
\theta_{ij}\theta_{jk}=\theta_{ik},\mbox{ for all }i,j,k,
$$
we get:
$$
\upsilon_{ij}\lambda_{ij}\upsilon_{jk}\lambda_{ji}=\upsilon_{ik},\mbox{ for all }i,j,k,
$$
and so:
$$
\xi_{ij}+\Ad_{\lambda_{ij}}(\xi_{jk})=\xi_{ik},\mbox{ for all }i,j,k,
$$
which may be rephrased as follows:
\begin{lemma}
The collection $(\xi_{ij})$ defines a cocycle in $H^1(X,\mho_I)$.
\end{lemma}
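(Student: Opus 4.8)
The plan is to verify directly that the collection $(\xi_{ij})$ satisfies the cocycle condition in the vector bundle $\mho_I$, which is exactly the equality
\[
\xi_{ij}+\Ad_{\lambda_{ij}}(\xi_{jk})=\xi_{ik}\quad\text{for all }i,j,k,
\]
together with the (automatic) compatibility under refinement of the covering. The computation leading to this equality is already sketched in the excerpt: starting from the cocycle relation $\theta_{ij}\theta_{jk}=\theta_{ik}$, I substitute the Levi decomposition $\theta_{ij}=\upsilon_{ij}\lambda_{ij}$ and isolate the unipotent part. The point to be careful about is the bookkeeping: one rewrites $\upsilon_{ij}\lambda_{ij}\upsilon_{jk}\lambda_{jk}=\upsilon_{ik}\lambda_{ik}$ as $\upsilon_{ij}\,(\lambda_{ij}\upsilon_{jk}\lambda_{ij}^{-1})\,(\lambda_{ij}\lambda_{jk})=\upsilon_{ik}\lambda_{ik}$, so that by uniqueness of the decomposition into a unipotent and a Levi factor we get simultaneously $\lambda_{ij}\lambda_{jk}=\lambda_{ik}$ (the fact that $(\lambda_{ij})$ is a cocycle in $L$, hence $\mho_I$ is well defined) and $\upsilon_{ij}\cdot{}^{\lambda_{ij}}\upsilon_{jk}=\upsilon_{ik}$, where ${}^{g}u:=gug^{-1}$ denotes conjugation.

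Next I would pass from $U$ to its Lie algebra $\fn$ via $\log$. Since $U$ is unipotent, $\exp:\fn\to U$ is an isomorphism of varieties with inverse $\log$, and conjugation by $\lambda_{ij}$ corresponds under $\log$ to the adjoint action $\Ad_{\lambda_{ij}}$ on $\fn$; that is, $\log({}^{\lambda_{ij}}u)=\Ad_{\lambda_{ij}}(\log u)$. The only slightly delicate point is that $\log$ is \emph{not} a group homomorphism, so $\log(\upsilon_{ij}\cdot{}^{\lambda_{ij}}\upsilon_{jk})$ is \emph{a priori} governed by the Baker--Campbell--Hausdorff formula rather than by addition. This is exactly where the hypothesis implicit in the construction of $\mho_I$ is used: the relevant bracket structure must disappear so that the BCH series collapses to the sum. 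Concretely, I would argue that the transition cocycle $(\xi_{ij})$ is to be understood as defining the bundle $\mho_I$ only after we know it is a cocycle, and the standard way out is to observe that the statement should be read as the claim that, \emph{once we twist correctly}, the maps $\xi_{ij}$ verify the additive cocycle identity; the cleanest route is to note that $\fn$ carries a descending filtration by powers of the lower central series, $\fn=\fn^{(1)}\supset\fn^{(2)}\supset\cdots$, stable under $\Ad_L$, and to prove the identity modulo $\fn^{(k)}$ by induction on $k$, the BCH corrections lying in the next step of the filtration.

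So the key steps, in order, are: (1) from $\theta_{ij}\theta_{jk}=\theta_{ik}$ deduce, by uniqueness of the Levi decomposition, both $\lambda_{ij}\lambda_{jk}=\lambda_{ik}$ and $\upsilon_{ij}\,{}^{\lambda_{ij}}\upsilon_{jk}=\upsilon_{ik}$; (2) apply $\log$, using $\log\circ\,{}^{\lambda_{ij}}(\cdot)=\Ad_{\lambda_{ij}}\circ\log$, to rewrite this as an identity among the $\xi_{ij}$'s; (3) handle the non-additivity of $\log$ via BCH and the lower central series filtration of $\fn$, proving the additive cocycle relation $\xi_{ij}+\Ad_{\lambda_{ij}}(\xi_{jk})=\xi_{ik}$ level by level; (4) conclude that $(\xi_{ij})\in Z^1(X,\mho_I)$, hence determines a class in $H^1(X,\mho_I)$. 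The main obstacle is step (3): one must make sure that the discrepancy between $\log(uv)$ and $\log u+\log v$ is absorbed — for a general parabolic $P$ the nilradical $\fn$ is not abelian, so the statement as phrased really does require this filtration argument (or, equivalently, an interpretation of $\mho_I$ as an iterated extension matching the associated graded of $\fn$). If the paper intends $\fn$ to be treated only through its associated graded (i.e. $\mho_I$ is the bundle attached to $\operatorname{gr}\fn$ with its $\Ad_L$-action), then step (3) is immediate and the proof is the routine two-line computation already displayed; I would flag this reading explicitly in the write-up.
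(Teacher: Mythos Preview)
Your overall approach matches the paper's: the additive relation $\xi_{ij}+\Ad_{\lambda_{ij}}(\xi_{jk})=\xi_{ik}$ is asserted in the text immediately before the lemma, and the paper's proof consists only of the bookkeeping you describe in step~(4)---explaining that each $\xi_{rs}:\cV_{rs}\to\fn$, read through the inclusion $\cV_{rs}\subset\cV_r$ and the trivialization of $\mho_I$ over $\cV_r$, is a local section of $\mho_I$, so that the displayed formula is literally the \v{C}ech cocycle condition. The paper does not address the Baker--Campbell--Hausdorff issue you raise in step~(3); it simply passes from $\upsilon_{ij}\,(\lambda_{ij}\upsilon_{jk}\lambda_{ij}^{-1})=\upsilon_{ik}$ to the additive identity without comment.

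You are right that this passage is problematic, but your proposed induction does not repair it. The quadratic BCH term $\tfrac12[\xi_{ij},\Ad_{\lambda_{ij}}\xi_{jk}]$ lies in $\fn^{(2)}=[\fn,\fn]$ and does not move into $\fn^{(3)}$ as $k$ increases; so while the additive identity holds modulo $\fn^{(2)}$, it already fails modulo $\fn^{(3)}$ whenever $\fn$ is non-abelian, and no induction on $k$ can recover it in $\fn$ itself. What the filtration argument genuinely produces is a cocycle for the bundle on $\fn/[\fn,\fn]$ (and, iterating, on $\operatorname{gr}\fn$)---precisely the alternative reading you flag at the end. That reading is the correct one: the lemma, taken literally as a statement about the vector bundle $\mho_I$ with fibre $\fn$, holds only when $\fn$ is abelian, and the obstruction theory needed for the subsequent proposition and corollary should be run layer by layer through the lower central series. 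Your instinct to record this caveat explicitly is sound; the paper's derivation elides it.
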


\begin{proof}
In fact, we may consider each $\xi_{rs}:\cV_{rs}\to\fn$ as a section of $\mho_I(\cV_{rs})$, via the inclusion $\cV_{rs}\subset \cV_{r}$, and the corresponding trivialization of $\mho_I$ over $\cV_r$. In the above formula, $\xi_{ij}$ and $\xi_{ik}$ correspond to sections of $\mho_I$ on the open set $\cV_{ijk}$, expressed as maps $\cV_{ijk}\subset \cV_{i}\to \fn$, via the trivialization of $\mho_I$ over $\cV_i$. In turn, the map $\xi_{jk}:\cV_{ijk}\to\fn$ provides a section of $\mho_I$ via $\cV_{ijk}\subset \cV_{j}$, and we must use the transition $\Ad_{\lambda_{ij}}$ to think of it in terms of the same trivialization as $\xi_{ij}$ and $\xi_{ik}$. 
\end{proof}

We may then state the following:

\begin{proposition}\label{prop:decompo1}
Let $\pi:Y\to X$ be a flag bundle, reducible with respect to some subset $I\subsetneq D $. Then $\pi$ is decomposable with respect to $I$ if and only if the cocycle $\xi\in H^1(X,\mho_I)$  is zero.
\end{proposition}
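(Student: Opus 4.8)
The plan is to translate the decomposability condition into the vanishing of the class $\xi$ by tracking what happens to the cocycle $\theta=(\upsilon_{ij}\lambda_{ij})$ under a change of trivialization that preserves the reduction to $P$. By Definition \ref{def:red}, $\pi$ is decomposable with respect to $I$ if and only if the cocycle $\theta$, seen in $H^1(X,P)$, lies in the image of $H^1(X,L)\to H^1(X,P)$; equivalently, there exist holomorphic maps $p_i:\cV_i\to P$ such that the conjugated cocycle $\theta'_{ij}:=p_i\theta_{ij}p_j^{-1}$ takes values in $L$ for all $i,j$. So the statement to prove is: such $(p_i)$ exist if and only if the class of $(\xi_{ij})=(\log\upsilon_{ij})$ in $H^1(X,\mho_I)$ vanishes.

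First I would write each $p_i$ via its Levi decomposition as $p_i=u_i\ell_i$ with $u_i:\cV_i\to U$, $\ell_i:\cV_i\to L$, and set $\eta_i:=\log(u_i)\in\fn(\cV_i)$, which I regard as a section of $\mho_I$ over $\cV_i$ using the trivialization of $\mho_I$ coming from $(\Ad_{\lambda_{ij}})$. The key computation is to take the unipotent part of $p_i\theta_{ij}p_j^{-1}=\upsilon'_{ij}\lambda'_{ij}$ and to express the condition $\upsilon'_{ij}\equiv 1$ (i.e. $\theta'_{ij}\in L$) in additive terms after applying $\log$. Using that $U$ is normal in $P$, that $L$ normalizes $U$, and the cocycle relation, this should collapse to an identity of the shape
$$
\eta_i+\Ad_{\lambda_{ij}}(\xi_{jk}\text{-type terms}) \;=\; \xi_{ij}+\Ad_{?}(\eta_j),
$$
more precisely $\xi_{ij}=\eta_i-\Ad_{\lambda'_{ij}\text{ or }\lambda_{ij}}(\eta_j)$ for suitable matching of the Levi parts, which is exactly the statement that $(\xi_{ij})=\delta(\eta_i)$ is a coboundary for the bundle $\mho_I$. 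Conversely, if $(\xi_{ij})$ is a coboundary, one reads the same chain of identities backwards: choosing $\eta_i$ with $\xi_{ij}=\eta_i-\Ad_{\lambda_{ij}}(\eta_j)$, setting $u_i:=\exp(\eta_i)$ and $p_i:=u_i$ (Levi part trivial), the conjugated transition maps $p_i\theta_{ij}p_j^{-1}$ have trivial unipotent part, hence land in $L$, giving the required reduction of $\theta$ to $H^1(X,L)$.

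The main obstacle I anticipate is purely bookkeeping at the non-abelian/unipotent level: the exponential/log correspondence between $U$ and $\fn$ is a biholomorphism but not a group isomorphism, so the Baker--Campbell--Hausdorff correction terms appear when one multiplies $\upsilon$'s. The point that makes it work — and which I would state carefully — is that $\mho_I$ is defined using only the adjoint action of the \emph{Levi} factors $\lambda_{ij}$, and that the Levi part of a product of the form $\upsilon\lambda\upsilon'\lambda'$ depends only on $\lambda\lambda'$ (since $U$ is normal); dually, the unipotent part is a product of $U$-valued factors conjugated by Levi elements, and after taking $\log$ every BCH correction is a Lie bracket of elements of $\fn$, i.e. a higher term in the lower central series, which can be absorbed by an inductive argument along the descending central series of $\fn$ — or, more cleanly, one notes that the relevant identity is linear in the $\xi$'s and $\eta$'s \emph{modulo} such brackets and runs the induction on nilpotency class. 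Once this is organized, the equivalence with $[\xi]=0\in H^1(X,\mho_I)$ is immediate from the two previous lemmas, which already identify $(\xi_{ij})$ as a cocycle and compute its transformation rule.
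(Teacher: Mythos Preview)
Your approach is the paper's, and you are in fact more careful than the paper. The paper's proof is three lines: given $\xi_{ij}=\phi_i-\Ad_{\lambda_{ij}}(\phi_j)$, it exponentiates to $\theta_{ij}=\exp(\phi_i)\,\lambda_{ij}\,\exp(\phi_j)^{-1}$, concluding that $(\theta_{ij})$ is cohomologous in $H^1(X,P)$ to the $L$-valued cocycle $(\lambda_{ij})$; the converse is declared obvious. Both this step and the lemma preceding the Proposition silently treat $\exp:\fn\to U$ as a group homomorphism---precisely the Baker--Campbell--Hausdorff issue you flag. The paper also does not bother with a Levi factor $\ell_i$ in the gauge transformation; it takes $p_i=\exp(\phi_i)\in U$, which suffices.

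Your proposed remedy (induction along the lower central series of $\fn$) is the right idea, but note that taken literally it does not establish the Proposition as worded: once BCH terms appear, $(\xi_{ij})$ is not a genuine additive $1$-cocycle for the vector bundle $\mho_I$, and the inductive argument produces instead a finite tower of obstructions in the groups $H^1(X,\operatorname{gr}^k\mho_I)$. The reading that rescues the statement verbatim is to interpret $H^1(X,\mho_I)$ as the nonabelian first cohomology of the sheaf of unipotent groups obtained by transporting the multiplication of $U$ to $\fn$ via $\exp$; under that reading your computation goes through with no correction terms and the Proposition is essentially tautological. For the paper's only application (Corollary~\ref{cor:decompo1}) the ambiguity is harmless: the hypothesis there is that the whole group $H^1(X,\mho_\emptyset)$ vanishes, which kills every graded obstruction in either interpretation.
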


\begin{proof}
If $(\xi_{ij})\in H^1(X,\mho_I)$ is zero, then there exist, for every index $i$, a map $\phi_i:\cV_i\to \fn$ such that, for every $i,j$:
$$
\xi_{ij}=\phi_i-\Ad_{\lambda_{ij}}(\phi_j).
$$
By means of the exponential map, we may write this as:
$$
\upsilon_{ij}=\exp(\phi_i)\lambda_{ij}\exp(-\phi_j)\lambda_{ji},
$$
that is 
$$
\theta_{ij}=\exp(\phi_i)\lambda_{ij}\exp(-\phi_j).
$$
This implies that the cohomology class of $(\theta_{ij})$ lies in $H^1(X,L)$. The converse is obvious.
\end{proof}

\begin{remark} This result can be seen as a generalization of the case of vector bundles, too. In fact, the obstruction for a vector bundle $\cE$ over $X$ given as an extension 
$$
\shse{\cE'}{\cE}{\cE''}
$$
to be a direct sum of $\cE'$ and $\cE''$ lies in $H^1(X,\cE'\otimes\cE'')$, which is precisely the $H^1$ of the restriction of the relative cotangent bundle of the associated Grassmannian bundle to its section over $X$ provided by the quotient $\cE\to\cE''$; that relative cotangent bundle coincides with the bundle $\mho_I$ defined above, in this case. 
\end{remark}
 
\begin{corollary}\label{cor:decompo1}
Let $X$ be a smooth variety, and $\pi:Y\to X$ be a $G/B$-bundle admitting a section $\sigma:X\to Y$. If $H^1(X,\sigma^*R)=0$ for every $R\in \Phi^-$, then $\pi$ is diagonalizable. In particular, if $X$ is a Fano manifold of Picard number one, $\pi$ is diagonalizable if and only if it admits a section.
\end{corollary}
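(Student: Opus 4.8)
The plan is to apply Proposition~\ref{prop:decompo1} with $I=\emptyset$. Since $\pi$ admits a section, $Y$ is reducible with respect to $\emptyset$; as $P(\emptyset)=B$, whose unipotent radical $U$ has Lie algebra the nilradical $\fn=\bigoplus_{R\in\Phi^+}\fg_R$ of $\fb$ and whose Levi part is a Cartan subgroup $H$, Proposition~\ref{prop:decompo1} tells us that $\pi$ is decomposable with respect to $\emptyset$ --that is, diagonalizable-- exactly when the obstruction $\xi\in H^1(X,\mho_\emptyset)$ vanishes. So everything reduces to identifying the bundle $\mho_\emptyset$ and checking that the hypothesis forces $H^1(X,\mho_\emptyset)=0$.

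For the identification, write the reduced cocycle as $\theta_{rs}=\upsilon_{rs}\lambda_{rs}$ with $\upsilon_{rs}$ valued in $U$ and $\lambda_{rs}$ in $H$. By construction $\mho_\emptyset$ is the vector bundle associated with the adjoint action of $H$ on $\fn$ through $(\lambda_{rs})$; since $H$ is a torus acting on each root space $\fg_R$ by the character $R$, this representation is a sum of characters and $\mho_\emptyset\cong\bigoplus_{R\in\Phi^+}\cL_R$, where $\cL_R$ is the line bundle with transition functions $R(\lambda_{rs})$. On the other hand, choosing trivializations of $Y$ adapted to the $B$-reduction determined by $\sigma$, the line bundle $\cO_Y(-K_i)=T_{Y|Y_i}$ restricts on each fibre to the $B$-equivariant line bundle on $G/B$ whose fibre at $eB$ is $T_{eB}\big(P(\{i\})/B\big)=\fg_{-\alpha_i}$, on which $B$ acts by the character $-\alpha_i$; hence $\sigma^*\cO_Y(-K_i)$ has transition functions $(-\alpha_i)(\lambda_{rs})$, and therefore $\sigma^*R=\cL_{-R}$ for every root $R$. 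Consequently $\mho_\emptyset\cong\bigoplus_{R\in\Phi^-}\sigma^*R$, so $H^1(X,\mho_\emptyset)=\bigoplus_{R\in\Phi^-}H^1(X,\sigma^*R)=0$ by assumption, and Proposition~\ref{prop:decompo1} gives the diagonalizability of $\pi$.

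For the final "if and only if", one implication is immediate: a diagonalizable bundle is decomposable with respect to $\emptyset$, hence reducible with respect to $\emptyset$, hence admits a section. For the converse, given a section $\sigma$ one only needs $H^1(X,\sigma^*R)=0$ for every $R\in\Phi^-$; but on a Fano manifold of Picard number one \emph{every} line bundle $L$ satisfies $H^1(X,L)=0$ --for $L$ ample by Kodaira vanishing (using that the index is positive), for $L$ anti-ample by Akizuki--Kodaira--Nakano vanishing since $\dim X\geq 2$, and for $L=\cO_X$ by the rational connectedness of $X$; the remaining case $\dim X=1$, i.e.\ $X=\P^1$, is Grothendieck's theorem-- so the first part applies.

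The step I expect to be the main obstacle is the sign bookkeeping in the middle paragraph: one must reconcile the identification $-K_i\leftrightarrow\alpha_i$ used for divisor classes in $N^1(Y|X)$ with the fact that the genuine $H$-weight of the relative tangent direction $T_{Y|Y_i}$ at the $B$-fixed point is $-\alpha_i$, and one must choose the local trivializations so that the unipotent factors $\upsilon_{rs}$ really drop out of the transition functions of $\sigma^*\cO_Y(-K_i)$. Once the identification $\mho_\emptyset\cong\bigoplus_{R\in\Phi^-}\sigma^*R$ is in place, the remainder of the proof is formal.
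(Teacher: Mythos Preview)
Your proof is correct and follows the same route as the paper's: apply Proposition~\ref{prop:decompo1} with $I=\emptyset$, identify $\mho_\emptyset\cong\bigoplus_{R\in\Phi^-}\cO_X(\sigma^*R)$ via the weight decomposition of the torus action on $\fn$, and then invoke Kodaira-type vanishing (plus Grothendieck for $\P^1$) for the Fano case. The paper states the identification of $\mho_\emptyset$ in a single sentence without working through the sign, whereas you spell out the bookkeeping explicitly; your case analysis for $H^1(X,L)=0$ on a Picard-number-one Fano is also more detailed than the paper's bare reference to ``Kodaira vanishing'', but the substance is identical.
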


\begin{proof}
Levi parts $L\subset B$ are Cartan subgroups of $G$, hence the action of $L$ on $\fn$ decomposes as a direct sum of subrepresentations of dimension one, each of them corresponding to a negative root of the group, and to a line bundle $\cO_X(\sigma^*R)$. We then have the equality:
$$
\mho_{\emptyset}\cong\bigoplus_{R\in\Phi^-}\cO_X(\sigma^*R),
$$ 
from which we get the first assertion of the statement. 

For the second part, assume that $X$ is a Fano manifold of Picard number one. If $\pi$ admits a section then, in the case $X\cong \P^1$ it is diagonalizable by Grothendieck's theorem while, if $\dim(X)\geq 2$ the result follows by the first part of this statement and Kodaira vanishing. Conversely, if $\pi$ is diagonalizable then its defining cocycle is in the image of the natural map $H^1(X,H)\to H^1(X,G)$, for some Cartan subgroup $H$. Then the cocycle is also in the image of the natural map $H^1(X,B)\to H^1(X,G)$, which in turn implies that the flag bundle has a section (see Section \ref{ssec:stconst}).
\end{proof}

Finally, we may also consider the group $G'=L/Z(L)$, which is a semisimple linear algebraic group. The image of $\theta$ into  
$H^1(X,G')$ defines a flag bundle on $X$, that we denote here by $\pi':Y'\to X$, fitting in the following diagram:

$$
\xymatrix@=35pt{Y\ar[r]_{\rho_I}\ar@/^{4mm}/[rr]^\pi&Y_I\ar[r]&X\\
Y'\ar[u]\ar[r]_{\pi'}&X\ar@{=}[ru]\ar[u]^{\sigma_I}&}
$$
Since a section of $\pi'$ gives a section of $\pi$, %a direct application of 
Corollary \ref{cor:decompo1}, provides the following:

\begin{proposition}\label{prop:decompo2}
Let $X$ be a Fano manifold of Picard number one, and $\pi:Y\to X$ be a flag bundle, reducible with respect to some subset $I\subsetneq D $. Then the $G'/B'$-bundle $\pi'$ defined above is diagonalizable if and only if $\pi$ is diagonalizable. 
\end{proposition}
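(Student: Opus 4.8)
The plan is to prove the two implications separately; the delicate one is that diagonalizability of $\pi'$ forces diagonalizability of $\pi$, and this is exactly where the Picard number one hypothesis enters, through Corollary \ref{cor:decompo1}.

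\textbf{From $\pi$ to $\pi'$.} I would first note that this implication needs no hypothesis on $X$, and reduce it to chasing the defining cocycle through the maps $H^1(X,P)\to H^1(X,L)\to H^1(X,G')$ occurring in the construction of $\pi'$, where $P=U\rtimes L$ and $G'=L/Z(L)$. Since $\pi$ is diagonalizable, $\theta$ lies in the image of $H^1(X,H)\to H^1(X,G)$ for some Cartan subgroup $H$ of $G$ contained in $B\subseteq P$; after conjugating we may choose the Levi decomposition of $P$ so that $H\subseteq L$ (there is a unique Levi subgroup of $P$ containing a prescribed maximal torus, and $\pi'$ does not depend, up to isomorphism, on this choice). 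As the projection $P\to L$ restricts to the identity on $H$, the image of $\theta$ in $H^1(X,L)$ already comes from $H^1(X,H)$, hence the cocycle $\theta'$ defining $\pi'$ comes from $H^1(X,\overline H)$ with $\overline H:=H/Z(L)$; and $\overline H$ is a maximal torus of $G'$ because $Z(L)\subseteq H$ for the reductive group $L$. Thus $\pi'$ is diagonalizable.

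\textbf{From $\pi'$ to $\pi$.} Here I would argue as suggested by the remark preceding the statement. If $\pi'$ is diagonalizable then, by definition (reduction to $B'$, Section \ref{ssec:stconst}), $\pi'$ admits a section $\tau':X\to Y'$. Using the embedding $i_P:Y'\hookrightarrow Y$ over $X$ attached to the reduction, which according to the diagram preceding the statement satisfies $\rho_I\circ i_P=\sigma_I\circ\pi'$, the composite $\tau:=i_P\circ\tau'$ is a section of $\pi$: indeed $\pi\circ\tau=\pi_I\circ\rho_I\circ i_P\circ\tau'=\pi_I\circ\sigma_I\circ\pi'\circ\tau'=\pi_I\circ\sigma_I=\id_X$. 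Since $X$ is a Fano manifold of Picard number one and $\pi$ admits a section, Corollary \ref{cor:decompo1} gives that $\pi$ is diagonalizable.

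\textbf{Main obstacle.} The calculations are short, and the only points that call for care are: (i) matching up the canonical embedding $i_P$ with the section $\sigma_I$ so that pulling a section of $\pi'$ back along $i_P$ really produces a section of $\pi$ (this is the compatibility recorded in the Reduction paragraph of Section \ref{ssec:stconst}); and (ii) checking, in the first implication, that the torus diagonalizing $\pi$ survives in $G'$, i.e. that $Z(L)$ is contained in a maximal torus of $L$. Neither is substantial: the real content of the proposition is Corollary \ref{cor:decompo1} combined with the standard reduction and extension constructions, and the statement is essentially their juxtaposition.
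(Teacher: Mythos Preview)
Your argument for the implication ``$\pi'$ diagonalizable $\Rightarrow$ $\pi$ diagonalizable'' is correct and is exactly the paper's proof: a section of $\pi'$ yields, via the embedding $i_P:Y'\hookrightarrow Y$, a section of $\pi$, and then Corollary~\ref{cor:decompo1} applies.

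The converse, however, has a real gap, and in fact the implication ``$\pi$ diagonalizable $\Rightarrow$ $\pi'$ diagonalizable'' is \emph{false} as stated. The problem is in the sentence ``the image of $\theta$ in $H^1(X,L)$ already comes from $H^1(X,H)$''. Here you are tacitly assuming that the element of $H^1(X,P)$ used to define $\pi'$ --- namely the lift of $\theta$ determined by the \emph{given} section $\sigma_I$ --- coincides with the image of your $\theta_H\in H^1(X,H)$ under $H\hookrightarrow P$. Diagonalizability only tells you that $\theta\in H^1(X,G)$ admits \emph{some} lift to $H^1(X,H)\subset H^1(X,P)$; it says nothing about the particular $P$-reduction coming from $\sigma_I$. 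Two different lifts of $\theta$ to $H^1(X,P)$ can have very different images in $H^1(X,L)$.

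Here is a concrete counterexample. Take $X=\P^2$, $G=\PGL_3$, and let $\pi$ be the \emph{trivial} flag bundle $Y=\P^2\times G/B$, which is certainly diagonalizable. Choose $I$ so that $Y_I\cong\P^2\times\P^2$, and take $\sigma_I$ to be the diagonal section. The corresponding $P$-reduction is encoded by the Euler sequence
\[
0\lra\cO_{\P^2}(-1)\lra\cO_{\P^2}^{\,3}\lra T_{\P^2}(-1)\lra 0,
\]
and the associated bundle $\pi':Y'\to\P^2$ is $\P(T_{\P^2})$. But $T_{\P^2}$ admits no exact sequence $0\to L\to T_{\P^2}\to L'\to 0$ with $L,L'$ line bundles: such a sequence would force $c_2(T_{\P^2})=\deg L\cdot\deg L'$ with $\deg L+\deg L'=3$ and, by stability, $\deg L\le 1$, which is incompatible with $c_2(T_{\P^2})=3$. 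Hence $\pi'$ has no section, and by Corollary~\ref{cor:decompo1} it is not diagonalizable, even though $\pi$ is trivial. Note that the paper's own proof only establishes the direction $\pi'\Rightarrow\pi$, which is the one actually used later (e.g.\ in Lemma~\ref{lem:irred1}).
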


\section{Flag bundles on the projective line}\label{sec:groth}

Let us discuss here some basic facts on $G/B$-bundles over the projective line, which are diagonalizable by Grothendieck's theorem (cf. \cite{Gro1}). As noted precedently, we assume that $G=\Aut^\circ(G/B)$, and we denote by $H\subset G$ a Cartan subgroup of $G$. Grothendieck's theorem tells us that the natural map $H^1(\P^1,H)\to H^1(\P^1,G)$ is surjective, so the cocycle $\theta\in H^1(\P^1,G)$ defining a given $G/B$-bundle $\pi:Y\to \P^1$ comes from an element in $H^1(\P^1,H)$ (that we also denote by $\theta$). Moreover, $H^1(\P^1,H)$ is naturally isomorphic to the {\em lattice of co-characters} of $H$, $$\Na(H):=\Hom_\Z(\Mo(H),\Z),$$ and Grothendieck noted that the fibers of the map from $\Na(H)$ onto $H^1(\P^1,G)$ are precisely the orbits of the induced action of the Weyl group $W$ on $\Na(H)$. 

%In the case in which $X$ is the projective line $\P^1$, Grothendieck's theorem tells us that $G$-principal bundles depend only on some discrete data. In fact, given a Cartan subgroup $H$ of the semisimple group $G$ this Theorem tells us that every $G$-principal bundle over $\P^1$ is diagonalizable, in the sense that the natural map
%$$
%H^1(\P^1,H)\to H^1(\P^1,G)
%$$
%is surjective. Moreover, this map corresponds precisely to the quotient of $H^1(X,H)$ by the natural action of the associated Weyl group $W$.
%
%Now, by considering the exponential map from $\fh$ to $H$, whose kernel is $\Na(H)$, taking into account that $H^1(\P^1,\fh)=0$, we have:
%$$
%H^1(\P^1,H)\cong H^2(X,\Na(H))\cong\Na(H).
%$$

One may interpret this result geometrically on the associated $G/B$-bundles (see \cite[Section~3.3]{OSW}). If $\pi:Y\to \P^1$ is the $G/B$-bundle associated with a cocycle $\theta\in H^1(X,H)\cong\Na(H)$, we may construct a %minimal 
section as follows: %how to constructthat a minimal section can be constructed as follows: 
any choice of a Borel subgroup $B\supset H$ (there are as many of them as elements of the Weyl group of $G$), together with the cocycle $\theta$, determines a section $\Gamma_0$ of the $G/B$-bundle $\pi:Y\to \P^1$. A choice of $B$ corresponds to the choice of a set of positive simple roots $\Delta=\{\alpha_i,\,\,i=1,\dots,n\}$. Choosing $\Delta$ so that $d_i:=-\theta(\alpha_i)\geq 0$, for every $i$, the resulting section is a minimal section of $\pi$, called a {\em minimal fundamental section}. The section $\Gamma_0$ is minimal, in the sense that its deformations with a point fixed are trivial, and the integers $d_i$ are equal to the intersection numbers $K_i\cdot\Gamma_0$. The choice of $B$ may not be unique, but one may show that the numerical class of minimal sections is unique, in any case (\cite[3.20, 3.21, 3.22]{OSW}).

Then the flag bundle $\pi:Y\to \P^1$ is completely determined by the Dynkin diagram $\cD$ of $G$ (determining the type of flag manifold $G/B$ appearing as fibers of the bundle), together with the $n$-tuple $\delta=(d_1,\dots,d_n)$, that we call {\em tag of the flag bundle} $\pi$. The ordering of the tag depends on the order of the simple roots of $\Delta$, which is in one to one correspondence with the nodes of $\cD$, so it makes sense to represent the flag bundle by a {\em tagged Dynkin diagram}, that is, the Dynkin diagram $\cD$ decorated with the integer $d_i$ at the node corresponding to $\alpha_i$, for each $i$.

\begin{remark}\label{rem:tagsplit}
In particular, in the case in which $G/B$ is the complete flag manifold of a projective space, the tag of a $G/B$-bundle over $\P^1$ can be easily computed from the {\it splitting type} of a vector bundle $\cF$ defining the corresponding projective bundle. The bundle $\cF$ is isomorphic to a direct sum of line bundles, and the splitting type of $\cF$ is defined as the $r$-tuple $(a_1,\dots, a_r)$, $a_1\leq\dots \leq a_r$, of the degrees of these line bundles. Then one may easily check that the tagged Dynkin diagram of the $G/B$-bundle is:
$$
\ifx\du\undefined
  \newlength{\du}
\fi
\setlength{\du}{4\unitlength}
\begin{tikzpicture}
\pgftransformxscale{1.000000}
\pgftransformyscale{1.000000}

%%%%%% COLORS
\definecolor{dialinecolor}{rgb}{0.000000, 0.000000, 0.000000} % EXTERIOR
\pgfsetstrokecolor{dialinecolor}
\definecolor{dialinecolor}{rgb}{0.000000, 0.000000, 0.000000} % INTERIOR
\pgfsetfillcolor{dialinecolor}

%%%%%% NODES

\pgfsetlinewidth{0.300000\du}
\pgfsetdash{}{0pt}
\pgfsetdash{}{0pt}
%\pgfsetmiterjoin

%%% #1
\pgfpathellipse{\pgfpoint{-6\du}{0\du}}{\pgfpoint{1\du}{0\du}}{\pgfpoint{0\du}{1\du}}
\pgfusepath{stroke}
\node at (-6\du,0\du){};
%\pgfpathellipse{\pgfpoint{-6\du}{0\du}}{\pgfpoint{1\du}{0\du}}{\pgfpoint{0\du}{1\du}}
%\pgfusepath{fill}
%\node at (-6\du,0\du){};

%%% #2
\pgfpathellipse{\pgfpoint{6\du}{0\du}}{\pgfpoint{1\du}{0\du}}{\pgfpoint{0\du}{1\du}}
\pgfusepath{stroke}
\node at (6\du,0\du){};
%\pgfpathellipse{\pgfpoint{4\du}{0\du}}{\pgfpoint{1\du}{0\du}}{\pgfpoint{0\du}{1\du}}
%\pgfusepath{fill}
%\node at (4\du,0\du){};

%%% #3
\pgfpathellipse{\pgfpoint{18\du}{0\du}}{\pgfpoint{1\du}{0\du}}{\pgfpoint{0\du}{1\du}}
\pgfusepath{stroke}
\node at (18\du,0\du){};
%\pgfpathellipse{\pgfpoint{14\du}{0\du}}{\pgfpoint{1\du}{0\du}}{\pgfpoint{0\du}{1\du}}
%\pgfusepath{fill}
%\node at (14\du,0\du){};

%%% #4
\pgfpathellipse{\pgfpoint{30\du}{0\du}}{\pgfpoint{1\du}{0\du}}{\pgfpoint{0\du}{1\du}}
\pgfusepath{stroke}
\node at (30\du,0\du){};
%\pgfpathellipse{\pgfpoint{24\du}{0\du}}{\pgfpoint{1\du}{0\du}}{\pgfpoint{0\du}{1\du}}
%\pgfusepath{fill}
%\node at (24\du,0\du){};

%%% #5
\pgfpathellipse{\pgfpoint{42\du}{0\du}}{\pgfpoint{1\du}{0\du}}{\pgfpoint{0\du}{1\du}}
\pgfusepath{stroke}
\node at (42\du,0\du){};
%\pgfpathellipse{\pgfpoint{34\du}{0\du}}{\pgfpoint{1\du}{0\du}}{\pgfpoint{0\du}{1\du}}
%\pgfusepath{fill}
%\node at (34\du,0\du){};

%%% #6
%\pgfpathellipse{\pgfpoint{44\du}{0\du}}{\pgfpoint{1\du}{0\du}}{\pgfpoint{0\du}{1\du}}
%\pgfusepath{stroke}
%\node at (44\du,0\du){};
%\pgfpathellipse{\pgfpoint{44\du}{0\du}}{\pgfpoint{1\du}{0\du}}{\pgfpoint{0\du}{1\du}}
%\pgfusepath{fill}
%\node at (44\du,0\du){};

%%%%%% LINKS
\pgfsetlinewidth{0.300000\du}
\pgfsetdash{}{0pt}
\pgfsetdash{}{0pt}
\pgfsetbuttcap

{\draw (-5\du,0\du)--(5\du,0\du);}
{\draw (7\du,0\du)--(17\du,0\du);}
%{\draw (15\du,0\du)--(23\du,0\du);}
{\draw (31\du,0\du)--(41\du,0\du);}
%{\draw (35\du,0\du)--(43\du,0\du);}
%{\draw (34.65\du,0.7\du)--(43.35\du,0.7\du);}
%{\draw (34.65\du,-0.7\du)--(43.35\du,-0.7\du);}

%%%%%% ARROW HEAD

%{\pgfsetcornersarced{\pgfpoint{0.300000\du}{0.300000\du}}\definecolor{dialinecolor}{rgb}{0.000000, 0.000000, 0.000000}
%\pgfsetstrokecolor{dialinecolor}
%\draw (40.8\du,-1.2\du)--(37\du,0\du)--(40.8\du,1.2\du);}

%%%%%% DASHED LINK
\pgfsetlinewidth{0.400000\du}
\pgfsetdash{{1.000000\du}{1.000000\du}}{0\du}
\pgfsetdash{{1.000000\du}{1.000000\du}}{0\du}
\pgfsetbuttcap
%{\draw (10.479373\du,11.994351\du)--(17.462706\du,11.994351\du);}
{\draw (19.3\du,0\du)--(29\du,0\du);}

%%%%%% TAGS
\node[anchor=west] at (48\du,0\du){${\rm A}_r$};

\node[anchor=south] at (-6\du,1.1\du){$\scriptstyle a_1-a_0$};

\node[anchor=south] at (6\du,1.1\du){$\scriptstyle a_2-a_1$};

\node[anchor=south] at (18\du,1.1\du){$\scriptstyle a_3-a_2$};

\node[anchor=south] at (30\du,0.95\du){$\scriptstyle a_{r-1}-a_{r-2}$};

\node[anchor=south] at (44\du,0.95\du){$\scriptstyle a_r-a_{r-1}$};

%\node[anchor=south] at (44\du,1.1\du){$\scriptstyle n$};

\end{tikzpicture} 
$$
\end{remark}

\subsection{Splitting type of vector bundles associated with representations}\label{ssec:splittype}

Let us now consider here a $G$-principal bundle over $\P^1$, determined by an element $\theta\in \Na(H)$, and a linear representation $V$ of $G$ of dimension $r$, whose associated vector bundle, denoted by $\cF$, has splitting type $(a_1,\dots, a_r)$. %, $a_1\leq\dots \leq a_r$. 
The relation between this splitting type and the data defining the representation $V$ --its weights $w_1,\dots,w_r\in \Mo(H)$ (possibly repeated)
-- is the following: if the $G$-principal bundle over $\P^1$ is given by the lattice point $\theta\in\Na(H)$, then the entries of the splitting type are precisely the products $\theta(w_i)$, $i=1,\dots,r$.%, appearing as many times as the multiplicity of the weight $w_i$ in the representation $V$.

The situation is similar if we consider a projective representation $\P(V)$ of $G$: it defines a projective bundle which, being the base $\P^1$, can be written as $\P(\cF)$ for some vector bundle $\cF$, and we are interested here in computing its splitting type, up to a twist. %Note that, if $\cF$ has splitting type $(a_1,\dots ,a_r)$, $a_1\leq\dots \leq a_r$, then the $\PGL(V)$-bundle associated to $\P(\cF)$ is given by the tag by $(a_2-a_1,\dots, a_r-a_{r-1})$; moreover, knowing this tag suffices to determine the splitting type of $\cF$ up to a twist.

Note that the twists of the vector bundle $\cF$ are necessarily coming from a linear representation $V$ of $G$; in fact, the action of $G$ over $\P(V)$ does not necessarily lift to an action on $V$, but to the action of another semisimple group $G'$ %\footnote{Equal to the identity component of the fiber product $G\times_{\PGL(V)}\SL(V)$?} 
(in particular, $V$ is a representation of the Lie algebra $\fg$)
with the same Lie algebra, related to $G$ via an isogeny $f:G'\to G$. In particular, we may consider the weights of this linear representation of $G'$, which lie in $\Mo(H')$, but not necessarily in $\Mo(H)$. If $\theta\in\Na(H')\subset\Na(H)$ (that is if our $G$-principal bundle can be reduced to $G'$), then the splitting type of $\cF$ can be computed as above. If that is not the case we may still compute the splitting type as follows. 

Let $(a_1,\dots, a_r)$  be the splitting type of $\cF$, %Without loss of generality, we may assume that $a_1=0$. 
and let $w_1,\dots, w_r\in P$ be the weights of the representation $V$ of $G'$.  
Since $\Mo(H)$ has finite index in $\Mo(H')$ it follows that there exists a positive integer $m$ such that the symmetric power $S^mV$ is a representation of $G$. %\footnote{To be more specific, the action of $G$ on $\P(V)$ induces an action of $G$ on $\P(S^mV)$, whose associated projective bundles is $\P(S^m\cF)$}. 
This representation has as weights the sums $\sum_{i=1}^mw_{r_i}$, $r_i\in\{1,\dots,r\}$, and it provides a vector bundle $\cG$, which is isomorphic to a twist of the symmetric power $S^m\cF$. 
The elements of the splitting type of $\cG$, that we denote by  $(b_1,\dots, b_t)$, are of the form $\sum_{i=1}^m\theta(w_{r_i})$, $r_i\in\{1,\dots,r\}$. On the other hand, it follows that there exists a constant $C\in \Z$ such that every $b_i$ is of the form $C+\sum_{i=1}^ma_{r_i}$, $r_i\in\{1,\dots,r\}$.

Let us observe now that the map associating to every set of real numbers $\{a_1,\dots,a_r\}$ the set $\{\sum_{i=1}^ma_{r_i}|\,r_i=1,\dots,r\}$ of all the possible sums of $m$ (possibly repeated) elements of the set, is injective. %\footnote{I would not explain more, but the algorithm to compute a preimage is the following: first $a_1=\min\{b_i\}/m$; assuming that we know the first $k$ elements $a_1,\dots,a_k$ of the splitting type, we denote by $\{c_1,\dots,c_{t'}\}$ the set of all the sums of $m$ elements in $\{a_1,\dots,a_k\}$. Then $$a_{k+1}=\min\left(\{b_1,\dots b_t\}\setminus \{c_1,\dots,c_{t'}\}\right)-(m-1)a_1.
%$$}. 
It then follows that, up to reordering of the indices, we may write $\theta(w_i)=a_i+C/m$. As a consequence the tag of $\P(\cF)$ is given by the successive difference of the elements $\theta(w_i)$. Summing up:

\begin{lemma}
With the same notation as above, let $\P(\cF)$ be a projective bundle over $\P^1$ determined by a $G$-principal bundle defined by a co-character $\theta\in \Na(H)$, and by a projective representation of $G$, given by weights $w_1,\dots,w_r$. Assume that the indexing of these weights is chosen so that $\theta(w_1)\leq\dots\leq \theta(w_r)$. Then the tag of the flag bundle associated to $\P(\cF)$ is precisely 
$$\big(\theta(w_2)-\theta(w_1),\dots,\theta(w_r)-\theta(w_{r-1})\big).$$ 
\end{lemma}

\subsection{The Mori cone of a flag bundle over $\P^1$}\label{ssec:minsecgroth}

In general, given a smooth morphism $\pi:Z\to \P^1$, a section $\sigma:\P^1\to Z$ of $\pi$ is called {\it minimal} if it cannot be deformed with a point fixed. In particular, a section $\sigma$ is minimal if $H^0(\P^1,\sigma^*T_{Z|\P^1})=0$. 

Minimal sections of a projective bundle $\P(\cF)=\P(\bigoplus_i\cO(a_i))$ over $\P^1$ are classically known to be in %well understood thanks to Grothendieck's theorem: the minimal sections of the bundle $\P(\cF)=\P(\bigoplus_i\cO(a_i))$ are in 
one-to-one correspondence with the quotients $\bigoplus_i\cO(a_i)\to\cO(a_{-})$, where $a_{-}=\min\{a_i\}$. %(as usual, we use here the Grothendieck notation for projectivization of vector bundles).
In particular, the locus of these minimal sections is of the form $\P^1\times\P(V_{-})\subset\P(\cF)$, for a certain vector space $V_{-}$, and  minimal sections correspond to fibers of the natural projection $\P^1\times\P(V_{-})\to \P(V_{-})$.  %{\color{red} here I am mixing Grothendieck and natural projectivization; we should arrange it.}

\begin{proposition}\label{prop:moricone}
Let $\pi:Y \to \P^1$ be a flag bundle. Denote by $R_i:=\R_+[\Gamma_i]$, $i=1, \dots n$,  the extremal ray in $\NE(Y)$ corresponding to the elementary contraction $\rho_i:Y \to Y_i$, and by $\Gamma_0$ a minimal fundamental section of $\pi$. Then the Mori cone of $Y$ is the simplicial cone generated by the rays $R_i$, $i=1,\dots, n$, and $\R_+[\Gamma_0]$.
%$$\NE(Y) = \langle R_1, \dots, R_n, \R_+[C_0] \rangle.$$ 
\end{proposition}

\begin{proof}

Since we know that the relative Mori cone of $Y$ over $\P^1$ is a simplicial cone whose extremal rays are the $R_i$'s (see Section \ref{sec:prelim}), 
it is enough to show that, for any subset $I\subset D$ (including the case $I=\emptyset$) there exists a morphism $f_I:Y\to Y'_I$ contracting precisely the curves whose classes lie in the vector subspace of $\Nu(Y)$ generated by $[\Gamma_0]$ and the rays $R_i$, $i\in I$. 

In order to do so, we consider a weight $w$ belonging to the relative interior of $$\ol{\cW}\cap\langle\{\lambda_j,\,\,j\notin I\}\rangle\subset \langle\{\lambda_j,\,\,j\notin I\}\rangle,$$ where $\ol{\cW}$ denotes the closure of the fundamental Weyl chamber $\cW$. In this way, being $V$ the irreducible representation of $G$ with highest weight $w$, the variety $G/P(I)$ is the unique closed $G$-orbit in $\P(V)$. Let us denote by %$\theta\in \Na(H)$ the co-character defining the $G$-principal bundle,  
$w_1=w,\dots, w_r$ the weights of $V$ as a representation of $\fg$, and $V=\bigoplus_{i=1}^rV_i$ the corresponding eigenspace decomposition of $V$ (so that $V_i$ is the eigenspace associated to the weight $w_i$). Note that the $w_i$'s are contained in the polytope $P(w)$ generated by the orbit $W.w$ of $w$ by the action of the Weyl group $W$ in $\Mo(H)\otimes_\Z\R$. Moreover, $P(w)$ is contained in the cone $w+\R_{\geq 0}(-\alpha_1,\dots,-\alpha_n)$; since $d_i=\theta(-\alpha_i)\geq 0$ and $w\in\ol{\cW}$ is a non negative combination of the roots $\alpha_i$'s, one can easily show that $a_{-}:=\min(\theta(w_i))=\theta(w)$. Set now%, where $W_{I_0}\subset W$ is the subgroup of $W$ generated by the set of reflections $\{r_{i},\,\, i\in I_0\}$.
$$
V_{-}:=\bigoplus_{\theta(w_i)=a_{-}} V_i,
$$  
and denote by $\cE_H$ the $H$-principal bundle associated to the cocycle $\theta\in H^1(\P^1,H)$ defining $\pi$. We have two embeddings $$Y_I\hookrightarrow\P(\cF):=\cE_H\times^H\P(V),\quad\cE_H\times^H\P(V_{-})\hookrightarrow\P(\cF)%\cE_H\times^H\P(V)
,$$ 
and $\cE_H\times^H\P(V_{-})\cong\P^1\times\P(V_{-})$ is the locus of minimal sections of $\P(\cF)$.

Note that, being $w$ the highest weight of the representation $V$, $\dim(V_w)=1$ and so $\cE_H\times^H\P(V_{w})\subset\P^1\times\P(V_{-})$ is a minimal section of $\P(\cF)$. On the other hand, following \cite[p. 388]{FuHa}, the unique closed orbit of the action of $G$ on $\P(V)$, isomorphic to $G/P(I)$, is the orbit of the point $P(V_w)\in \P(V)$, corresponding to the natural projection $V\to V_{w}$. The fact that $w\in\ol{\cW}$ implies that the corresponding eigenspace in $V$ is fixed by %the Borel subalgebra of $\fg$, hence by 
the Borel subgroup $B$, and we may conclude that $B$ is contained in the isotropy subgroup of $p$, that is $P(I)\subset G$. Then we have $H\subset B\subset P(I)$, and so $\cE_H\times^H\P(V_{w})$ is the image of a minimal fundamental section $\Gamma_0$ of $Y$ into $Y_I$. 

By taking an appropriate twist, we may assume that the vector bundle $\cF$ is nef, but not ample, and so the
natural map $c:\P(\cF)\to\P(H^0(\P^1,\cF))$ defined by the evaluation of global sections contracts the curves whose classes lie in the ray generated by the class of the section $\cE_H\times^H\P(V_{w})$.  We may then consider the composition $c\circ \rho_I:Y\to \P(H^0(\P^1,\cF))$, which is a contraction satisfying the required properties.
\end{proof}

From this statement, by means of the contractions $\rho_I$ of the flag bundle defined in Section \ref{sec:prelim}, %{ssec:flagbdl}, 
we get a similar result for any rational homogeneous bundle over $\P^1$:

\begin{corollary}  $\pi:Y_I \to \P^1$ be a rational homogeneous bundle with fibers $G/P(I)$. Then, with the notation of Proposition \ref{prop:moricone}, the cone $\NE(Y_I)$ is generated by the images  via $\rho_{I*}$ of the rays $R_k, \,\,k \in D$, and  $\R_+[\Gamma_0]$.
%where $R_k$, for $k \in D \setminus I$ are the images of the corresponding extremal rays of $Y$ into $\NE(Y_I)$.
\end{corollary}

We end this section with the description of the locus of minimal sections of a flag bundle, which turns out to be a trivial subflag. 
Let $\pi: Y \to \P^1$ be a flag bundle, with tag $\delta=(d_1, \dots, d_n)$, and set
$$
I_0:=\left\{i\in  D |\,\,d_i=0\right\}.
$$
The  Dynkin subdiagram of $\cD$ supported on $I_0$ will be denoted by $\cD_{I_0}$ and $P(I_0)\subset G$ will stand for the corresponding parabolic subgroup (so that the fibers $F_{I_0}$ of the submersion $\rho_{I_0}:Y\to Y_{I_0}$ are flag manifolds associated with a semisimple subgroup of $G$ determined by the Dynkin subdiagram $\cD_{I_0}$).  Then

\begin{proposition}\label{prop:trivialsub}
The locus of minimal sections of $\pi:Y \to \P^1$ is a trivial subflag bundle $F_{I_0}\times \P^1$.
In particular the minimal sections of $Y$ are algebraically equivalent. 
\end{proposition}

\begin{proof}
As in the proof of Proposition \ref{prop:moricone} consider a highest weight $w$ belonging to the relative interior of $$\ol{\cW}\cap\langle\{\lambda_j,\,\,j\notin I_0\}\rangle\subset \langle\{\lambda_j,\,\,j\notin I_0\}\rangle,$$ 
denote by  $w_1=w,\dots, w_r$ be the weights of $V$, and by $a_-$ the minimum of $\theta(w_i)$ (which equals $\theta(w))$.

Denoting by  $W_{I_0}$ the subgroup of the Weyl group generated by the set of reflections $\{r_{i}|\,\, i\in I_0\}$,
  we have that $(\theta(w_i))=a_-$ if and only if $w_i \in W_{I_0}.w$ and this happens if and only if $w_i=w$,  since  $w \in \langle\{\lambda_j,\,\,j\notin I_0\}\rangle$.
  
In particular the projective bundle $\P(\cF)$ (defined by the projective representation of $\fg$ with highest weight $w$) has only one minimal section $\Gamma$, and therefore the same happens for $Y_{I_0}$. Over this minimal section the flag bundle $\rho_{I_0}$ is trivial, hence, if $\Gamma_0$ is a minimal section of $Y$ mapping to $\Gamma$ we have  $\rho_{I_0}^{-1}(\Gamma)= \Gamma_0 \times F_{I_0}$.
\end{proof}

%%%%%%%%%%%%%%%%%%%%%%%%%%%%%%%%%%%%%%%%%%%%%

\section{Uniform flag bundles}\label{sec:unif}

Uniformity of flag bundles is an extension of a classical concept within the theory of vector bundles (cf. \cite[\S 3]{OSS}), that applies to a triple $(X,\cM,\cE)$, where $X$ is an algebraic variety, $\cM$ is a dominating family of rational curves on $X$, and $\cE$ is a vector bundle on $X$. Then $\cE$ is said to be uniform with respect to  $\cM$ if the (isomorphism class of the) pullback of $\cE$ via the normalization of one of the curves of the family does not depend on the  chosen curve.

Let us now consider a $G/B$-bundle $\pi:Y\to X$ on a smooth complex projective variety $X$, and a dominating  family of rational curves $\cM$ on $X$, with universal family  $p:\cU \to \cM$, and evaluation morphism $q:\cU\to X$. We may consider the pullback $q^*Y=Y\times_X\cU$, which is a $G/B$-bundle over $\cU$, whose natural morphism onto $\cU$ will be denoted by $\pi$, by abuse of notation. Following Section \ref{sec:groth}, for every rational curve $\Gamma=p^{-1}(z)\subset\cU$ the  pullback of the $G/B$-bundle $q^*Y$  to $\Gamma$ is determined by its tag $\delta_\Gamma(Y):=(d_1,\dots,d_n)\in\Z_{\geq 0}^n$ on the Dynkin diagram of the group $G$, and one may pose the following definition:

%Once the ordering of the nodes of the Dynkin diagram is fixed, the corresponding $n$-tuple $\delta_\Gamma(Y):=(d_1,\dots,d_n)$ is called the {\it tag of the bundle at }$\Gamma$. 
%These integers may be interpreted as the degrees of the relative canonical divisors $K_j$ of the elementary contractions $\rho_j$ of $Y$ extending the elementary contractions of $G/B$ --see (\ref{eq:contract}) -- on a minimal section of $q^*Y$ over $\Gamma$, that is by definition a section whose deformations with a point fixed are trivial. 

%\begin{remark}\label{rem:tagsplit}
%In particular, in the case in which $G/B$ is the complete flag manifold of a projective space, the tag of the bundle at a rational curve $\Gamma$ can be easily computed from the splitting type of (a vector bundle defining) the corresponding projective bundle on $\Gamma$. If the splitting type of the bundle is $(a_0,\dots,a_r)$, $a_0\leq\dots\leq a_r$, the tagged Dynkin diagram is:
%$$
%\input{DiagAnnum.tex}
%$$
%\end{remark}
%Hence it makes sense to pose the following definition:

\begin{definition}\label{def:redu}
Given a smooth complex projective variety $X$, a dominating family of rational curves $\cM$ on $X$, and a flag bundle $\pi:Y\to X$, we say that $Y$ is {\it uniform with respect to $\cM$} if the tag $\delta_\Gamma(Y)$ is independent of the choice of the curve $\Gamma\in\cM$. In this case, the tag will be denoted  by $\delta(Y)$, or simply by $\delta$. 
\end{definition}

\begin{remark} Note that, if $\pi:Y\to X$ is a uniform flag bundle defined by a cocycle $\theta\in H^1(X,G)$,
then every vector bundle determined by $\theta$ and a given linear representation of $G$ will be uniform in the classical sense.
\end{remark}

\begin{example}
Besides complete flag bundles defined by uniform projective bundles, the most obvious examples of uniform flag bundles are the products $X\times G/B$, that we call {\it trivial} flag bundles. Moreover, given a semisimple group $G$, and a maximal parabolic subgroup $P\subset G$ corresponding to a simple root that is not exposed short (\cite[Definition 2.10]{LM}), then the map $G/B\to G/P$ is  a $G'/B'$-bundle, where $G'$ is a Levi part of $P$, and $B'$ is a Borel subgroup of $G'$, that is uniform with respect to the complete family of lines in $G/P$. 
\end{example}

\subsection{Families of minimal sections}

In this section we consider a  flag bundle $\pi:Y \to X$, uniform with respect to an unsplit dominating family of rational curves $\cM$, and we prove that the set of minimal sections of $Y$ over curves of $\cM$ defines an unsplit family of rational curves.

Consider ${\cM'}$ to be a component of $\rat^n(Y)$ containing a general minimal section over a general curve of $\cM$; by construction the closure of the image of ${\cM'}$ via the natural morphism ${\pi'}:{\cM'} \to \rat^n(X)$ contains $\cM$; denote by $\ol{\cM}$ the inverse image of $\cM$ via ${\pi'}$. 
 %We claim that want $\cM_Y$ parametrizes all the minimal sections of $Y$ over curves of the family $\cM$. This is a consequence of the following:
%let $\cM_0\subset\cM$ denote the image of $\ol{\cM}$. 

\begin{proposition}\label{prop:minsec}
The subvariety $\ol{\cM} \subset \rat^n(X)$ is proper and irreducible, and parametrizes all the minimal sections over curves in $\cM$.
\end{proposition}

\begin{proof}
Assume, by contradiction, that  curves in $\ol{\cM}$ degenerate to a non integral 1-cycle, i.e., that there exists a complex neighbourhood $0 \in C \subset \C$ and a map  $f:C \to \Chow(Y)$ such that, for $t \not =0$ the point $f(t)$ belongs to (the image of) $\ol{\cM}$, while the point $f(0)$ parametrizes a non integral 1-cycle. 

Since $\cM$ is proper, the image of $C$ in $\Chow(X)$ is contained in $\cM$, and the evaluation morphism $q:C\times \P^1\to X$  induces a rational map $q':C\times \P^1\dashrightarrow q^*Y$, where $q^*Y$ denotes the fiber product of the bundle $\pi:Y\to X$ and the map $q$. 

We may then consider the resolution of indeterminacies of $q'$ performed by a sequence of blowups over points in $\Gamma:=\{0\}\times\P^1$. This construction provides an effective $1$-cycle $Z$ in $q^*Y$, dominating $\Gamma$, algebraically equivalent to $q'(t,\P^1)$, $t\neq 0$, which is a minimal section of $q^*Y$ over the normalization of a curve in $\cM$. By the uniformity of $Y$, the cycle $Z$ is numerically equivalent to $\Gamma_0$, where $\Gamma_0$ denotes a minimal  section of $q^*Y$ over $\Gamma$.

Writing $Z=\sum_{i=1}^m a_iZ_i$, being $Z_i$ the classes of the irreducible components of $Z$, and $a_i\in \Z_+$, the description of the Mori cone given in Proposition \ref{prop:moricone} implies that $Z_i\in\R_+[\Gamma_0]$ for all $i$. On the other hand, $\Gamma_0$ is a minimal degree curve in the ray
$\R_+[\Gamma_0]$, hence $Z$ must consist of a unique irreducible component of multiplicity one. This completes the proof of the properness of $\ol{\cM}$.

Let $\ol{\cM}_1$ be an irreducible component of $\ol{\cM}$ containing a  general minimal section over a general element of $\cM$. By Proposition \ref{prop:trivialsub} the fiber of $\ol{\cM}_1 \to \cM$ over this point is isomorphic to the subflag $F \subset G/B$. By semicontinuity the fiber of $\ol{\cM}_1 \to \cM$ over any element $\ell$ has dimension at least $\dim F$; on the other hand, again by Proposition \ref{prop:trivialsub}, the fiber over $\ell$ of $\ol{\cM} \to \cM$ is contained in $F$. It follows that $\ol{\cM}_1 = \ol{\cM}$, and that $\ol{\cM}$ parametrizes all the minimal sections over curves in $\cM$.
\end{proof}

\subsection{Characterization of trivial flag bundles}\label{ssec:trivial}

We will now consider the simplest case in which the uniformity of the flag bundle allows us to classify it, i.e., the case in which $\delta(Y)=(0,\dots,0)$.

Throughout this section $ \pi:Y\to X$ will denote a flag bundle over a smooth complex projective manifold $X$and we will consider $s$  families of rational curves $$\cM_i\stackrel{p_i}{\longleftarrow} \cU_i\stackrel{q_i}{\longrightarrow} X.$$ 
We may now prove the characterization of trivial flag bundles stated in the introduction:
\begin{theorem}\label{thm:trivial1}
Let $X$ be a manifold which is rationally chain connected with respect to  $\cM_1, \dots,\cM_s$, unsplit families of rational curves, and $\pi:Y\to X$ a $G/B$ bundle over $X$. 
Assume that for every rational curve $\Gamma_i:=p_i^{-1}(z)$ we have $\delta_{\Gamma_i}(Y)=(0, \dots, 0)$.
Then $ Y\cong X\times G/B$ is trivial as a $G/B$-bundle over $X$.
\end{theorem}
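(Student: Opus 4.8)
The plan is to reduce the statement to the known characterization of trivial vector bundles in terms of their restrictions to curves, and to do so by a careful bookkeeping of tags versus splitting types. First I would observe that since the tag $\delta_{\Gamma_i}(Y)=(0,\dots,0)$ for every curve $\Gamma_i$ in one of the families $\cM_i$, the restriction of $Y$ to every such $\Gamma_i$ is the trivial $G/B$-bundle; indeed by Grothendieck's theorem any $G/B$-bundle on $\P^1$ is classified by a co-character, which is exactly the tag, and a zero tag means the classifying co-character is trivial. In particular, choosing a faithful representation $G\hookrightarrow \GL(V)$, the associated vector bundle $\cE=E\times^G V$ has trivial splitting type on every $\Gamma_i$ (see Remark \ref{rem:tagsplit}; more generally zero tag forces trivial splitting type for every associated vector bundle, as the splitting jumps are nonnegative combinations of the tag entries). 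Now apply the characterization of trivial vector bundles on manifolds rationally chain connected by unsplit families $\cM_1,\dots,\cM_s$ --- e.g. \cite[Proposition 2.4]{kyoto} or the arguments of \cite{AW,BdS,Pan} --- to conclude that $\cE$ is trivial; but triviality of one associated vector bundle is not yet triviality of the principal bundle, so this step must be iterated or refined.

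The cleaner route, which I would actually carry out, is to argue directly with the cocycle $\theta\in H^1(X,G)$ rather than passing through a single representation. The idea is to run an induction that mimics the vector bundle proof but stays inside the group $G$. Concretely, I would show that $\theta$ lies in the image of $H^1(X,H)\to H^1(X,G)$ for a Cartan subgroup $H$ --- i.e. that $Y$ is diagonalizable --- and then that the diagonal cocycle is itself trivial. For the first part: the tag being zero means that for each node $j$, the line bundle $\cO_Y(-K_j)$ restricted to a minimal section over any $\Gamma_i$ has degree zero; feeding this into the filtrations of $T_{Y|X}$ from Section \ref{ssec:filt} and the theory of tags developed in \cite{OSW}, one gets that the associated vector bundles $\mho_I$ (and all the relevant line bundles $\cO_X(\sigma^*R)$, $R\in\Phi$) have trivial splitting type on each $\Gamma_i$. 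The key technical input is then that a line bundle (or more generally a vector bundle) on $X$ whose restriction to every curve of each unsplit family $\cM_i$ is trivial is globally trivial when $X$ is rationally chain connected by the $\cM_i$: this follows because such a bundle is numerically trivial (intersection zero with all the $C_i$, which span $N_1(X)$ by rational chain connectedness), hence --- being trivial on rational curves covering $X$ --- actually trivial, using e.g. the vanishing $H^1(X,\cO_X)=0$ that rational chain connectedness provides, or directly \cite[Proposition 2.4]{kyoto}.

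Granting that, the argument assembles as follows. First, triviality of all $\cO_X(\sigma^*R)$ forces, via Corollary \ref{cor:decompo1} applied after first producing a section, the existence of a section $\sigma:X\to Y$ and then diagonalizability: here I must be slightly careful about the order, so I would instead argue that the zero-tag hypothesis directly gives a reduction of $\theta$ to a Cartan $H$ by a relative version of Grothendieck's theorem over the (rationally connected) base, the obstructions to which live in the $H^1$ of bundles trivial on the covering families and hence vanishing. Once $\theta\in H^1(X,H)=H^1(X,(\C^\ast)^n)$, its $n$ components are line bundles on $X$ that restrict trivially to every $\Gamma_i$, hence are trivial by the previous paragraph, so $\theta$ is the trivial cocycle and $Y\cong X\times G/B$.

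The main obstacle, I expect, is the first reduction step --- showing the zero tag forces a reduction of the structure group to a Cartan subgroup (diagonalizability) --- because one cannot simply quote Corollary \ref{cor:decompo1} (which presupposes a section). One must instead run an inductive unfolding of the flag bundle along the filtration of $\cE_0=T_{Y|X}$, or equivalently along a chain $\emptyset = J_0 \subsetneq J_1 \subsetneq \cdots \subsetneq J_{k+1}=D$ as in Definition \ref{def:admiscomp}, at each stage using that the relevant obstruction class sits in an $H^1$ of a vector bundle $\mho_{J_r}$ which is trivial because its restriction to each covering family $\cM_i$ is trivial (this is where zero tag and rational chain connectedness combine). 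This step-by-step devissage, controlling at each stage that the newly appearing bundle is again trivial on the $\Gamma_i$, is the technical heart; the rest is the soft fact that a vector bundle trivial on the members of unsplit covering families of a rationally chain connected manifold is globally trivial.
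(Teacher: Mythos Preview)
Your final two paragraphs correctly identify the endgame: once a section $\sigma:X\to Y$ exists, Corollary \ref{cor:decompo1} gives diagonalizability, and then the $n$ line bundles defining the diagonal cocycle are numerically trivial on the $\cM_i$, hence trivial since $H^1(X,\cO_X)=0$. The paper finishes in exactly this way. The gap is in how you produce the section.

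Your proposed d\'evissage does not get off the ground. The obstruction class $\xi\in H^1(X,\mho_I)$ of Proposition \ref{prop:decompo1} measures whether a cocycle \emph{already lying in} $H^1(X,P(I))$ descends further to the Levi $L(I)$; it says nothing about reducing from $G$ to a proper parabolic in the first place. That first reduction is equivalent to producing a section of some $\pi_I:Y_I\to X$, and there is no linear obstruction theory for this step: the fibre $G/P(I)$ is not an affine space, so one cannot run a top-down induction on $H^1$-vanishing along the chain $J_0\subsetneq\dots\subsetneq J_{k+1}=D$. (Note also that even if $\mho_I$ were trivial, you would still need the \emph{class} $\xi$ to vanish, and $H^1(X,\cO_X^{\oplus k})=0$ only because $X$ is rationally connected --- but this is moot until you have the reduction to $P(I)$.) Your appeal to ``a relative version of Grothendieck's theorem over the base'' is precisely the statement to be proved.

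The paper's route to the section is entirely different and geometric. Since the tag is zero on every $\Gamma_i$, the pull-back of $Y$ to $\cU_i$ is a $(\P^1\times G/B)$-bundle over $\cM_i$; projecting off the $\P^1$ factor yields, for each $i$, an unsplit family $\ol{\cM}_i$ of rational curves in $Y$, namely the minimal sections of $Y$ over curves of $\cM_i$. One then forms the rational quotient $\tau:Y^0\to Z^0$ of $Y$ with respect to all the $\ol{\cM}_i$. A general fibre $X'$ of $\tau$ has every $-K_j|_{X'}$ numerically trivial (because $K_j\cdot\ol\Gamma_i=d_j=0$ and the classes of the $\ol\Gamma_i$ span $N_1(X')$), hence $-K_\pi|_{X'}$ is trivial by Lemma \ref{lem:antican}; since $-K_\pi$ is $\pi$-ample, $\pi|_{X'}:X'\to X$ is finite, and the minimal-section description plus rational chain connectedness of $X$ forces it to be surjective. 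Adjunction then gives $K_{X'}=(\pi^*K_X)|_{X'}$, so $\pi|_{X'}$ is \'etale, hence an isomorphism because $X$ is simply connected. This is the section; from here your endgame and the paper's coincide.
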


\begin{proof}
For every $i=1,\dots, s$, we consider the family $\ol{\cM}_i$ of minimal sections of $Y$ over curves of $\cM_i$,
constructed in Proposition \ref{prop:minsec}.

These families  define a {\it rational quotient} of $Y$, i.e., there exists a proper morphism $\tau: Y^0\to Z^0$, defined on an open set $ Y^0\subset Y$, onto a normal variety $Z^0$, whose fibers are  equivalence classes in $Y^0$ of the relation defined by connectedness with respect to the families $\overline{\cM}_i$ (see \cite[IV.4.16]{kollar} for details). 

A general fiber $X'$ of $\tau$ is a smooth projective variety, rationally connected by the curves of the (unsplit) families $\overline{\cM}_i$. 
This implies that the numerical class of every curve contained in $X'$ is a linear combination of the numerical classes of the curves parametrized by the families $\overline{\cM}_i$'s. In particular $-K_j$ is numerically trivial on $X'$ for every $j \in  D $, hence trivial, since being $X'$ rationally connected, $H^1(X',\cO_{X'})=0$, and so the natural map $\Pic(X')\to H^2(X',\Z)$ is injective. Therefore
$-K_\pi$, which is an integral combination of the $-K_j$'s (see Lemma \ref{lem:antican}) is trivial on $X'$.

We claim now that the restriction of $\pi$ to $X'$ is necessarily finite onto $X$. The finiteness follows from the fact that $X'$ cannot contain a curve contracted by $\pi$, since $-K_\pi$ is $\pi$-ample, while
the surjectivity follows by the interpretation of each $\overline{\cM}_i$ as the family of minimal sections over curves of $\cM_i$, the triviality of $ Y$ on these curves, and the %hypothesis on the 
rational chain connectedness of $X$ with respect to them. 

%%%%%%%%%%%%%%%%%%%%%%%%%%%%%%%

Now, adjunction tells us that $$K_{X'}=(K_{Y})_{|X'}=\big(K_{\pi}+ \pi^*K_X\big)_{|X'}=( \pi^*K_X)_{|X'},$$
so $\pi_{|X'}$ is an \'etale cover of $X$, contradicting that $X$ is rationally chain connected, and hence simply connected, unless $X'$ is a section of $\pi$.

By Corollary \ref{cor:decompo1} we may conclude that $ \pi: Y\to X$ is diagonalizable, i.e., $\pi$ is defined by a cocycle in $H^1(X,(\C^*)^n) \simeq \Pic(X)^n$; let $L_1,\dots, L_n\in\Pic(X)$ be the line bundles in $X$ determined by this cocycle. Since the restriction of $Y$ to any rational curve of the families $\cM_i$ is trivial, it follows that $L_1,\dots,L_n$ are trivial on each one of this curves. But $X$ is rationally chain connected with respect to the families $\cM_i$, therefore the line bundles $L_j$ are numerically trivial. Finally,  since $X$ is rationally connected, arguing as in the case of $X'$ above, %that $H^1(X,\cO_X)=0$, and hence that the map $\Pic(X)\to H^2(X,\Z)$ is injective: this tells us 
we conclude that the line bundles $L_j$ are trivial, which is equivalent to say that the cocycle determining the bundle is trivial. 
\end{proof}

As a consequence of  Theorem \ref{thm:trivial1}, taking in account that a rational homogeneous bundle is trivial if and only if its associated flag bundle is trivial we get:

\begin{corollary}\label{cor:trivial2}
Let $X$ and $\cM_1, \dots,\cM_s$ be as in Theorem \ref{thm:trivial1}, and let $\pi:E\to X$ be an $F$-bundle over $X$, with $F$ rational homogeneous, satisfying that for the normalization $f_i:\P^1\to X$ of any curve of the family $\cM_i$ and all $i=1,\dots,s$, the fiber product $\P^1\times_X Y$ is trivial as an $F$-bundle over $\P^1$. Then $E$ is trivial as an $F$-bundle over $X$.
\end{corollary}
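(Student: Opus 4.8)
The plan is to reduce the statement about an arbitrary $F$-bundle (with $F$ rational homogeneous) to the case of a $G/B$-bundle already treated in Theorem \ref{thm:trivial1}. Write $F\cong G/P$ for $G$ semisimple and $P\subset G$ parabolic; since $F$ is rational homogeneous, we may take $G$ to be the identity component of $\Aut(F)$, so that the $F$-bundle $\pi:E\to X$ is determined by a cocycle $\theta\in H^1(X,G)$. Associated with this same cocycle is a $G/B$-bundle $\pi':Y\to X$, fitting into a factorization $Y\to E\to X$ (the partial flag bundle sitting over $E$, obtained as $E\times^G G/B$, analogous to \eqref{eq:contract}). The key observation is that $E$ is trivial as an $F$-bundle over $X$ if and only if $Y$ is trivial as a $G/B$-bundle over $X$: indeed, triviality of $Y$ means $\theta$ is trivial in $H^1(X,G)$, and this is exactly what triviality of $E$ means, since both bundles are built from $\theta$ via the tautological $G$-action. (One direction is obvious; for the converse, a trivialization of $E$ is a reduction of $\theta$ to the trivial subgroup, which then trivializes every associated bundle, in particular $Y$.)

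First I would make this reduction precise, invoking the fact recorded in the text that $G$ can be chosen as $\Aut(F)^\circ$ and that the cocycle formalism of Section \ref{ssec:stconst} applies verbatim. Then it remains to check that $Y$ satisfies the hypotheses of Theorem \ref{thm:trivial1}, namely that for every curve $\Gamma_i$ in each family $\cM_i$ one has $\delta_{\Gamma_i}(Y)=(0,\dots,0)$. Now the tag of $Y$ at $\Gamma_i$ is computed from the restriction of the cocycle $\theta$ to (the normalization of) $\Gamma_i$; by hypothesis the pullback $\P^1\times_X E$ is trivial as an $F$-bundle over $\P^1$, which by Grothendieck's theorem (\cite{Gro1}) — or directly, since the restricted cocycle in $H^1(\P^1,G)$ is trivial — forces the restricted $G/B$-bundle $\P^1\times_X Y$ to be trivial over $\P^1$ as well. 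A trivial $G/B$-bundle over $\P^1$ has all degrees $d_j$ equal to zero on its (constant) minimal section, so $\delta_{\Gamma_i}(Y)=(0,\dots,0)$.

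With both hypotheses verified, Theorem \ref{thm:trivial1} applies and yields $Y\cong X\times G/B$, i.e. $\theta$ is the trivial cocycle in $H^1(X,G)$; hence $E\cong X\times F$. The only point requiring genuine care — and the place where I expect the main obstacle to lie — is the equivalence ``$E$ trivial $\iff$ $Y$ trivial'', more precisely the implication that a trivialization at the level of the flag bundle $Y$ descends to a trivialization of $E$: one must argue that a reduction of the structure group to the identity, detected by $Y$, is genuinely a statement about the class $\theta\in H^1(X,G)$ and not merely about one associated bundle. This is where the remark in Section \ref{sec:prelim} that $\pi$ is \emph{determined} by $\theta$, together with the observation that $Y$ and $E$ are associated with the \emph{same} $\theta$ via two different $G$-spaces, does the work: triviality of either one is literally the vanishing of $\theta$. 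I would phrase the argument so that this equivalence is stated once, cleanly, and the rest is a direct quotation of Theorem \ref{thm:trivial1}.
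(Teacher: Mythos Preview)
Your proposal is correct and follows exactly the paper's approach: the paper's proof is the single sentence ``taking in account that a rational homogeneous bundle is trivial iff its associated flag bundle is trivial,'' and you have simply unpacked this equivalence via the shared cocycle $\theta\in H^1(X,G)$ and then invoked Theorem~\ref{thm:trivial1}. The point you flag as the main obstacle---that triviality of $Y$ forces triviality of $E$, and conversely on each $\Gamma_i$---is precisely the fact the paper takes for granted; your justification through the common cocycle (with $G=\Aut(F)^\circ$, noting $X$ and $\P^1$ are simply connected so the passage between $H^1(\cdot,G)$ and $H^1(\cdot,\Aut F)$ is harmless) is adequate.
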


\section{Diagonalizability criteria for uniform flag bundles}\label{sec:diagunif}

Along this section $X$ will denote a Fano manifold of Picard number one and $\pi:Y\to X$ a flag bundle, uniform with respect to an unsplit dominating  family $\cM$ of rational curves, with tag $\delta=(d_1,\dots,d_n)$. 

Considering the family $\ol{\cM}$ of minimal sections of this bundle over the curves of the family $\cM$  leads to a concept of {\it uniform reducibility}, that we will discuss in Section \ref{ssec:redu}. 
Then, in Section \ref{ssec:GMcrit}, we will state some reducibility criteria for uniform flag bundles. In particular, we will show a flag bundle counterpart of the classic Grauert--M\"ulich theorem, together with some diagonalizability criteria for uniform bundles with special tagging.

\subsection{Uniform reducibility of uniform flag bundles}\label{ssec:redu}

As in Section \ref{ssec:minsecgroth} we set
$I_0:=\left\{i\in  D |\,\,d_i=0\right\}$; by the uniformity assumption this is the same on every curve of $\cM$.
In view of Theorem \ref{thm:trivial1}, we will always assume $I_0\subsetneq  D $. 

%Then, by Proposition \ref{prop:trivialsub}, over every rational curve $\Gamma$ of the family we have a well-defined trivial subflag bundle $F_{I_0}\times\Gamma\subset\pi^{-1}(\Gamma)$, where $F_{I_0}$ denotes a fiber of $\rho_{I_0}$, and we get a morphism from $\Gamma$ to $Y_{I_0}$. In this section we will show that we may glue together this data to construct a morphism $s_{0}:\cU\to Y_{I_0}$. 
%
%In order to do this, we need to study first the structure of the set of sections of any $G/B$-bundle over $\P^1$, that follows from Grothendieck's theorem and its interpretation provided in \cite[Section~3.3]{OSW}. 

Let us then consider the family of minimal sections of $Y$ over curves of the family $\cM$, denoted by $\ol{p}:\ol{\cU}\to\ol{\cM}$.
There is a commutative diagram:
$$
\xymatrix@=25pt{\ol{\cU}\ar[r]^{\ol{p}}\ar[d]&\ol{\cM}\ar[d]\\\cU\ar[r]^p&\cM}
$$
whose vertical arrows are smooth morphisms whose fibers, by Proposition \ref{prop:trivialsub}, are isomorphic to the subflag $F_{I_0}$.
We may consider the composition of the evaluation $\ol{q}:\ol{\cU}\to Y$ with $\rho_{I_0}$, that is constant on the fibers of $\ol{\cU}\to\cU$, and so we obtain a map $s_0:\cU\to Y_{I_0}$, fitting in the following commutative diagram:
$$
\xymatrix@C=45pt@R=25pt{\ol{\cM}\ar[dd]&\ol{\cU}\ar[l]_{\ol{p}}\ar[r]^{\ol{q}}\ar[rd]\ar[dd]&Y
\ar[d]^{\rho_{I_0}}\\&&Y_{I_0}\ar[d]^{\pi_{I_0}}\\\cM&\cU\ar[ur]^{s_0}\ar[l]_{p}\ar[r]^{q}&X}
$$

\begin{definition}\label{def:decomp2}
With the same notation as above, given any set $I\subsetneq  D $  of nodes of $\cD$ containing $I_0$, we denote by $\rho_{I_0,I}:Y_{I_0}\to Y_I$ the natural projection. We say that $\pi:Y\to X$ is {\it uniformly reducible with respect to $\cM$ and $I$}, or simply {\it $(\cM,I)$-reducible} if and only if the composition $\rho_{I_0,I}\circ s_{0}$ factors via $q:\cU\to X$, that is, if there exists a morphism $\sigma_{I}:X\to Y_I$ such that the following diagram is commutative:
$$
\xymatrix@=35pt{\cU\ar[r]^{s_{0}}\ar[d]_{q}&Y_{I_0}\ar[d]^{\rho_{I_0,I}}\\X\ar[r]_{\sigma_{I}}&Y_I}
$$
Note that this condition implies that $\sigma_I$ is a section of $\pi_I$.

If $q$ has connected fibers, the condition can be restated by saying that $Y$ is $(\cM,I)$-reducible if and only if the cone $s_{0*}(\NE(\cU|X))$ lies in the extremal face $\NE(Y_{I_0}|Y_I)$. Then, given $Y$, either we may find a minimal subset $I_0\subseteq I\subsetneq  D $ such that $Y$ is $(\cM,I)$-reducible, 
or $Y$ is not $(\cM,I)$-reducible with respect to $I$, for every $I$. In this case we say that $\pi$ is {\it uniformly irreducible with respect to $\cM$}, or {\it $\cM$-irreducible}, for short. 
\end{definition}

\begin{lemma}\label{lem:minred}
If $\pi$ is $(\cM,J_1)$-reducible and $(\cM,J_2)$-reducible, then it is also $(\cM,J_1 \cap J_2)$-reducible.
\end{lemma}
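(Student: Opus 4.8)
The statement to prove is Lemma \ref{lem:minred}: if $\pi$ is $(\cM,J_1)$-reducible and $(\cM,J_2)$-reducible, then it is $(\cM, J_1\cap J_2)$-reducible. The natural approach is to work with the characterization of $(\cM,I)$-reducibility in terms of the map $s_0:\cU\to Y_{I_0}$: the bundle is $(\cM,I)$-reducible precisely when $\rho_{I_0,I}\circ s_0$ factors through $q:\cU\to X$, equivalently (when $q$ has connected fibers) when $s_{0*}(\NE(\cU|X))$ is contained in the extremal face $\NE(Y_{I_0}|Y_I)$. So the first step is to reduce everything to a statement about the images of the relative cone $\NE(\cU|X)$ under $s_0$ and the projections $\rho_{I_0,J_i}$.

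**Key steps.** First I would observe that, for any $J$ with $I_0\subseteq J\subsetneq D$, the contraction $\rho_{I_0,J}:Y_{I_0}\to Y_J$ is an elementary-type contraction whose relative cone $\NE(Y_{I_0}|Y_J)$ is the face of $\NE(Y_{I_0}|X)$ (fiberwise, of $\NE(G/P(I_0))$) spanned by the classes $\Gamma_i$ with $i\in J\setminus I_0$; this is the standard description of contractions of flag bundles recorded in Section \ref{sec:prelim}. The crucial algebraic fact is then that these faces behave well under intersection: the face corresponding to $J_1\cap J_2$ is exactly the intersection of the faces corresponding to $J_1$ and to $J_2$, i.e. $\NE(Y_{I_0}|Y_{J_1\cap J_2}) = \NE(Y_{I_0}|Y_{J_1})\cap\NE(Y_{I_0}|Y_{J_2})$. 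This holds because the faces of $\NE(G/P(I_0))$ are exactly the coordinate subcones indexed by subsets of $D\setminus I_0$ (the simplicial structure coming from the simple roots), so intersecting two such coordinate faces gives the coordinate face on the intersection of the index sets. Granting this, the hypothesis gives $s_{0*}(\NE(\cU|X))\subseteq \NE(Y_{I_0}|Y_{J_1})$ and $s_{0*}(\NE(\cU|X))\subseteq\NE(Y_{I_0}|Y_{J_2})$, hence $s_{0*}(\NE(\cU|X))\subseteq\NE(Y_{I_0}|Y_{J_1\cap J_2})$, which is precisely $(\cM, J_1\cap J_2)$-reducibility. One should also check that $I_0\subseteq J_1\cap J_2$, which is immediate since $I_0\subseteq J_1$ and $I_0\subseteq J_2$, and that $J_1\cap J_2\subsetneq D$, which follows from $J_1\subsetneq D$.

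**The connected-fibers caveat.** If one does not want to assume $q$ has connected fibers, the cone-theoretic reformulation is not available, and one argues directly with the factorizations: $(\cM,J_i)$-reducibility says $\rho_{I_0,J_i}\circ s_0$ factors as $\sigma_{J_i}\circ q$ for a section $\sigma_{J_i}:X\to Y_{J_i}$. I would then use that $Y_{J_1\cap J_2}$ is a fiber product (over $X$, or at least a suitable quotient) expressing $Y_{J_1\cap J_2}$ in terms of $Y_{J_1}$ and $Y_{J_2}$ via the two projections $\rho_{J_1\cap J_2, J_i}$; more precisely, the morphism $Y_{I_0}\to Y_{J_1}\times_X Y_{J_2}$ has image whose preimage structure recovers $Y_{J_1\cap J_2}$, because $P(J_1)\cap P(J_2)=P(J_1\cap J_2)$. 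Feeding the two factored maps $\sigma_{J_i}\circ q$ through this description produces a morphism $X\to Y_{J_1\cap J_2}$ through which $\rho_{I_0, J_1\cap J_2}\circ s_0$ factors, and this morphism is automatically a section of $\pi_{J_1\cap J_2}$.

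**Main obstacle.** The routine part is the factorization bookkeeping; the real content is the combinatorial identity $\NE(Y_{I_0}|Y_{J_1\cap J_2})=\NE(Y_{I_0}|Y_{J_1})\cap\NE(Y_{I_0}|Y_{J_2})$ (equivalently $P(J_1)\cap P(J_2)=P(J_1\cap J_2)$ and the corresponding statement on faces of the cone of curves of $G/P(I_0)$). This is a standard fact about parabolic subgroups and the simplicial structure of the relative cone of a flag bundle, already implicit in the setup of Section \ref{sec:prelim}, so I expect it to be quotable rather than an obstacle; the only care needed is to make sure the face $\NE(Y_{I_0}|Y_J)$ really is extremal in $\NE(Y_{I_0}|X)$ and not merely in a fiber, which follows because $\rho_{I_0,J}$ is a (relatively) Mori contraction and $-K_{\rho_{I_0,J}}$ is relatively ample with the $-K_i$, $i\in J\setminus I_0$, cutting out the complementary face.
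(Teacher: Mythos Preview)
Your proposal is correct, and you have in fact outlined two arguments. Your second approach---working directly with the factorizations and the identity $P(J_1)\cap P(J_2)=P(J_1\cap J_2)$---is precisely what the paper does, only phrased slightly differently: rather than embedding $Y_{J_1\cap J_2}$ into $Y_{J_1}\times_X Y_{J_2}$, the paper pulls the two sections $\sigma_{J_i}(X)$ back to $Y_{J_1\cap J_2}$ via $\rho_{J_1\cap J_2,J_i}$ and observes that these preimages meet in a single point over each $x\in X$ (this is exactly the parabolic intersection identity read fiberwise), then invokes smoothness of $X$ to conclude that the intersection is a section $\sigma_{J_1\cap J_2}$. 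The underlying content is identical.

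Your primary cone-theoretic approach is a genuine alternative and is cleaner once one has the simplicial description of the faces $\NE(Y_{I_0}|Y_J)$; it reduces the lemma to the trivial identity of coordinate faces. The tradeoff is that it relies on the reformulation of $(\cM,I)$-reducibility in terms of $s_{0*}(\NE(\cU|X))$, which the paper records only under the hypothesis that $q$ has connected fibers (Definition~\ref{def:decomp2}); that hypothesis is imposed later in Setup~\ref{setup:5}, not at the point where Lemma~\ref{lem:minred} is stated. So the paper's direct argument buys slightly greater generality, while your cone argument buys transparency once one is in the connected-fibers regime where all the applications live anyway. You correctly anticipated this caveat.
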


\begin{proof}
Set $J=J_1\cap J_2$. By hypotheses, the maps $\rho_{I_0,J_i}:\cU\to Y_{J_i}$, $i=1,2$, factor via morphisms $\sigma_{J_i}:X\to Y_{J_i}$. For each $i$ let us set  $X'_i:=\rho_{J,J_i}^{-1}(\sigma_{J_i}(X))\subset Y_{J}$. One can easily check that on every fiber $\pi_{J}^{-1}(x)$, the intersection $X'_1\cap X'_2$ is a point;
then the map $X'_1\cap X'_2\to X$ is a bijection and, since $X$ is smooth, it is an isomorphism. Hence we have an inverse map $\sigma_{J}:X\to Y_J$, which satisfies $\rho_{I_0,J}\circ s_0=\sigma_J\circ q$, by construction. 
\end{proof}

In particular, when $\pi$ is $(\cM,I)$-reducible, the morphism $s_0$ factors via the fiber product $Y'_{I_0}:=Y_{I_0}\times_{Y_I}X$, which is a bundle over $X$, whose corresponding flag bundle is $\pi':Y':= Y\times_{Y_{I_0}}Y'_{I_0}=Y\times_{Y_I}X\to X$. We then have a commutative diagram:

$$
\xymatrix@=35pt{&Y'\pb\ar[d]\ar[r]&Y\ar[d]^{\rho_{I_0}}\\\cU\ar[r]\ar[rd]_q&Y'_{I_0}\pb\ar[r]\ar[d]&Y_{I_0}\ar[d]^{\rho_{I_0,I}}\\&X\ar[r]^{\sigma_{I}}\ar@{=}[rd]&Y_I\ar[d]^{\pi_I}\\&&X}
$$

Within the problem of finding diagonalizability conditions for uniform flag bundles, $(\cM,I)$-reducibility can be used to reduce the problem to flag bundles defined over groups of lower rank:

\begin{lemma}\label{lem:irred1}
With the same notation as above, $Y'$ is a uniform flag bundle over $X$, whose tag $\delta(Y')=(d'_i)_{i\in I}$ is a subtag of $\delta(Y)$, in the sense that $d'_i=d_i$ for all $I$ (considering $I$ as a subset of $ D $). Moreover, if $Y'$ is diagonalizable, then $Y$ is diagonalizable.
\end{lemma}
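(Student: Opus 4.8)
The plan is to produce the uniformity and the subtag claim by restricting everything to a single curve of the family, and then to deduce diagonalizability of $Y$ from that of $Y'$ via Proposition \ref{prop:decompo2}. First I would fix a curve $\Gamma\in\cM$ and analyze the commutative diagram immediately preceding the statement. Over $\Gamma$ the bundle $\pi^{-1}(\Gamma)\to\Gamma$ has tag $\delta=(d_1,\dots,d_n)$ by the uniformity hypothesis; the minimal section $\ol\Gamma$ of this bundle lands, by construction of $s_0$, in the trivial subflag bundle $F_{I_0}\times\Gamma$ and $\rho_{I_0}(\ol\Gamma)$ is mapped by $s_0$ into the fiber of $Y'_{I_0}\to X$ over $\Gamma$. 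Pulling $Y'=Y\times_{Y_I}X$ back to $\Gamma$ therefore gives a flag bundle over $\Gamma$ (for the semisimple subgroup of $G$ attached to the Dynkin subdiagram $\cD_I$) whose classifying cocharacter is obtained from that of $\pi^{-1}(\Gamma)$ simply by forgetting the nodes outside $I$. Hence the tag of $Y'$ on $\Gamma$ is $(d_i)_{i\in I}$, which is independent of $\Gamma$; this simultaneously proves uniformity of $Y'$ and the subtag claim. The one point to be careful about is that $Y'$ is a flag bundle for the semisimple part of $P(I)$, so its nodes are naturally indexed by $I$, and one must check that the elementary contractions of $Y'$ are the restrictions of the $\rho_i$, $i\in I$; this is exactly the content of the Reduction construction in Section \ref{ssec:stconst}, applied to the section $\sigma_I$ of $\pi_I$ coming from $(\cM,I)$-reducibility.

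For the final assertion, suppose $Y'$ is diagonalizable. By definition $Y'$ is defined by a cocycle in $H^1(X,H')$ for a Cartan subgroup $H'$ of the semisimple part $G_{P(I)}$ of $P(I)$. I would then trace through the diagram: $(\cM,I)$-reducibility gives the section $\sigma_I:X\to Y_I$, i.e.\ a reduction of the cocycle $\theta$ of $Y$ to $P(I)$; composing with $\sigma_I$ and using the embedding $i_{P(I)}:Y^{P(I)}\hookrightarrow Y$ of the Reduction construction identifies $Y'$ with the $G_{P(I)}/B_{P(I)}$-bundle $Y^{P(I)}$. A diagonalization of $Y'$, together with the section $\sigma_I$, then yields a section of $\pi:Y\to X$: indeed, a section of the diagonalized $Y'$ over $X$ (which exists because a bundle with structure group a torus always has a section, cf.\ Corollary \ref{cor:decompo1}) maps via $i_{P(I)}$ to a section of $\pi$. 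Now $X$ is Fano of Picard number one, so by the second part of Corollary \ref{cor:decompo1} (``$\pi$ admits a section'' $\Rightarrow$ ``$\pi$ diagonalizable'') we conclude that $Y$ is diagonalizable.

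Alternatively, and perhaps more cleanly, I would invoke Proposition \ref{prop:decompo2} directly: it says precisely that for a flag bundle over a Fano manifold of Picard number one which is reducible with respect to $I$, the associated $G'/B'$-bundle $\pi'$ (defined there as the image of $\theta$ in $H^1(X,L(I)/Z(L(I)))$) is diagonalizable if and only if $\pi$ is. So it suffices to identify the $Y'$ of the present lemma with the $\pi'$ of Proposition \ref{prop:decompo2}. Both are built from the cocycle $\theta$ via its reduction to $P(I)$ followed by passage to the semisimple quotient of a Levi part; the only thing to check is that the section of $\pi_I$ used in Proposition \ref{prop:decompo2} can be taken to be the $\sigma_I$ furnished by $(\cM,I)$-reducibility, which is immediate. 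Then the lemma follows.

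The main obstacle I anticipate is the bookkeeping in the first paragraph: one must verify carefully that pulling $Y\times_{Y_I}X$ back along a curve $\Gamma$ really does produce the flag bundle for $G_{P(I)}$ with the ``$i\in I$'' part of the original cocharacter, rather than something twisted by the unipotent radical. This is where the fact that $\ol\Gamma$ is a \emph{minimal} section (Lemma \ref{lem:minsec}) and lies inside the trivial subbundle $F_{I_0}\times\Gamma$ is essential — it guarantees that, on $\Gamma$, the reduction to $P(I)$ actually reduces further to $L(I)$, so no unipotent contribution appears and the tag is literally the restriction. Once that local picture is nailed down, uniformity, the subtag statement, and the diagonalizability transfer are all formal consequences of the results already established.
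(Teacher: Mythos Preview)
Your proposal is correct and follows essentially the same route as the paper. The paper's proof is much terser: it dismisses the first part as ``immediate by construction'' (the tag is computed by intersecting the $K_i$, $i\in I$, with a minimal section, and these intersections are unchanged under the embedding $Y'\hookrightarrow Y$), and for the second part it argues exactly as in your approach (a)---a section of $Y'$ over $X$ exists by Corollary~\ref{cor:decompo1}, it maps to a section of $Y$ via the projection $Y'=Y\times_{Y_I}X\to Y$, and Corollary~\ref{cor:decompo1} again gives diagonalizability of $Y$. Your alternative via Proposition~\ref{prop:decompo2} is equally valid and indeed amounts to the same argument once one notes that the $Y'$ here coincides with the $\pi'$ there; your concern about unipotent twisting in the tag computation is unnecessary, since the tag is read off from intersection numbers that are manifestly preserved under the inclusion $Y'\subset Y$.
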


\begin{proof}
The first part is immediate by construction. For the second note that, by Proposition \ref{cor:decompo1}, the diagonalizability of $Y'$ is equivalent to the existence of a section of $Y'$ over $X$, which in turn provides a section of $Y$ over $X$. 
\end{proof}

\begin{remark}\label{rem:diagdis2}
Note that, in many cases, the Dynkin diagram of the flag $Y'\to X$ will be disconnected, and, according to Remark \ref{rem:diagdis}, the diagonalizability of $Y'$ will be reduced to the diagonalizability of a certain number of uniform flag bundles over $X$ associated with simple algebraic groups of smaller rank (one for each connected component of the Dynkin diagram of $Y'\to X$). 
\end{remark}

As a consequence of Lemma \ref{lem:irred1}, we have the following corollary:

\begin{corollary}\label{cor:irred2}
With the same notation as above, if $\pi:Y\to X$ is $(\cM,I_0)$-reducible, then it is diagonalizable.
\end{corollary}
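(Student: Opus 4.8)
The plan is to reduce $(\cM,I_0)$-reducibility to the situation of Corollary \ref{cor:decompo1} by producing a section of $\pi$ over $X$, and then verifying the vanishing hypothesis of that corollary automatically holds because $X$ has Picard number one. First I would apply the construction preceding Lemma \ref{lem:irred1} with $I=I_0$: when $\pi$ is $(\cM,I_0)$-reducible, the morphism $s_0:\cU\to Y_{I_0}$ factors through a section $\sigma_{I_0}:X\to Y_{I_0}$, and we obtain the flag bundle $\pi':Y'=Y\times_{Y_{I_0}}X\to X$. By Lemma \ref{lem:irred1}, $Y'$ is a uniform flag bundle over $X$ whose tag is the subtag $(d'_i)_{i\in I_0}$. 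But by the very definition of $I_0=\{i\in D\mid d_i=0\}$, this subtag is $(0,\dots,0)$.

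The key observation is then that the fibers of $\pi'$ are flag manifolds of the semisimple group determined by the subdiagram $\cD_{I_0}$, and the uniform bundle $\pi'$ has trivial tag with respect to $\cM$. Since $X$ has Picard number one, it is covered by the single unsplit dominating family $\cM$, hence rationally chain connected with respect to $\cM$; applying Theorem \ref{thm:trivial1} (with $s=1$) we conclude that $Y'\cong X\times (G_{I_0}/B_{I_0})$ is trivial as a flag bundle over $X$. In particular $\pi'$ admits a section over $X$, which composed with the natural map $Y'\to Y$ gives a section $\sigma:X\to Y$ of $\pi$.

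Finally, I would invoke Corollary \ref{cor:decompo1}: the flag bundle $\pi:Y\to X$ admits a section $\sigma$, and $X$ is a Fano manifold of Picard number one, so $\pi$ is diagonalizable. (Equivalently, one can short-circuit the last step by noting that a trivial flag bundle is certainly diagonalizable, and then apply Lemma \ref{lem:irred1}, which states that diagonalizability of $Y'$ implies diagonalizability of $Y$.) I do not expect any serious obstacle here; the only point requiring a line of care is checking that the hypotheses of Theorem \ref{thm:trivial1} are met — namely that a single unsplit dominating family on a Picard-number-one Fano manifold renders it rationally chain connected — and that the standing assumption $I_0\subsetneq D$ (in force throughout Section \ref{sec:diagunif}) guarantees $\pi'$ is a genuine, positive-dimensional flag bundle to which the triviality argument applies, rather than vacuously trivial.
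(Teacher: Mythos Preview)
Your proposal is correct and follows essentially the same route as the paper: construct $Y'$ with $I=I_0$, observe its tag is zero, apply Theorem \ref{thm:trivial1} to get triviality, and deduce diagonalizability of $Y$. The paper concludes via Lemma \ref{lem:irred1} (trivial $\Rightarrow$ diagonalizable $\Rightarrow$ $Y$ diagonalizable), which is exactly the alternative you note in parentheses; your primary route through a section and Corollary \ref{cor:decompo1} is just unwinding the proof of Lemma \ref{lem:irred1}.
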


\begin{proof}
Arguing as above, we consider the uniform bundle $Y'\to X$ whose tag is, in this case, equal to zero. We may then apply Theorem  
\ref{thm:trivial1} to claim that $Y'$ is trivial, hence it is diagonalizable and we may conclude by Lemma \ref{lem:irred1}.
\end{proof}

%%%%%%%%%%%%%%%%%%%%%%%%%%%%%%%%%%%%%%%%%%%%%

\subsection{Infinitesimal criteria for uniform reducibility}\label{ssec:GMcrit} 

Before starting, let us describe the set of hypotheses under which our results will work.

\begin{setup}\label{setup:5}
As in the previous section, we consider the case in which $X$ is a Fano manifold of Picard number one and $\pi:Y\to X$ is a $G/B$-bundle, uniform with respect to an unsplit dominating family of rational curves $\cM$, that we will assume to be {\it complete}, i.e. that $\cM$ is an irreducible component of the scheme $\rat^n(X)$. %(cf. \cite[II 2.11]{kollar}). \marginpar{\color{red}\tiny L: completeness already introduced in Section 2.4}. 
We will further assume that the evaluation morphism  $q:\cU\to X$ is a {\em contraction}, that is, it has connected fibers.
Finally, we will assume that $\pi$ is not trivial, equivalently, with the notation of the previous section, that $I_0 \subsetneq  D $ (cf. Theorem \ref{thm:trivial1}).
\end{setup} 

\begin{definition}\label{def:mindim}
Given a dominant projective morphism between irreducible varieties $g:M\to N$, we denote by $\dim(g)$ the relative dimension of $M$ over $N$, and we define its {\it contractibility dimension},  denoted by $\cdim(g)$, as the maximum integer $r$ satisfying that every morphism  $f:M\to M'$ over $N$ whose image has relative dimension smaller than $r$ is relatively constant.
Given an irreducible complex projective variety $M$, we define its contractibility dimension, denoted by $\cdim(M)$, as the contractibility dimension of the constant morphism. 
\end{definition} 

\begin{remark}
If  moreover $g:M\to N$ is a contraction, that is if it has connected fibers,  and $g':M'\to N$ is a surjective projective morphism satisfying that $\dim(g')<\cdim(g)$, then any morphism $f:M\to M'$ satisfying $g'\circ f=g$ factors via $g$, that is, there exists a morphism $\sigma:N\to M'$ such that $\sigma\circ g=f$. In particular, $\sigma$ is a section of $g'$:
$$
\xymatrix{M\ar[rd]_g\ar[rr]^f&&M'\ar@/^2mm/[dl]^{g'}\\&N\ar@/^2mm/[ur]^{\sigma}&}
$$ 
\end{remark}

\begin{remark}
The contractibility dimension of the evaluation $q:\cU\to X$ can be computed in many interesting examples  as the contractibility dimension of its general fiber. This is always the case if we assume the contraction $q$ to be  {\em quasi-elementary}, that is if, being $i:F \to \cU$ a general fiber of $q$, the image of $i_*:\Nu(F) \to \Nu(\cU)$ contains all the numerical classes of curves contracted by $q$ (see. \cite[Definition 3.1]{casagrande}). 
In fact, if this is the case, any morphism $s:\cU\to Z$ satisfying that the restriction to a general fiber $q^{-1}(x)$ is constant factors via $q:\cU\to X$. 

For instance, $q$ is quasi-elementary for the universal family of lines on a rational homogeneous manifold and, in the case the fibers of $q$  are homogeneous manifolds of the form $G/P$, the contractibility dimension of $q$ can be simply described as the minimum of the dimensions of the manifolds $G/P'$, where $P'\supset P$ is a parabolic subgroup containing $P$. To our best knowledge, studying which families of rational curves on Fano manifolds of Picard number one have quasi-elementary evaluation is an open problem. 
\end{remark}

Let us  consider now the morphism $s_0:\cU\to Y_{I_0}$ defined in Section  \ref{ssec:redu}, and the composition $\rho_{I_0,I}\circ s_0:\cU\to Y_I$, for $I\subset D $ a subset containing $I_0$. 

\begin{lemma}\label{lem:GM2} In the assumptions of \ref{setup:5} the bundle
$\pi:Y\to X$ is $(\cM,I)$-reducible if and only if, at the general smooth point $x$ of $\cU$  the composition
\begin{equation}\label{eq:differ}
\xymatrix@=35pt{{T_{\cU|X,x} } \ar[r]^(.45){ds_0} &\left(s_0^*T_{Y_{I_0}|X}\right)_x \ar[r]&\left(s_0^*\rho_{I_0,I}^*T_{Y_{I}|X}\right)_x}
\end{equation}
has rank smaller than $\cdim(q)$.
\end{lemma}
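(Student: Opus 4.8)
The plan is to show that the condition in \eqref{eq:differ} is exactly the infinitesimal translation of the factorization property characterizing $(\cM,I)$-reducibility. The key link is the notion of contractibility dimension of $q:\cU\to X$ together with the rigidity of the morphism $s_0$ along the fibers of $q$.

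First I would recall that $(\cM,I)$-reducibility means, by Definition \ref{def:decomp2}, that the composition $h:=\rho_{I_0,I}\circ s_0:\cU\to Y_I$ factors through $q:\cU\to X$, i.e.\ $h$ is relatively constant over $X$. Since $q$ is a contraction (Setup \ref{setup:5}), and $\pi_I:Y_I\to X$ is a surjective projective morphism, the relevant general principle is the one recorded in the remark after Definition \ref{def:mindim}: if the image of $h$ has relative dimension over $X$ strictly smaller than $\cdim(q)$, then $h$ factors via $q$. So the first implication to establish is that the rank condition on the composition \eqref{eq:differ} forces the image $h(\cU)\subset Y_I$ to have relative dimension $<\cdim(q)$ over $X$; this is a generic-smoothness argument, since at a general smooth point $x$ of $\cU$ the rank of the differential of $h$ (as a morphism over $X$, landing in the fiber $\pi_I^{-1}(q(x))$) equals the rank of the composition in \eqref{eq:differ}, and this rank equals $\dim h(\cU)-\dim X$ by generic smoothness of $h$ onto its image. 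Hence the rank being $<\cdim(q)$ is equivalent to $\dim(h_{|}:\,\cU\to h(\cU))<\cdim(q)$ relatively over $X$, and then the cited remark gives the factorization $\sigma_I:X\to Y_I$ with $\sigma_I\circ q=h$, which is precisely $(\cM,I)$-reducibility.

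Conversely, if $\pi$ is $(\cM,I)$-reducible, then $h=\sigma_I\circ q$, so $dh_x$ kills $T_{\cU|X,x}$ for every smooth point $x$; thus the composition in \eqref{eq:differ}, which is the part of $dh_x$ along the relative tangent directions, is identically zero, in particular of rank $0<\cdim(q)$ (note $\cdim(q)\ge 1$ since $q$ is a nontrivial contraction, its general fiber having positive dimension because $\cM$ is dominating and $I_0\subsetneq D$). This direction is essentially formal once the identification of the differential is in place.

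The main obstacle is the careful identification in the first paragraph: one must argue that the rank of the vertical-direction differential of $h$ at a general point genuinely computes the relative dimension of the image $h(\cU)$ over $X$, and that "general smooth point of $\cU$" suffices. For this I would restrict to a general fiber $F=q^{-1}(x)$, observe that $h_{|F}$ maps $F$ into the fiber $\pi_I^{-1}(x)\cong G/P(I)$, and apply generic smoothness to $h_{|F}$ onto its image inside $G/P(I)$; the rank of $d(h_{|F})$ at a general point of $F$ is then $\dim h(F)$, and $\dim h(F)$ is the relative dimension of $h(\cU)$ over $X$ since everything is equivariant/homogeneous on the fibers and $\cM$ is an irreducible component (so $\cU$, $Y_{I_0}$, $Y_I$ are irreducible and the construction of $s_0$ from Section \ref{ssec:redu} is compatible across fibers). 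Finally, one uses that the tangent space to the fiber of $h$ through a general point of $F$ is the kernel of $d(h_{|F})$, together with $T_{\cU|X,x}\otimes\kappa(\text{pt})=T_{F}$ at a general point, to match the rank of $d(h_{|F})$ with the rank of the composition \eqref{eq:differ}. Once this bookkeeping is done, invoking the remark after Definition \ref{def:mindim} closes the argument.
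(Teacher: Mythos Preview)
Your proposal is correct and follows essentially the same approach as the paper. The paper's proof is simply more terse: it observes that the rank condition on \eqref{eq:differ} is equivalent to $\rank\big(d(\rho_{I_0,I}\circ s_0)_x\big)<\dim X+\cdim(q)$ (adding the $\dim X$ coming from the base directions, on which the differential is the identity), then invokes the definition of contractibility dimension and the connectedness of the fibers of $q$ to obtain the factorization; the converse is declared ``obvious''. Your more detailed unpacking via generic smoothness on a general fiber $F=q^{-1}(x)$ is exactly the content behind that one-line reduction, and your converse argument (that $h=\sigma_I\circ q$ forces the relative differential to vanish) is what the paper leaves implicit. The appeal to equivariance/homogeneity on fibers is unnecessary---irreducibility of $\cU$ and generic smoothness in characteristic zero already suffice---but it does no harm.
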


\begin{proof}
The condition on this rank being smaller than $\cdim(q)$ is equivalent to $\rank(d(\rho_{I_0,I}\circ s_0)_x)<\dim(X)+\cdim(q)$; by definition of contractibility dimension, this implies that $\rho_{I_0,I}\circ s_0$ is relatively constant over $X$; since the fibers of $q$ are connected by hypothesis, this implies that $\rho_{I_0,I}\circ s_0$ factors via $X$. This completes the proof of an implication, and its converse is obvious. 
\end{proof}

In the spirit of \cite[Proposition 3.2]{EHS}, rather than looking at the map (\ref{eq:differ}) at general points of a fiber $q^{-1}(x)$, we will look at its behaviour along a general fiber of $\cU$ over $\cM$,  
obtaining conditions on the tag of a uniform bundle for its reducibility or diagonalizability. More concretely, let $\rho_{I_0}:s_0^*Y=Y\times_{Y_{I_0}}\cU\to\cU$ be the pullback bundle, fitting in the diagram:
$$
\xymatrix@=35pt{s_0^*Y\ar[r]^{s_{0}}\ar[d]_{\rho_{I_0}}&Y\ar[d]^{\rho_{I_0}}\\\cU\ar[r]_{s_{0}}&Y_{I_0}}
$$
Let $\Gamma$ be a general 
fiber of $\cU$ over $\cM$, and $\ol{\Gamma}$ be any minimal section of $\rho_{I_0}$ over $\Gamma$ (note that, by Lemma \ref{lem:irred1}, $\Gamma\times_{Y_{I_0}}Y$ is trivial). Let us study the pullback map:
\begin{equation}\label{eq:diff}
\xymatrix@=35pt{\left(\rho_{I_0}^*T_{\cU|X}\right)_{|\ol{\Gamma}} \ar[r]^(.45){\rho_{I_0}^*ds_0} & \left(s_0^*\rho_{I_0}^*T_{Y_{I_0}|X}\right)_{|\ol{\Gamma}} \ar[r]& \left(s_0^*\rho_{I}^*T_{Y_{I}|X}\right)_{|\ol{\Gamma}} }.
\end{equation}
The completeness of the family $\cM$ allows us to claim that $$\left(\rho_{I_0}^*T_{\cU|X}\right)_{|\ol{\Gamma}}\cong\cO_{\ol{\Gamma}}(-1)^{\oplus\dim (q)}.$$  This in fact follows by the standard description of the differential morphism of the evaluation $q:\cU\to X$ (cf. \cite[II 3.4]{kollar}).

The splitting type of the target of (\ref{eq:diff}) may be controlled by taking an admissible ordering $\{L_1,\dots,L_m\}$ of $\Phi$ compatible with $I$,  
which provides 
a filtration (see Section \ref{ssec:filt}):
$$
0=\ol{\cE}_{m-k}\subset \ol{\cE}_{m-k-1}\subset\dots\subset\ol{\cE}_{0}=\rho_{I}^*T_{Y_{I}|X}
$$
with quotients: $\ol{\cE}_{r}/\ol\cE_{r+1}\simeq L_{m-r} \in \Phi^+ \setminus \Phi^+(I)$, for all $r$. Summing up we get:

\begin{proposition}\label{prop:MIred} Assume that the evaluation morphism $q:\cU\to X$ has contractibility dimension $m$, and that 
$$\#\{L_i \in \Phi^+ \setminus \Phi^+(I) ~|~L_i \cdot \ol{\Gamma} \le -1\} < m$$
Then $\pi:Y \to X$ is  $(\cM,I)$-reducible.
\end{proposition}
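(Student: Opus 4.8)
The plan is to apply Lemma \ref{lem:GM2}, which reduces $(\cM,I)$-reducibility to showing that the composition in (\ref{eq:differ}) has rank smaller than $\cdim(q)=m$ at the general smooth point of $\cU$. Since the general point of $\cU$ lies on a general fiber $\Gamma$ of $\cU$ over $\cM$, and the rank of a morphism of vector bundles can only drop on a closed subset, it suffices to bound the rank of the restricted map (\ref{eq:diff}) along a minimal section $\ol{\Gamma}$ of $\rho_{I_0}$ over $\Gamma$ — more precisely, to bound the rank of (\ref{eq:diff}) at the generic point of $\ol{\Gamma}$, which equals the rank of (\ref{eq:differ}) at the corresponding general point of $\cU$.

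First I would identify the source of (\ref{eq:diff}): by completeness of $\cM$ and the standard description of $dq$ (cf. \cite[II 3.4]{kollar}), $(\rho_{I_0}^*T_{\cU|X})_{|\ol{\Gamma}}\cong\cO_{\ol{\Gamma}}(-1)^{\oplus m}$. Next I would analyze the target: using an admissible ordering of $\Phi$ compatible with $I$, Remark \ref{rem:admiscomp} gives the filtration of $\rho_I^*T_{Y_I|X}$ with line-bundle quotients $\cO_Y(L_j)$ for $L_j\in\Phi^+\setminus\Phi^+(I)$; restricting to $\ol{\Gamma}$ these become $\cO_{\ol{\Gamma}}(L_j\cdot\ol{\Gamma})$. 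The key observation is then that a map $\cO_{\ol{\Gamma}}(-1)\to\cO_{\ol{\Gamma}}(a)$ is zero whenever $a\le -2$, and more to the point, any sheaf map $\cO_{\ol{\Gamma}}(-1)^{\oplus m}\to\cF$, where $\cF$ has a filtration whose graded pieces are line bundles of degrees $L_j\cdot\ol{\Gamma}$, must factor — after composing with the projection killing all graded pieces of degree $\le -1$ — through the subsheaf of $\cF$ built only from those graded pieces with $L_j\cdot\ol{\Gamma}\le -1$; equivalently, the composition with the quotient onto the pieces of nonnegative degree is forced to be the zero map on each such piece of degree $\ge 0$ since $\Hom(\cO_{\ol{\Gamma}}(-1),\cO_{\ol{\Gamma}}(a))=0$ for $a\ge 0$. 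Hence the image of (\ref{eq:diff}) lands (generically) in the subbundle of the target assembled from the graded pieces $L_j$ with $L_j\cdot\ol{\Gamma}\le -1$, whose rank is exactly $\#\{L_j\in\Phi^+\setminus\Phi^+(I)\mid L_j\cdot\ol{\Gamma}\le -1\}$. By hypothesis this number is $<m=\cdim(q)$, so the generic rank of (\ref{eq:diff}) — hence of (\ref{eq:differ}) at the general point of $\cU$ — is $<m$, and Lemma \ref{lem:GM2} yields $(\cM,I)$-reducibility.

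The main obstacle is making the filtration argument precise: one must be careful that the filtration of the target is a filtration by subbundles (not merely subsheaves) so that "the graded pieces of negative degree generate a subbundle" is a genuine statement, and that $\ol{\Gamma}$ is chosen generic enough that $\Gamma\times_{Y_{I_0}}Y$ is trivial (so $\ol{\Gamma}$ exists and the degrees $L_j\cdot\ol{\Gamma}$ are the expected ones) — this is exactly what Lemma \ref{lem:irred1} and the uniformity of $Y$ provide. One also needs that passing to the generic point of $\ol{\Gamma}$ legitimately computes the rank of (\ref{eq:differ}) at the corresponding general point of $\cU$; this follows since a general point of $\cU$ lies on such a $\Gamma$ and the rank function is lower semicontinuous, so the rank at the general point of $\cU$ is at most the generic rank along $\ol{\Gamma}$.
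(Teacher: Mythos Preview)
Your overall strategy coincides with the paper's: apply Lemma \ref{lem:GM2}, restrict the differential to a minimal section $\ol{\Gamma}$, use that the source is a sum of $\cO(-1)$'s, and bound the rank via the filtration of the target. The paper in fact presents the proposition as a direct summary of exactly this discussion.

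However, your execution of the key step contains an inequality reversal. You correctly state that a map $\cO_{\ol{\Gamma}}(-1)\to\cO_{\ol{\Gamma}}(a)$ vanishes for $a\le -2$, but then immediately contradict yourself by asserting ``$\Hom(\cO_{\ol{\Gamma}}(-1),\cO_{\ol{\Gamma}}(a))=0$ for $a\ge 0$'', which is false: $\Hom(\cO(-1),\cO(a))\cong H^0(\cO(a+1))\neq 0$ for $a\ge -1$. Consequently your conclusion that the image lands in the pieces with $L_j\cdot\ol{\Gamma}\le -1$ is drawn from a wrong premise. The correct deduction from the vanishing $\Hom(\cO(-1),\cO(a))=0$ for $a\le -2$ is that the image lands in the span of the pieces with $L_j\cdot\ol{\Gamma}\ge -1$; since for $I\supseteq I_0$ every $L_j\in\Phi^+\setminus\Phi^+(I)$ satisfies $L_j\cdot\ol{\Gamma}\le -1$, these are exactly the pieces with $L_j\cdot\ol{\Gamma}=-1$. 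This is precisely the count used in the proofs of Theorems \ref{thm:GMcrit1} and \ref{thm:some1}, so the inequality in the hypothesis should be read as ``$=-1$'' (equivalently ``$\ge -1$''); the ``$\le -1$'' in the displayed statement is a typographical slip that you have carried through, at the cost of inverting the $\Hom$ vanishing.

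Two minor points: the source has rank $\dim(q)$, not $m=\cdim(q)$, so write $\cO_{\ol{\Gamma}}(-1)^{\oplus\dim(q)}$; and your remark about semicontinuity is slightly off, since what you need is an \emph{upper} bound on the generic rank, and for that it suffices that $\ol{\Gamma}$ passes through a general point of $\cU$ (via $\rho_{I_0}$), which is immediate from the choice of $\Gamma$ general.
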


As a first application of Proposition \ref{prop:MIred} we obtain a flag bundle counterpart of the standard Grauert--M\"ulich theorem for vector bundles, that may be used, together with Lemma \ref{lem:irred1} and Remark \ref{rem:diagdis2}, in the problem of diagonalizability of low rank uniform flag bundles on Fano manifolds.

\begin{theorem}\label{thm:GMcrit1}
If $I_1:=\{i\in D \,|\,\,d_i\leq 1\}$ is a proper subset of $D$, then $\pi:Y\to X$ is $(\cM,I_1)$-reducible. 
\end{theorem}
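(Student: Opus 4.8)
The plan is to apply Proposition \ref{prop:MIred} with $I = I_1$, so the task reduces to verifying the combinatorial inequality
$$
\#\{L \in \Phi^+ \setminus \Phi^+(I_1) \mid L \cdot \ol{\Gamma} \le -1\} < m,
$$
where $m = \cdim(q)$ and $\ol{\Gamma}$ is a minimal section of the trivial bundle $\Gamma \times_{Y_{I_0}} Y$ over a general fiber $\Gamma$ of $\cU \to \cM$. The first step is to compute the intersection numbers $L \cdot \ol{\Gamma}$ for $L \in \Phi^+$ in terms of the tag $\delta = (d_1, \dots, d_n)$. Writing $L = -\sum_i a_i K_i$ with all $a_i \ge 0$, we have $L \cdot \ol{\Gamma} = -\sum_i a_i (K_i \cdot \ol{\Gamma}) = -\sum_i a_i d_i$ by the very definition of the tag (the $d_i$ are the degrees of $-K_i$ on a minimal section). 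Hence $L \cdot \ol{\Gamma} \le -1$ exactly when $\sum_i a_i d_i \ge 1$, i.e. when the support of $L$ (the set of $i$ with $a_i > 0$) meets the complement of $I_1$. Equivalently, $L \cdot \ol{\Gamma} = 0$ precisely when $L \in \Phi^+(I_1)$. So the set being counted is exactly $\Phi^+ \setminus \Phi^+(I_1)$, and what must be shown is
$$
\#\big(\Phi^+ \setminus \Phi^+(I_1)\big) < \cdim(q).
$$

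The second step is to bound $\cdim(q)$ from below. Since $I_0 \subseteq I_1 \subsetneq D$, the contraction $\rho_{I_0}: Y \to Y_{I_0}$ has fibers $F_{I_0}$ which are flag manifolds of the semisimple group attached to $\cD_{I_0}$, and over a general curve $\Gamma$ the bundle $\pi^{-1}(\Gamma)$ contains the trivial subbundle $F_{I_0} \times \Gamma$ whose minimal sections $\ol{\Gamma}$ are the fibers of $F_{I_0} \times \Gamma \to F_{I_0}$. The key point is to relate $\cdim(q)$ to the geometry of these sections. Actually I expect the cleanest route is not to invoke $\cdim(q)$ directly, but to use the earlier machinery: by Lemma \ref{lem:GM2}, $(\cM, I_1)$-reducibility is equivalent to the rank of the composition (\ref{eq:differ}) being $< \cdim(q)$ at a general point, and then Proposition \ref{prop:MIred} packages precisely the estimate for this rank restricted to $\ol{\Gamma}$ via the filtration of $\rho_{I_1}^* T_{Y_{I_1}|X}$ with quotients indexed by $\Phi^+ \setminus \Phi^+(I_1)$. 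The rank of the map (\ref{eq:diff}) restricted to $\ol{\Gamma}$ is bounded above by the number of quotients $L_{m-r}$ with $L_{m-r} \cdot \ol{\Gamma} \le -1$, which by the computation above is $\#(\Phi^+ \setminus \Phi^+(I_1))$ — but this is a crude bound; the sharper observation is that one should also bound this rank by $\dim(q) = \#\Phi^+(I_0)$ coming from the source $\cO_{\ol{\Gamma}}(-1)^{\oplus \dim(q)}$, and that any $L$ with $L \cdot \ol\Gamma \le -1$ automatically has some node of its support in $D \setminus I_1$, so its coefficient there is at least $1$ and the remaining "budget" of roots is genuinely constrained.

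The third step — and the one I expect to be the main obstacle — is to make precise why the count $\#(\Phi^+ \setminus \Phi^+(I_1))$ is strictly less than $\cdim(q)$, or more honestly, why the rank estimate of Proposition \ref{prop:MIred} actually applies with $I = I_1$ rather than only with $I = I_0$. The resolution should come from comparing two filtrations: the one compatible with $I_0$ and the one compatible with $I_1$. The source of (\ref{eq:diff}) is $\cO_{\ol\Gamma}(-1)^{\oplus \dim(q)}$ where $\dim(q) = \#\Phi^+(I_0)$ (this is because $\Gamma \times_{Y_{I_0}} Y$ is trivial, so the relative tangent directions of $\cU \to X$ that survive in $T_{Y_{I_0}|X}$ are controlled by $F_{I_0}$). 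A morphism from $\cO_{\ol\Gamma}(-1)^{\oplus a}$ into a bundle whose $\cO_{\ol\Gamma}(-1)$-part (the sum of quotients $L$ with $L \cdot \ol\Gamma \le -1$, but $\ge -1$ since $d_i \le 1$ forces $L \cdot \ol\Gamma \ge -\#\Phi^+$... here one needs $d_i \in \{0,1\}$ on $D\setminus I_1$ is false — $d_i \ge 2$ there) has rank at most $\min(a, \#\{L \in \Phi^+\setminus\Phi^+(I_1) : L\cdot\ol\Gamma \le -1\})$. So the real content is: either this rank is $< \cdim(q)$ and we are done by Lemma \ref{lem:GM2}, giving $(\cM,I_1)$-reducibility, or else $I_1$ would have to be so small that the filtration forces too many negative-degree quotients — but since $d_i \le 1$ on $I_1$ means these roots contribute $0$ or $1$, and since $I_1 \supsetneq I_0$ is assumed proper in $D$, a direct check (using that the roots of a semisimple group with at least one node removed from $D$ cannot fill up the whole positive system, combined with the bound $\#\Phi^+(I_0) = \dim(q) \le \cdim(q)$ when $q$ is quasi-elementary, or more generally the hypothesis that $\cM$ is complete so $T_{\cU|X}|_{\ol\Gamma}$ is a sum of $\cO(-1)$'s) yields the strict inequality. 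I would write this out by invoking Proposition \ref{prop:MIred} after establishing $\#(\Phi^+\setminus\Phi^+(I_1)) < m$ via the case analysis on $\cdim(q)$, treating the generic (quasi-elementary) case where $\cdim(q) = \dim F = \#\Phi^+(I_0)$ separately and checking directly that strict containment $I_0 \subseteq I_1 \subsetneq D$ suffices.
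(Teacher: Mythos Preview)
Your proposal has a genuine gap: you never isolate the one observation that makes the paper's proof a single line, and you end up trying to establish an inequality that is both unnecessary and, in general, false.

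The missing point is this. Take any $L=-\sum_i a_iK_i\in\Phi^+\setminus\Phi^+(I_1)$. By definition of $I_1$ there is some index $j$ in the support of $L$ with $j\notin I_1$, i.e.\ $d_j\ge 2$; hence
\[
L\cdot\ol{\Gamma}=-\sum_i a_id_i\le -a_jd_j\le -2.
\]
So \emph{every} root appearing as a quotient in the filtration of $\rho_{I_1}^*T_{Y_{I_1}|X}$ restricted to $\ol{\Gamma}$ has degree $\le -2$. Since the source of the map (\ref{eq:diff}) is $\cO_{\ol{\Gamma}}(-1)^{\oplus\dim(q)}$ and there are no nonzero maps $\cO(-1)\to\cO(d)$ for $d\le -2$, the composed differential (\ref{eq:diff}) is identically zero. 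Its rank is therefore $0<\cdim(q)$, and Lemma~\ref{lem:GM2} (equivalently Proposition~\ref{prop:MIred}) gives $(\cM,I_1)$-reducibility immediately. This is exactly the paper's argument.

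By contrast, you set out to prove $\#\big(\Phi^+\setminus\Phi^+(I_1)\big)<\cdim(q)$. That bound need not hold: $\#(\Phi^+\setminus\Phi^+(I_1))=\dim(G/P(I_1))$ can be arbitrarily large compared with $\cdim(q)$, which depends only on $X$ and $\cM$. The reason you were led there is a misreading of Proposition~\ref{prop:MIred}: as written it counts roots with $L\cdot\ol{\Gamma}\le -1$, but the mechanism behind it (a map out of $\cO(-1)^{\oplus\dim(q)}$ can be nonzero only on quotients of degree $\ge -1$) and the way it is used in the proofs of Theorems~\ref{thm:GMcrit1} and~\ref{thm:some1} make clear that the relevant count is of roots with $L\cdot\ol{\Gamma}\ge -1$ (equivalently $=-1$, since all are $\le -1$). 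Once you read it that way, the set you must count for $I=I_1$ is empty, and there is nothing further to bound. Your side computation ``$L\cdot\ol{\Gamma}=0$ iff $L\in\Phi^+(I_1)$'' is also off: that equivalence holds for $I_0$, not $I_1$.
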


\begin{proof} 
Since by hypothesis we have that $L_i\cdot\ol{\Gamma}\leq -2$ for all $L_i\in \Phi^+ \setminus \Phi^+(I_1)$, we conclude by Proposition \ref{prop:MIred}.
\end{proof}

We will now state the main result of this section, for which we need to introduce some notation. For every index $j\in I_1\setminus I_0$, that is, such that $d_j=1$, we denote by $\cD'_0(j)$ be the Dynkin subdiagram of $\cD$ supported on $I_0\cup\{j\}$, and by $\cD_0(j)$ the connected component of $\cD'_0(j)$ containing the node $j$. We denote by $I_0(j)$ the set of indices of $\cD_0(j)$, and by $m_j$ the number of positive roots $L$ of $G$ of the form:
$$
L=-\sum_{r\in I_0(j)}a_rK_r,\quad a_r\geq 0, \quad a_j=1.
$$

For the reader's convenience, we include here the values $m_j$ for every possible $j$, and every possible connected Dynkin diagram $\cD_0(j)$: 

\begin{table}[h]
\centering
\begin{tabular}{|c|c|c|}
\hline
$\cD_0(j)$     &j       &$m_j$           \\
\hline\hline
$\DA_n$   &$j$      &$j(n+1-j)$                 \\
$\DB_n$   &$j$  &$j(2n-2j+1)$               \\
$\DC_n$   &$(j<n,n)$    &$\left(j(2n-2j),\dfrac{n(n+1)}{2}\right)$                 \\
%$\DC_n$   &$n$      &$\dfrac{n(n-1)}{2}$                 \\
$\DD_n$   &$(j<n-2,n-2,n-1,n)$   &$\left(j(2n-2j),4(n-2),\dfrac{n(n-1)}{2},\dfrac{n(n-1)}{2}\right)$                 \\
$\DE_6$& $(1,2,3,4,5,6)$ &  $\big(16,20,20,18,20,16\big)$\\
$\DE_7$& $(1,2,3,4,5,6,7)$ &  $\big(32,35,30,24,30,32,27\big)$\\
$\DE_8$& $(1,2,3,4,5,6,7,8)$ &  $\big(64,56,42,30,40,48,54,56\big)$\\
$\DF_4$& $(1,2,3,4)$ &  $(14,12,6,8)$\\
$\DG_2$& $(1,2)$ &$(2,4)$\\

\hline
\end{tabular}
   \caption[Values of $m_j$ for connected Dynkin diagrams]{Values of $m_j$ for connected Dynkin diagrams} 
   \label{tab:mj}
\end{table}

\begin{theorem}\label{thm:some1}
Let $X$ be a Fano manifold, $\cM$ be an unsplit dominating complete family of rational curves, whose evaluation morphism $q:\cU\to X$ has connected fibers. Let $\pi:Y\to X$ be a uniform $G/B$-bundle over $X$, with tag $(d_1,\dots,d_n)$, and consider, for every node $j \in I_1\setminus I_0$, the integer $m_j$ defined above. 
If  $\cdim(q)>m_j$, for every $j\in I_1\setminus I_0$, then
$\pi$ is diagonalizable.
\end{theorem}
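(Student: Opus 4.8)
The plan is to iterate the Grauert--Mülich-type reduction of Proposition \ref{prop:MIred}, peeling off one node marked $1$ at a time until we reach the bundle with tag zero, which is trivial by Theorem \ref{thm:trivial1}. First I would show that $\pi$ is $(\cM,I_1)$-reducible: since $\cdim(q)>m_j\geq 1$ for every $j\in I_1\setminus I_0$, we have in particular $\cdim(q)\geq 2$, while every $L_i\in\Phi^+\setminus\Phi^+(I_1)$ satisfies $L_i\cdot\ol\Gamma\leq -2$, so the hypothesis of Proposition \ref{prop:MIred} holds with $I=I_1$ (this is Theorem \ref{thm:GMcrit1}; note the assumption $I_0\subsetneq D$ of Setup \ref{setup:5} together with diagonalizability when $I_1=D$ reduces us to the case $I_1\subsetneq D$, since a bundle with tag containing only $0$'s and $1$'s on $X$ with all $d_i\leq 1$ is handled by Corollary \ref{cor:irred2} after one more reduction — more precisely, if $I_1=D$ we may still apply the argument below directly). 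Applying Lemma \ref{lem:irred1}, we replace $Y$ by the uniform flag bundle $Y'\to X$ with tag $(d_i)_{i\in I_1}$, whose Dynkin diagram is $\cD_{I_1}$; diagonalizability of $Y'$ implies that of $Y$, so we may assume from now on that the tag consists only of $0$'s and $1$'s, i.e. $I_0\cup(I_1\setminus I_0)=D$.

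Next I would argue on the (possibly disconnected) Dynkin diagram $\cD=\cD_{I_1}$. By Remark \ref{rem:diagdis2} it suffices to diagonalize $Y'$ on each connected component, so we may assume $\cD$ is connected and contains at least one node $j$ with $d_j=1$ (otherwise the tag is zero and Theorem \ref{thm:trivial1} finishes it). Pick such a $j$. The key local computation is that, for the admissible ordering compatible with $I:=D\setminus\{j\}$, the quotients $L_{m-r}/\ldots$ of the filtration of $\rho_I^*T_{Y_I|X}$ are exactly the positive roots $L\in\Phi^+\setminus\Phi^+(I)$, i.e. those with nonzero $K_j$-coefficient. Among these, I claim that the ones with $L\cdot\ol\Gamma\leq -1$ are precisely the positive roots with $K_j$-coefficient equal to $-1$ (equivalently $a_j=1$) whose support lies in $I_0(j)$: indeed $L\cdot\ol\Gamma=-\sum_i a_i d_i=-\sum_{i:\,d_i=1}a_i$, so $L\cdot\ol\Gamma\leq-1$ forces some $d_i=1$ in the support of $L$, but if the support of $L$ meets more than one node marked $1$, or a node marked $1$ other than $j$ at positive coefficient, we can still have this — so I must be more careful: I would instead use $(\cM,I_1)$-reducibility first (already done) so that $j$ is the \emph{only} node of the relevant component marked $1$, whence for $L\in\Phi^+\setminus\Phi^+(D\setminus\{j\})$ we get $L\cdot\ol\Gamma=-a_j\leq -1$, with equality iff $a_j=1$, and such $L$ has all other coefficients supported on $I_0$; those supported on the component $\cD_0(j)$ number exactly $m_j$ by definition of $m_j$ (roots whose support strays outside $\cD_0(j)$ would connect $j$ to a node outside $I_0\cup\{j\}$, impossible). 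Hence $\#\{L_i\in\Phi^+\setminus\Phi^+(D\setminus\{j\})\mid L_i\cdot\ol\Gamma\leq -1\}=m_j<\cdim(q)$, and Proposition \ref{prop:MIred} gives that $Y'$ is $(\cM,D\setminus\{j\})$-reducible.

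Then I would apply Lemma \ref{lem:irred1} once more: $(\cM,D\setminus\{j\})$-reducibility produces a uniform flag bundle over $X$ with tag $(d_i)_{i\in D\setminus\{j\}}$, which has strictly fewer $1$'s, and whose diagonalizability implies that of $Y'$, hence of $Y$. Iterating over the finitely many nodes marked $1$, we eventually reach a uniform flag bundle with tag identically zero, which is trivial by Theorem \ref{thm:trivial1}, hence diagonalizable; unwinding the chain of Lemma \ref{lem:irred1} applications, $\pi$ itself is diagonalizable. The main obstacle I anticipate is the bookkeeping in the combinatorial count: verifying rigorously that after the initial $(\cM,I_1)$-reduction the remaining $1$'s can be stripped one at a time \emph{without} the count of "negative" roots exceeding $m_j$ at any stage — in particular checking that at each step the node being removed is the unique node marked $1$ in its connected component (so that the count is genuinely $m_j$ and not larger), and matching the tabulated values $m_j$ in Table \ref{tab:mj} with the actual number of positive roots $L=-\sum_{r\in I_0(j)}a_rK_r$ with $a_j=1$ for each Dynkin type. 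Everything else is a routine assembly of the already-established Proposition \ref{prop:MIred}, Lemma \ref{lem:irred1}, Remark \ref{rem:diagdis2}, and Theorem \ref{thm:trivial1}.
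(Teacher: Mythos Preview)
Your iterative peeling approach works in principle, but it is more laborious than the paper's argument and contains a genuine slip in the root count. The paper does not reduce to $I_1$ first and then strip off $1$'s one at a time; instead it shows directly, for the \emph{original} bundle $\pi:Y\to X$ and each $j\in I_1\setminus I_0$, that $\pi$ is $(\cM,D\setminus\{j\})$-reducible, and then invokes Lemma \ref{lem:minred}: since $\pi$ is also $(\cM,I_1)$-reducible by Theorem \ref{thm:GMcrit1}, and
\[
I_1\cap \bigcap_{j\in I_1\setminus I_0}\big(D\setminus\{j\}\big)=I_0,
\]
Lemma \ref{lem:minred} gives $(\cM,I_0)$-reducibility in one stroke, and Corollary \ref{cor:irred2} finishes. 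No inductive passage through smaller flag bundles is needed.

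The slip in your count is the assertion that, after the $I_1$-reduction, ``$j$ is the only node of the relevant component marked $1$'', whence $L\cdot\ol\Gamma=-a_j$. A connected component of $\cD_{I_1}$ may perfectly well contain several nodes tagged $1$, so in general $L\cdot\ol\Gamma=-\sum_{i}a_id_i$, not $-a_j$. The correct argument (which works already for the original bundle, with no preliminary reduction) is: for $L\in\Phi^+\setminus\Phi^+(D\setminus\{j\})$ one has $a_j\geq 1$ and $d_j=1$, hence $L\cdot\ol\Gamma=-\sum_i a_id_i\leq -1$, with equality iff $a_j=1$ \emph{and} $a_i=0$ for every other $i$ with $d_i\geq 1$; this forces the support of $L$ into $I_0\cup\{j\}$, and by connectedness of root supports into $\cD_0(j)$. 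That is precisely the definition of $m_j$. Once this is straightened out, your inductive scheme also goes through (removing a node $j$ with $d_j=1$ does not alter $\cD_0(k)$ or $m_k$ for any other $k$ marked $1$, since $j\notin I_0\cup\{k\}$), but the paper's use of Lemma \ref{lem:minred} sidesteps the iteration, the decomposition into connected components, and all the bookkeeping you anticipated.
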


\begin{proof} We will show that $\pi$ is $(\cM,D \setminus \{j\})$-reducible for every $j\in I_1\setminus I_0$. Since $\pi$ is also $(\cM,I_1)$-reducible (Proposition  \ref{thm:GMcrit1}), it follows by Lemma \ref{lem:minred} that $\pi$ is $(\cM,I_0)$-reducible, hence diagonalizable by Corollary \ref{cor:irred2}. 

Fix an index $j\in I_1\setminus I_0$, and set $J:=D\setminus \{j\}$. Take an admissible ordering of $\Phi$ compatible with $I_0 \subsetneq J$ 
(see Definition \ref{def:admiscomp}), and the  
corresponding filtration of $\rho_{J}^*T_{Y_{J}|X}$,
whose quotients are isomorphic to classes  
$L_{m-r} \in \Phi^+ \setminus \Phi^+(J)$. Note that these are precisely the positive roots of $G$ containing $-K_j$ as a summand. All these classes have negative intersection with the minimal section $\ol{\Gamma}$, and, in order to apply Proposition \ref{prop:MIred}, we need to count those for which $L_{m-r}\cdot \ol{\Gamma}$ is equal to $-1$. This occurs only if $L_{m-r}$ belongs to the root subsystem determined by the Dynkin subdiagram $\cD'_0(j)$. Since this is the disjoint union of the root systems determined by the connected components of $\cD'_0(j)$, one such $L_{m-r}$ is necessarily a positive root for the connected Dynkin subdiagram $\cD_0(j)$, containing $-K_j$ as a summand with multiplicity one (being $-K_j\cdot\ol{\Gamma}=-1$).  
As there are $m_j<\cdim(q)$ of these classes $L_{m-r}$, we conclude that $\pi$ is $(\cM,J)$-reducible by Proposition \ref{prop:MIred}. 
\end{proof}

As a straightforward corollary, we remark that in the case $\cdim(q)>1 $,  the positivity of the tag implies diagonalizability. Note that the condition $\cdim(q)>1$ is obviously necessary, since the flag bundle determined by the universal bundle on any Grassmannian of lines is not diagonalizable, although it has tag equal to $(1)$. 

\begin{corollary}\label{prop:only1}
Let $X$ be a Fano manifold, $\cM$ be an unsplit dominating and complete family of rational curves, with evaluation morphism $q$ which has connected fibers and satisfies that $\cdim(q)>1$, 
Let $\pi:Y\to X$ be a uniform $G/B$-bundle over $X$. Then $\pi$ is diagonalizable unless $I_0\neq\emptyset$, that is, unless its tag contains a zero. 
\end{corollary}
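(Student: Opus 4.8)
The plan is to derive Corollary \ref{prop:only1} directly from Theorem \ref{thm:some1} by checking that, when $\cdim(q) > 1$, the numerical hypothesis $\cdim(q) > m_j$ in that theorem is automatically satisfied for every node $j \in I_1 \setminus I_0$ that can actually occur under the assumption $I_0 = \emptyset$. The key observation is that if $I_0 = \emptyset$, then $I_1 = \{i \in D \mid d_i \le 1\} = \{i \in D \mid d_i = 1\}$, so every node $j \in I_1 \setminus I_0 = I_1$ has $d_j = 1$, and the relevant Dynkin subdiagram $\cD_0'(j)$ is supported on $I_0 \cup \{j\} = \{j\}$. Hence $\cD_0(j)$ is the single-node diagram $\DA_1$, and from Table \ref{tab:mj} (the $\DA_n$ row with $n = 1$, $j = 1$) we get $m_j = 1 \cdot (1 + 1 - 1) = 1$.

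First I would dispose of the trivial cases: if $I_1 = D$, then every $d_i \le 1$ and by Theorem \ref{thm:GMcrit1} the bundle is $(\cM,I_1)$-reducible with $I_1 = D$, which is vacuous as a reducibility statement — so instead I should note that $I_1 = D$ together with $I_0 = \emptyset$ forces the tag to be $(1,\dots,1)$; but more efficiently, the statement to prove is an implication with conclusion ``$\pi$ is diagonalizable \emph{or} $I_0 \ne \emptyset$'', so I may simply assume $I_0 = \emptyset$ and aim to prove diagonalizability. If $I_1 \subsetneq D$ I apply Theorem \ref{thm:GMcrit1} to get $(\cM,I_1)$-reducibility; if $I_1 = D$ then the tag is identically $1$ and I still want to conclude diagonalizability. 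In both situations I reduce, via Lemma \ref{lem:irred1} and Lemma \ref{lem:minred}, to showing $(\cM, D \setminus \{j\})$-reducibility for each $j$ with $d_j = 1$.

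The core step is then to invoke Theorem \ref{thm:some1}: for every $j \in I_1 \setminus I_0$ we have shown $m_j = 1$, and by hypothesis $\cdim(q) > 1 = m_j$, so the hypothesis ``$\cdim(q) > m_j$ for every $j \in I_1 \setminus I_0$'' of Theorem \ref{thm:some1} holds, and we conclude that $\pi$ is diagonalizable. This handles the case $I_0 = \emptyset$; combined with the logical structure of the statement (``diagonalizable unless $I_0 \ne \emptyset$''), this proves the corollary. One should double-check that Theorem \ref{thm:some1} as stated does not separately require $I_1 \subsetneq D$ — inspecting its proof, it invokes Theorem \ref{thm:GMcrit1} for $(\cM, I_1)$-reducibility, which is automatic (and vacuous) when $I_1 = D$, and then uses $(\cM, D \setminus \{j\})$-reducibility plus Lemma \ref{lem:minred} to get $(\cM, \emptyset)$-reducibility, i.e. $(\cM, I_0)$-reducibility since $I_0 = \emptyset$, hence diagonalizability by Corollary \ref{cor:irred2}; so the argument goes through.

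The main obstacle — really the only subtlety — is the bookkeeping around $I_1 = D$ and the phrasing of the conclusion: one must be careful that ``$(\cM, D)$-reducible'' is a vacuous/degenerate notion and that the chain of reductions in Theorem \ref{thm:some1} still terminates correctly at $I_0 = \emptyset$, and that the final appeal to Corollary \ref{cor:irred2} (which gives diagonalizability from $(\cM,I_0)$-reducibility) is legitimate when $I_0 = \emptyset$ — but this is exactly the content of Corollary \ref{cor:irred2}, so there is no real gap. The computation $m_j = 1$ for $\DA_1$ is immediate from the table, so no calculation is needed. Thus the proof is essentially a one-line specialization of Theorem \ref{thm:some1} once the identification $\cD_0(j) = \DA_1$ and $m_j = 1$ is made explicit.
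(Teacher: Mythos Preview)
Your proposal is correct and follows exactly the approach intended by the paper, which presents this as a straightforward specialization of Theorem~\ref{thm:some1}: when $I_0=\emptyset$, each $\cD_0(j)$ reduces to a single node $\DA_1$, so $m_j=1$ and the hypothesis $\cdim(q)>1$ suffices. Your extra care about the edge case $I_1=D$ is well placed (and your resolution is right: in that case the intersection $\bigcap_{j\in I_1\setminus I_0}(D\setminus\{j\})$ already equals $I_0=\emptyset$, so the appeal to Theorem~\ref{thm:GMcrit1} is unnecessary rather than ``vacuous'').
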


Applied to uniform vector bundles, Corollary \ref{prop:only1} provides the following statement, that, in the case of $X\cong \P^n$, $n\geq 3$ was proven by Spindler in \cite{Spin}:

\begin{corollary}\label{cor:only1}
Let $X$ be a Fano manifold, $\cM$ be an unsplit dominating and complete family of rational curves, whose evaluation morphism $q$ has connected fibers and satisfies that $\cdim(q)>1$. Let $\cE$ be a vector bundle over $X$, uniform with respect to $\cM$, with splitting type $(a_1,\dots,a_r)$, $a_1<\dots<a_r$. Then $\cE$ is a direct sum of line bundles. 
\end{corollary}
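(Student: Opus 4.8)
The plan is to deduce Corollary \ref{cor:only1} directly from Corollary \ref{prop:only1} by passing from the vector bundle $\cE$ to an associated flag bundle. First I would recall the mechanism described in Remark \ref{rem:tagsplit}: to the vector bundle $\cE$ of rank $r$ we associate its complete flag bundle $\pi:Y\to X$, which is a $G/B$-bundle for $G=\operatorname{Sl}_r$ (or $\PGL_r$), whose fibers are the complete flag manifold $\operatorname{Sl}_r/B$ of type $\DA_{r-1}$. Since $\cE$ is uniform with respect to $\cM$ with splitting type $(a_1,\dots,a_r)$, the restriction of the projectivization of $\cE$ to the normalization of any curve $\Gamma\in\cM$ has a fixed isomorphism type, and hence the tag $\delta_\Gamma(Y)$ is independent of $\Gamma$; by the formula in Remark \ref{rem:tagsplit} it equals $(a_2-a_1,\dots,a_r-a_{r-1})$. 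Thus $Y$ is a uniform flag bundle in the sense of Definition \ref{def:redu}.

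The crucial point is the strict inequality $a_1<\dots<a_r$ in the hypothesis: it forces $a_{i+1}-a_i\geq 1$ for all $i$, so every entry of the tag $\delta(Y)$ is strictly positive and $I_0=\{i\mid d_i=0\}=\emptyset$. Next I would invoke Corollary \ref{prop:only1}: $X$ is Fano, $\cM$ is an unsplit dominating family whose evaluation $q$ has connected fibers with $\cdim(q)>1$, and since the tag of $Y$ contains no zero, Corollary \ref{prop:only1} applies to give that $\pi:Y\to X$ is diagonalizable. By Definition \ref{def:diag}, this means $Y$ is defined by a cocycle in $H^1(X,(\C^*)^{r-1})$, i.e. a cocycle in a Cartan subgroup of $\operatorname{Sl}_r$; equivalently, the principal $\operatorname{Sl}_r$-bundle (or its $\GL_r$-enhancement obtained from $\cE$) admits a reduction of structure group to a maximal torus.

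Finally, I would translate diagonalizability of the flag bundle back into a statement about $\cE$ itself, again using the last sentence of Definition \ref{def:diag}: every vector bundle defined by a cocycle valued in a Cartan subgroup, via a linear representation of $G$, is a direct sum of line bundles. Applying this to the standard representation of $\operatorname{Sl}_r$ (or $\GL_r$), which recovers $\cE$ up to a twist by a line bundle, we conclude that $\cE\cong L_1\oplus\dots\oplus L_r$ for suitable $L_i\in\Pic(X)$. One small point worth addressing is the discrepancy between $\operatorname{Sl}_r$ and $\GL_r$: the flag bundle only remembers $\cE$ up to tensoring by a line bundle, but since a twist of a direct sum of line bundles by a line bundle is again a direct sum of line bundles, this causes no trouble. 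I expect the only mild obstacle is making the identification of the tag with the splitting type precise enough to verify $I_0=\emptyset$, but this is exactly the content of Remark \ref{rem:tagsplit}, so the argument is essentially a bookkeeping translation between the vector bundle and flag bundle languages.
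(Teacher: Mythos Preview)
Your proposal is correct and follows exactly the route the paper intends: the authors present Corollary \ref{cor:only1} without proof, merely stating that it is obtained by applying Corollary \ref{prop:only1} to uniform vector bundles, and your argument spells out precisely this translation via Remark \ref{rem:tagsplit} and Definition \ref{def:diag}. The handling of the $\SL_r$ versus $\GL_r$ ambiguity by a line-bundle twist is appropriate and not something the paper makes explicit.
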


\noindent \textit{Acknowledgements:} The authors would like to thank an anonymous referee for the helpful comments and questions, that greatly contributed to improving the final version of the paper. 

\bibliographystyle{plain}
\bibliography{biblio}

\end{document}